\documentclass[a4paper,12pt,reqno]{amsart}

\usepackage{amsmath,amssymb,amsthm}
\usepackage{latexsym}
\usepackage{color}
\usepackage{graphicx}
\usepackage{mathrsfs}
\usepackage{enumerate}
\usepackage[abbrev]{amsrefs}
\usepackage[T1]{fontenc}
\usepackage{mathtools}
\mathtoolsset{showonlyrefs=true}

\setlength{\topmargin}{-5mm}
\setlength{\oddsidemargin}{5mm}
\setlength{\evensidemargin}{5mm}
\setlength{\textwidth}{150mm}
\setlength{\textheight}{240mm}
\allowdisplaybreaks[4]

\theoremstyle{plain}
\newtheorem{thm}{Theorem}[section]
\newtheorem{lemm}[thm]{Lemma}
\newtheorem{prop}[thm]{Proposition}
\newtheorem{cor}[thm]{Corollary}

\theoremstyle{definition}
\newtheorem{df}[thm]{Definition}
\newtheorem{rem}[thm]{Remark}

\makeatletter

\@addtoreset{equation}{section}
\makeatother

\renewcommand{\div}{\operatorname{div}}
\newcommand{\dB}{\dot{B}}
\newcommand{\rank}{\operatorname{rank}}
\newcommand{\supp}{\operatorname{supp}}

\newcommand{\tb}{\widetilde{b}}
\newcommand{\tv}{\widetilde{v}}

\newcommand{\tf}{\widetilde{f}}
\newcommand{\tg}{\widetilde{g}}

\newcommand{\xih}{\xi_{\rm h}}
\renewcommand{\leq}{\leqslant}

\begin{document}
\title[The compressible Navier--Stokes--Coriolis system]
{Compressible Navier--Stokes--Coriolis system \\ in critical Besov spaces}
\author[M.~Fujii]{Mikihiro Fujii}
\address[M.~Fujii]{Graduate School of Science, Nagoya City University, Nagoya, 467-8501, Japan}
\email[Corresponding author]{fujii.mikihiro@nsc.nagoya-cu.ac.jp}
\author[K.~Watanabe]{Keiichi Watanabe}
\address[K.~Watanabe]{{School of General and Management Studies, Suwa University of Science, 5000-1, Toyohira, Chino, Nagano 391-0292, Japan}}
\email{{watanabe\_keiichi@rs.sus.ac.jp}}
\keywords{the Navier--Stokes--Coriolis system, long time solutions, critical Besov spaces, Strichartz estimates}
\subjclass[2020]{35Q35, 76N06, 76U05}
\begin{abstract}
We consider the three{-}dimensional compressible Navier{--}Stokes system with the Coriolis force and prove the long-time existence of a unique strong solution.
More precisely, we show that
for any $0<T<\infty$ and
arbitrary large initial data in the scaling critical Besov spaces,
the solution uniquely exists on $[0,T]$ provided that the speed of rotation is high and the Mach numbers are low enough.
To the best of our knowledge, this paper is the first contribution to the well-posedness of the \textit{compressible} Navier{--}Stokes system with the Coriolis force in the whole space $\mathbb R^3$.
The key ingredient of our analysis is to establish the dispersive linear estimates despite a quite complicated structure of the linearized equation due to the anisotropy of the Coriolis force.
\end{abstract}
\maketitle

\section{Introduction}\label{sec:intro}
In this paper, we consider the Cauchy problem of the compressible Navier{--}Stokes system with the
Coriolis force on {the whole space} $\mathbb R^3$:
\begin{align}\label{eq:NSC-1}
	\begin{dcases}
		\partial_t \rho + \div ( \rho u ) = 0, 
		& 
		t > 0, x \in \mathbb{R}^3,\\
		\begin{aligned}
			\rho \left( \partial_t u + ( u \cdot \nabla ) u + \Omega ( e_3 \times u ) \right)
			&
			+
			\frac{1}{\varepsilon^2} \nabla P ( \rho ) \\
			={}
			&
			\mu \Delta u + ( \mu + \mu' ) \nabla \div u,
		\end{aligned}
		&
		t > 0, x \in \mathbb{R}^3,\\
		\rho(0,x) = \rho_0(x), \quad u(0,x) = u_0(x),
		&
		x \in \mathbb{R}^3.
	\end{dcases}
\end{align}
Here, {$\rho=\rho(t,x):(0,\infty)\times \mathbb{R}^3 \to (0,\infty)$} and {$u=u(t,x):(0,\infty)\times \mathbb{R}^3 \to \mathbb{R}^3$} stand for the unknown density and velocity of the fluid, respectively, while {$\rho_0=\rho_0(x):\mathbb{R}^3 \to (0,\infty)$} and {$u_0=u_0(x):\mathbb{R}^3 \to \mathbb{R}^3$} are given initial data.
The shear and bulk viscosity coefficients are constants and denoted by $\mu$ and $\mu'$, respectively{,} and we assume that $\mu$ and $\mu'$ {satisfy} the ellipticity conditions:
$\mu > 0$ and $\nu : = 2 \mu + \mu' > 0$.
The constants $\Omega \in \mathbb{R}$ and $\varepsilon > 0$ are the speed of rotation and Mach numbers, respectively,
{and the term $\Omega (e_3 \times (\rho u))$}, where $e_3 = (0,0,1)^\top$, represents the Coriolis force on the flow due to rotation of the fluid.
In this paper, we seek a solution $(\rho, u)$ around the constant equilibrium {state} $(\rho_\infty, 0)$, where $\rho_\infty$ is a positive constant.
The pressure $P$ is a suitably smooth function of $\rho$ satisfying $c_\infty := P' (\rho_\infty) > 0$.
The compressible Navier{--}Stokes{--}Coriolis system~\eqref{eq:NSC-1} is known as one of the models of geophysical flows and is physically well{-}justified at mid-latitude regions, where the centrifugal force is assumed to balance with the geostrophic force.
Concerning further physical backgrounds, we refer to~\cite{Cu-Ro-Be-11} and references therein.


The aim of this paper is to show the long-time existence of a unique solution in the scaling critical framework.
More precisely, for any {\it finite} time $0< T < \infty$ and arbitrarily large initial perturbations around the constant equilibrium state in the critical Besov spaces,
we prove that there exists a unique strong solution of~\eqref{eq:NSC-1} on $[0,T]$,
provided that the speed of rotation is high and the Mach number is low enough.
To the best of our knowledge, this paper is the first contribution to the analysis of the dispersive effect on \textit{strong} solutions of the compressible Navier{--}Stokes{--}Coriolis system on the whole space.


Before describing our results precisely, let us recall previous works related to our study briefly.
If the density $\rho$ is given as a positive constant (say $\rho \equiv 1$), System~\eqref{eq:NSC-1} reduces to the incompressible Navier{--}Stokes{--}Coriolis system:
\begin{align}\label{eq:incomp-coriolis}
	\begin{dcases}
		\partial_t u - \mu \Delta u + \Omega( e_3 \times u ) + ( u \cdot \nabla) u + \nabla p = 0, & \qquad t > 0, x \in \mathbb{R}^3,\\
		\div u = 0, & \qquad t \geqslant 0, x \in \mathbb{R}^3,\\
		u(0,x) = u_0(x), & \qquad x \in \mathbb{R}^3.
	\end{dcases}
\end{align}
Mathematical analysis for the incompressible rotating fluid was begun by Babin, Mahalov, and Nicolaenko \cites{Ba-Ma-Ni-97,Ba-Ma-Ni-00,Ba-Ma-Ni-01}.
According to their studies, the Coriolis term presents the dispersive phenomenon.
This is due to the fact that the linear solution of \eqref{eq:incomp-coriolis} is given by 
\begin{align}\label{lin:incomp}
	T_{\Omega}(t)u_0
	=
	e^{\mu t \Delta}
	e^{i\Omega t \frac{D_3}{|D|}}
	P_+u_0
	+
	e^{\mu t \Delta}
	e^{-i\Omega t \frac{D_3}{|D|}}
	P_-u_0,
\end{align}
where $P_{\pm}:=(1/2)(1\pm (iD/|D|)\times)$ (see \cite{Hi-Sh-10} for the derivation of \eqref{lin:incomp}).
Iwabuchi and Takada \cites{Iw-Ta-13,Iw-Ta-15} and Koh, Lee, and Takada \cite{Ko-Le-Ta-14-1} focused on the dispersive effect of the evolution group $\{e^{\pm i\Omega tD_3/|D|}\}_{t \in \mathbb{R}}$, generated by the Coriolis force, and obtained the Strichartz estimate for the linear solution:
\begin{align}\label{incomp:str}
	\| T_{\Omega}(t)u_0 \|_{L^{\theta}(0,\infty;L^p)}
	\leqslant
	C
	|\Omega|^{-\{\frac{1}{\theta}-(\frac{3}{4}-\frac{3}{2p})\}}
	\| u_0 \|_{L^2}
\end{align}
for $2<p<6$ and 
$3/4 - 3/(2p) \leqslant 1/\theta < \min \{ 1/2, 5/4 - 5/(2p) \}$.
We note that the power of $|\Omega|$ in \eqref{incomp:str} is negative, which implies the linear solution of \eqref{eq:incomp-coriolis} may be small for sufficiently large speed $|\Omega| \gg 1$ of the rotation.
Iwabuchi and Takada \cite{Iw-Ta-15} made use of this fact to the nonlinear problem and proved the long time solvability of~\eqref{eq:incomp-coriolis}.
More precisely, it was shown in \cite{Iw-Ta-15} that for any $u_0 \in \dot{H}^s(\mathbb R^3)^3$ ($1/2 < s < 5/4$) {with $\div u_0=0$} and $0 < T < \infty$, there exists a positive constant $\Omega_T=\Omega_T(s,u_0)$ such that \eqref{eq:incomp-coriolis} possesses a unique solution to \eqref{eq:incomp-coriolis} for all $\Omega \in \mathbb{R} \setminus \{0 \}$ with $|\Omega|\geqslant \Omega_T$.
Iwabuchi and Takada~\cite{Iw-Ta-13} and Koh, Lee, and Takada {\cite{Ko-Le-Ta-14-1}} improved the result of \cite{Iw-Ta-15} and proved the global well-posedness for large initial data. 
More specifically, it {was} shown in {\cites{Iw-Ta-13,Ko-Le-Ta-14-1}} that for \textit{arbitrarily} $u_0 \in \dot{H}^s(\mathbb R^3)^3$ ($1/2 \leqslant s < 9/10$) {with $\div u_0=0$}, there exists a positive constant $\Omega_0=\Omega_0(s,u_0)$ such that a unique global-in-time solution to \eqref{eq:incomp-coriolis} exists for all $\Omega \in \mathbb{R} \setminus \{0 \}$ with $|\Omega| \geqslant \Omega_0$.
In the aforementioned studies, to obtain the long time or global solvability results for \eqref{eq:incomp-coriolis}, the smallness condition on the initial data is replaced by the largeness condition on $\lvert \Omega \rvert$.
Concerning other related topics, see {Dutrifoy \cite{Du-05} and Koh, Lee, and Takada \cite{Ko-Le-Ta-14-2} for the long time existence of solutions to \eqref{eq:incomp-coriolis} with the inviscid case $\mu=0$, } \cites{Ch-De-Ga-Gr-02,OT} for the fast rotation limit results as $|\Omega| \to \infty$, \cites{Gi-In-Ma-Sa-08,Hi-Sh-10,Iw-Ta-14} for {the} uniform global well-posedness of \eqref{eq:incomp-coriolis}, and {\cites{Iw-Ta-12,Ko-Le-Ta-14-1,Iw-Ma-Ta-16} } for the time periodic problem.
We also refer to a series of papers by Charve \cites{Ch-04,Ch-05,Ch-06,Ch-08}, who investigated some dispersive properties arising in the primitive system with the Coriolis force. \par
In contrast to the incompressible case,
there are few results on the global well-posedness for the compressible Navier{--}Stokes{--}Coriolis system \eqref{eq:NSC-1}.
In the context of weak solutions, the existence and the fast rotation limit for \eqref{eq:NSC-1} {have} been studied by several authors.
Supposing that the Rossby number $\mathrm{Ro}=1/\Omega$ and Mach number $\mathrm{Ma}$ satisfy $\mathrm{Ro}=\mathrm{Ma}=\epsilon$ and that the complete slip boundary conditions on the boundary $\partial \mathcal D$, it was first shown in \cite{Fe-Ga-No-12} that the asymptotic dynamics of the compressible Navier{--}Stokes{--}Coriolis system in an infinite slab $\mathcal D = \mathbb R^2 \times (0, 1)$ as $\epsilon\to+0$ is described by a quasi-goestrophic type equation:
\begin{equation}
	\partial_t (\Delta_{{\mathrm{h}}} r - r) + \nabla_{{\mathrm{h}}}^\perp r \cdot \nabla_{{\mathrm{h}}} (\Delta_{{\mathrm{h}}} r) = \mu \Delta_{{\mathrm{h}}}^2 r
\end{equation}
in terms of global-in-time weak solutions, where we have set
\begin{equation}
	\nabla_{{\mathrm{h}}} r = (\partial_{x_1} r, \partial_{x_2} r, 0), \qquad \nabla_{{\mathrm{h}}}^\perp r = (\partial_{x_2} r, - \partial_{x_1} r, 0), \qquad \Delta_{{\mathrm{h}}} r = \partial_{x_1}^2 r + \partial_{x_2}^2 r.
\end{equation}
Notice that, the perfect slip boundary conditions on $\partial \mathcal D$ are crucial in their studies, since these avoid the effect of a boundary layer (Ekman layer);
{we also refer to \cite{Bo-Fa-Pr-22} for the case with the Ekman boundary layers.}
In a similar setting, further general multi-scale limits were considered in \cites{Fa-21,Fe-Ga-Ge-No-12,Fe-Ga-No-12,Fe-No-14}.
Here, general multi-scale limits mean that the Rossby and Mach numbers are assumed to be given by
\begin{equation}
	\mathrm{Ro} = \epsilon \qquad \text{and} \qquad \mathrm{Ma} = \epsilon^\theta, \qquad (\theta \geqslant 0).
\end{equation}
More precisely, the cases $\theta \gg 1$ and $\theta = 1$ were investigated by \cites{Fe-Ga-Ge-No-12,Fe-No-14} and \cite{Fe-Ga-No-12}, respectively. Recently, the remaining case $0 \leqslant \theta < 1$ was considered by Fanelli~\cite{Fa-21}.
According to their studies, if $\theta \ne 1$, then either the weak compressibility or the high rotation dominates the other, but this is not the case if $\theta = 1$.
Finally, we refer to the {work due to Ngo and Scrobogna \cite{Ng-Sc-18}}, who proved the long time existence of solutions to the compressible Euler-Coriolis system in $\mathbb{R}^3$, that is,~\eqref{eq:NSC-1} with $\mu = \lambda = 0$. \par
{
	We next focus on {the results for} the strong solutions of {the (non-rotational)} compressible Navier{--}Stokes system:
\begin{align}\label{eq:comp-NS}
	\begin{dcases}
		\partial_t \rho + \div ( \rho u ) = 0, 
		& 
		t > 0, x \in \mathbb{R}^3,\\
		\begin{aligned}
			\rho \left( \partial_t u + ( u \cdot \nabla ) u \right)
			&
			+
		      \nabla P ( \rho ) \\
			={}
			&
			\mu \Delta u + ( \mu + \mu' ) \nabla \div u,
		\end{aligned}
		&
		t > 0, x \in \mathbb{R}^3,\\
		\rho(0,x) = \rho_0(x), \quad u(0,x) = u_0(x),
		&
		x \in \mathbb{R}^3.
	\end{dcases}
\end{align}
    If $(\rho,u)$ is a solution to \eqref{eq:comp-NS}, then the pair of the scaled functions 
\begin{align}\label{scaling}
	\rho^{\lambda}(t,x):=\rho(\lambda^2 t, \lambda x),\qquad
	u^{\lambda}(t,x):=\lambda u(\lambda^2 t, \lambda x)
\end{align}
also solves \eqref{eq:comp-NS} for all $\lambda > 0$, up to a change of the pressure law $P$ into $P_{\ell}:=\ell^2 P$.
	Since it holds that 
	\begin{align}
		\| \rho^{\lambda}(0,\cdot)-\rho_{\infty} \|_{\dot{B}_{p,q}^{\frac{3}{p}}} = \| \rho(0,\cdot)-\rho_{\infty} \|_{\dot{B}_{p,q}^{\frac{3}{p}}}
		\quad \text{and} \quad 
		\| u^{\lambda}(0,\cdot) \|_{\dot{B}_{p,q}^{\frac{3}{p}-1}} = \| u(0,\cdot) \|_{\dot{B}_{p,q}^{\frac{3}{p}-1}}
	\end{align}
    for all dyadic numbers $\lambda > 0$, we see that
	the product of the homogeneous Besov spaces $\dot{B}_{p,q}^{\frac{3}{p}}(\mathbb{R}^3) \times \dot{B}_{p,q}^{\frac{3}{p}-1}(\mathbb{R}^3)^3$ is {a critical} space for the scaling \eqref{scaling}.
	It was Danchin who first proved the well-posedness in the critical space.
	In \cite{Da-00}, he proved the local well-posedness in $\dot{B}_{2,1}^{\frac{3}{2}}(\mathbb{R}^3) \times \dot{B}_{2,1}^{\frac{1}{2}}(\mathbb{R}^3)^3$ and the global well-posedness in $(\dot{B}_{2,1}^{\frac{1}{2}}(\mathbb{R}^3) \cap \dot{B}_{2,1}^{\frac{3}{2}}(\mathbb{R}^3)) \times \dot{B}_{2,1}^{\frac{1}{2}}(\mathbb{R}^3)^3$.
	The local well-posedness {result} was improved by {Chen, Miao, and Zhang} \cite{Ch-Mi-Zh-10} and Danchin \cite{Da-14}.  {Indeed, they} obtained the local well-posedness of \eqref{eq:comp-NS} in $\dot{B}_{p,1}^{\frac{3}{p}}(\mathbb{R}^3) \times \dot{B}_{p,1}^{\frac{3}{p}-1}(\mathbb{R}^3)^3$ for $1\leqslant p <6$. {Here,} the condition $1 \leqslant p <6$ is optimal since the system \eqref{eq:comp-NS} is ill-posed for the case $p\geqslant 6$, which was shown by {Chen, Miao, and Zhang}~\cite{Ch-Mi-Zh-15} and {Iwabuchi and Ogawa} \cite{Iw-Og-22}.
	By~\cites{Ch-Da-10,Ha-11}, the {global well-posedness} result due to {\cite{Da-00}} {was} extended to the $L^p${-}based Besov spaces in the high frequency part.
	In contrast to the well-posedness of~\eqref{eq:comp-NS}, there does not seem to exist any result for the well-posedness of the Navier{--}Stokes{--}Coriolis system~\eqref{eq:NSC-1}.
	The aim of this paper is to construct a unique long time solution in the scaling critical setting $(\dot{B}_{2,1}^{\frac{1}{2}}(\mathbb{R}^3) \cap \dot{B}_{2,1}^{\frac{3}{2}}(\mathbb{R}^3)) \times \dot{B}_{2,1}^{\frac{1}{2}}(\mathbb{R}^3)^3$.} 
    
    
    This paper aims to focus on the dispersive effect of the rotation and the acoustic waves arising in the compressible Navier{--}Stokes{--}Coriolis system \eqref{eq:NSC-1}.
    In particular, we study three{-}dimensional flows in the whole space and prove that some space-time norm of the linear solution may be sufficiently small for \textit{arbitrarily} large size of the initial data provided that the speed of rotation is high and the Mach number is small enough, which is developed by the Strichartz type estimate of the linear solution. As a by-product, we construct a unique \textit{long-time} solution in the critical $L^2$ framework with $1 \ll |\Omega| \leqslant 1/\varepsilon$.
    More precisely, we prove that for a given initial perturbation in $( \dB_{2,1}^{\frac{1}{2}}(\mathbb R^3) \cap \dB_{2,1}^{\frac{3}{2}}(\mathbb R^3) ) \times \dB_{2,1}^{\frac{1}{2}}(\mathbb R^3)^3$ and any {\it finite} time $0 < T < \infty$, there exists a large constant $\Omega_T \gg 1$, depending on the initial data and $T$, such that if $\Omega_T \leqslant |\Omega| \leqslant 1/\varepsilon$, then \eqref{eq:NSC-1} admits a unique solution on $[0,T]$.
    {Concerning relevant studies, the global existence of regular solutions to the two-dimensional compressible Navier{--}Stokes system for a class of large initial data
    follows provided that the Mach number is sufficiently small \cites{Fu-22,Ha-Lo-98} or that the volume viscosity coefficient $\mu'$ is sufficiently large \cites{Ch-Zh-19,Da-Mu-17}.
    In contrast to the aforementioned studies, to the best of the authors' knowledge, this paper is the first contribution to the existence of long time solutions to the compressible Navier{--}Stokes system \textit{with} the Coriolis force in the whole space $\mathbb R^3$ provided that the speed of rotation is high and the Mach number is small enough, where there is no restriction on the size of initial data.}    
  
\subsection*{Main results}
    {To} state our main result precisely, we reformulate the problem.
    In order to simplify the representation, we perform a suitable rescaling in \eqref{eq:NSC-1} so as to reduce to the case $\nu = \rho_\infty = c_\infty = 1$. More precisely, if $(\rho,u)$ is the solution to~\eqref{eq:NSC-1}, then the pair of the scaled functions
    \begin{align}
	    {\widetilde \rho (t, x) = \frac1{\rho_\infty} \rho \bigg(\frac{\nu t}{\rho_\infty c_{\infty} } , \frac{\nu x}{\rho_\infty \sqrt{c_{\infty}}}\bigg), \quad
	    \widetilde u (t, x) = \frac{1}{\sqrt{c_{\infty}}} u \bigg(\frac{\nu t}{\rho_\infty c_{\infty} } , \frac{\nu x}{\rho_\infty \sqrt{c_{\infty}}}\bigg)}
    \end{align}
    solves 
    \begin{align}\label{eq:NSC-1-scaled}
        \begin{dcases}
            \partial_t \widetilde \rho + \div ( \widetilde \rho \widetilde  u ) = 0, 
            & 
            t > 0, x \in \mathbb{R}^3,\\
            \begin{aligned}
                \widetilde \rho \left( \partial_t \widetilde u + ( \widetilde u \cdot \nabla ) \widetilde u + \widetilde  \Omega ( e_3 \times \widetilde  u ) \right)
                &
                +
                \frac{1}{\varepsilon^2} \nabla \widetilde  P ( \widetilde  \rho ) \\
                ={}
                &
                \widetilde \mu \Delta \widetilde  u + ( \widetilde  \mu + \widetilde{\mu'} ) \nabla \div \widetilde  u,
            \end{aligned}
            &
            t > 0, x \in \mathbb{R}^3,
        \end{dcases}
    \end{align}
    where 
    $\widetilde{\Omega}$,
    $\widetilde{\mu}$,
    $\widetilde{\mu'}$, and
    $\widetilde{P}(\widetilde{\rho})$
    are defined by
    \begin{align}
        \widetilde{\Omega}:=\frac{\nu}{\rho_{\infty}c_{\infty}} \Omega,\quad
        \widetilde{\mu}:=\frac{\mu}{\nu},\quad
        \widetilde{\mu'}:=\frac{\mu'}{\nu},\quad 
        \widetilde{P}(\widetilde{\rho}):= \frac{1}{\rho_{\infty}c_{\infty}}P(\rho_{\infty}\widetilde{\rho}).
    \end{align}
    Note that $\widetilde{\nu}:=2\widetilde{\mu}+\widetilde{\mu'}=1$ and $\widetilde{P}'(1)=1$.
    Hence, omitting the script $\sim$, we shall consider the problem \eqref{eq:NSC-1} with 
    {\[\nu=\rho_{\infty}=c_{\infty}=1.\]}  
    Let $a$ be the density perturbation defined by
    \begin{align}
        a(t,x) : = \frac{\rho(t,x) - 1}{\varepsilon}
    \end{align}
	with
	\begin{align}
		 a(0,x) = a_0(x) : = \frac{\rho_0(x) - 1}{\varepsilon}.
	\end{align}
    Then, there holds
    \begin{align}\label{eq:NSC-2}
        \begin{dcases}
            \partial_t a + \frac{1}{\varepsilon} \div u = - \div ( a u ), 
            & 
            t > 0, x \in \mathbb{R}^3,\\
            \partial_t u 
            -
            \mathcal{L} u
            + 
            \Omega ( e_3 \times u ) 
            +
            \frac{1}{\varepsilon} \nabla a 
            =
            - N_{\varepsilon}[a,u],
            &
            t > 0, x \in \mathbb{R}^3,\\
            a(0,x) = a_0(x), \quad u(0,x) = u_0(x),
            &
            x \in \mathbb{R}^3,
        \end{dcases}
    \end{align}
    where we have put $\mathcal{L}u : = \mu \Delta u + ( \mu + \mu') \nabla \div u$ with $\mu>0$ and $2\mu+\mu'=1$, and
    \begin{gather}
        N_{\varepsilon}[a,u]
        :=
        ( u \cdot \nabla ) u
        +
        J( \varepsilon a ) \mathcal{L}u
        +
        \frac{1}{\varepsilon}
        K( \varepsilon a )
        \nabla a,\\
        J(a)
        :=
        \frac{a}{1+a},\qquad
        K(a)
        :=
        \frac{P'(1+a)}{1+a}-1.
    \end{gather}
    Our goal is to prove the unique existence of the long time solution to \eqref{eq:NSC-2} for large initial data in the scaling critical Besov space{s} provided that $1 \ll |\Omega| \leqslant 1/\varepsilon$. 
    The following theorem is our main result.
    \begin{thm}\label{thm}
        Let $0 < T < \infty$ and 
        \begin{align}
            a_0 \in \dB_{2,1}^{\frac{1}{2}}(\mathbb{R}^3) \cap \dB_{2,1}^{\frac{3}{2}}(\mathbb{R}^3),\qquad 
            u_0 \in \dB_{2,1}^{\frac{1}{2}}(\mathbb{R}^3)^3.
        \end{align}
        Then, there exists a positive constant $\Omega_{T}$ depending on $T$, $\mu$, $P$, $a_0$, and $u_0$ such that 
        if $\Omega \in \mathbb R \setminus \{0 \}$ and $0 < \varepsilon \leqslant 1$ satisfy
        \begin{align}
        \label{cond-omega-epsilon}
            \Omega_T \leqslant |\Omega| \leqslant  \frac{1}{\varepsilon} ,
        \end{align}
        then
        the equation \eqref{eq:NSC-2} possesses a unique solution $(a,u)$ on {$[0,T]$} in the class 
        \begin{align}
            &
            a \in {C} ( [0,T] ; \dB_{2,1}^{\frac{1}{2}}(\mathbb{R}^3) \cap \dB_{2,1}^{\frac{3}{2}}(\mathbb{R}^3) ),\\
            &
            u \in {C} ( [0,T] ; \dB_{2,1}^{\frac{1}{2}} (\mathbb{R}^3) )^3 \cap L^1 ( 0,T ; \dB_{2,1}^{\frac{5}{2}}(\mathbb{R}^3) )^3
        \end{align}
        with $\rho = 1 +\varepsilon a >0$ on $[0,T] \times \mathbb{R}^3$.
    \end{thm}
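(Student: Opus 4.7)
The plan is to split the unknown as $(a,u) = (a_L, u_L) + (\bar a, \bar u)$, where $(a_L, u_L)$ solves the linearization of \eqref{eq:NSC-2} with data $(a_0, u_0)$ and the remainder $(\bar a, \bar u)$ starts from zero. A Strichartz-type estimate for the linear propagator, carrying an explicit negative power of $|\Omega|$, then makes $(a_L, u_L)$ small in a suitable space-time norm even when $(a_0, u_0)$ is arbitrarily large, provided $|\Omega|$ is enlarged enough depending on $T$ and on the data. The perturbation $(\bar a, \bar u)$ is then built by a contraction argument following Danchin in the critical Besov framework on $[0,T]$.

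\emph{Linear analysis.} Take the Fourier transform of the linearization of \eqref{eq:NSC-2}. The result is a $4\times 4$ matrix symbol in which the acoustic block, of characteristic speed $1/\varepsilon$, is coupled to the Coriolis block, of characteristic frequency $\Omega\xi_3/|\xi|$. Neither the Helmholtz projection onto solenoidal nor the one onto potential fields decouples $a$ from $u$, because $\Omega(e_3\times\cdot)$ is not Helmholtz-invariant; thus one must diagonalize the whole matrix. Thanks to the viscous term $\mathcal{L}$ the eigenvalues take the form $-\mu|\xi|^2 + i\phi_j(\xi;\Omega,\varepsilon)$, and under the regime $|\Omega|\leqslant 1/\varepsilon$ one expects to separate the acoustic branch $\pm|\xi|/\varepsilon$ from the inertial branch $\pm\Omega\xi_3/|\xi|$ up to remainders controlled by the small parameter $|\Omega|\varepsilon\leqslant 1$. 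A stationary-phase argument on dyadic frequency blocks, combined with the parabolic smoothing produced by $e^{-\mu t|\xi|^2}$, should then yield a Strichartz inequality of the form $\|(a_L, u_L)\|_{L^\theta(0,T;L^p)} \leqslant C |\Omega|^{-\sigma}\|(a_0, u_0)\|_{(\dB_{2,1}^{1/2}\cap\dB_{2,1}^{3/2})\times \dB_{2,1}^{1/2}}$ for suitable admissible exponents $\theta,p$ and some $\sigma>0$.

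\emph{Nonlinear closure.} Substituting the decomposition into \eqref{eq:NSC-2}, the remainder satisfies the same linear system with source built from $(a_L+\bar a, u_L+\bar u)$: a transport term $-\div((a_L+\bar a)(u_L+\bar u))$ in the continuity equation and $-N_\varepsilon[a_L+\bar a, u_L+\bar u]$ in the momentum equation. Using Danchin-type hyperbolic–parabolic estimates for the Lamé–Coriolis semigroup in $(\dB_{2,1}^{1/2}\cap\dB_{2,1}^{3/2})\times(\dB_{2,1}^{1/2}\cap L^1_T\dB_{2,1}^{5/2})$, paired with product, composition, and commutator laws in critical Besov spaces, all nonlinear terms that contain a factor of $(a_L, u_L)$ are made small by the Strichartz bound and Hölder in time via the $|\Omega|^{-\sigma}$ factor, while the purely quadratic remainder terms close inside a solution ball whose radius is chosen a posteriori in function of $T$ and $(a_0, u_0)$. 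Uniqueness follows from the same scheme applied to the difference of two solutions, and the positivity $\rho = 1+\varepsilon a > 0$ is ensured by the embedding $\dB_{2,1}^{3/2}\hookrightarrow L^\infty$ together with the smallness of $\varepsilon\|a\|_{L^\infty_T\dB_{2,1}^{3/2}}$.

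\emph{Expected obstacle.} The chief difficulty is the linear dispersive estimate itself: unlike the incompressible rotating case, whose single phase $\Omega\xi_3/|\xi|$ is by now well understood, the symbol here carries two coupled anisotropic phases whose Hessian becomes degenerate both near $\xi_3=0$ and across the surface where the acoustic and inertial speeds balance, so a careful frequency decomposition in both $|\xi|$ and $|\xi_3|$ (and possibly of the angle between $\xi$ and $e_3$) is needed to extract the full dispersion rate. A secondary delicate point is that the transport equation for $\bar a$ closes only if $\nabla u$ is controlled in $L^1_T L^\infty$, which forces the Strichartz estimate to be performed at a regularity compatible with the $\dB_{2,1}^{3/2}$-bound on $a$ and in turn dictates the admissible range of $\theta, p, \sigma$.
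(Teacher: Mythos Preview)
Your high-level strategy---use the dispersion of the linear propagator to absorb arbitrarily large data once $|\Omega|$ is large---matches the paper, but the linear analysis you outline has a genuine gap. You assert that the eigenvalues of the full viscous $4\times 4$ symbol take the form $-\mu|\xi|^2 + i\phi_j(\xi;\Omega,\varepsilon)$; this is false. The Lam\'e operator $\mathcal{L}$ dissipates solenoidal and potential modes at different rates ($\mu|\xi|^2$ versus $\nu|\xi|^2$), and the Coriolis term mixes these, so the real parts are not simply $-\mu|\xi|^2$. The characteristic polynomial is the quartic \eqref{egn-ply}, whose roots admit no useful explicit form and whose anisotropy in $\xi_3$ obstructs even an asymptotic expansion. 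The paper's way around this is the crucial missing idea: drop the viscosity and diagonalize the \emph{inviscid} symbol, whose four eigenvalues $\pm i\lambda^\pm(\xi)$ are explicit (after rescaling $\Omega,\varepsilon$ out). Stationary phase on the phases $\lambda^\pm$ then gives a clean $L^1\!\to\! L^\infty$ decay (Lemma~\ref{lemm:disp}), hence Strichartz for the inviscid group; the viscous solution is recovered by writing $\mathcal{L}u$ as a source and bounding $\|\Delta_j\mathcal{L}u\|_{L^1_t L^2}$ via the energy/maximal-regularity Lemmas~\ref{lemm:simple-ene}--\ref{lemm:middle-ene}. Your ``expected obstacle'' about Hessian degeneracy is real and handled in the paper (including the delicate $j=0$ block where $\lambda^\pm$ fails to be $C^1$), but it is not the main one.

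On the nonlinear side, your decomposition $(a,u)=(a_L,u_L)+(\bar a,\bar u)$ with a contraction for $(\bar a,\bar u)$ is a different architecture from the paper's. The paper does not split the solution; it runs a continuation argument on the full $(a,u)$ using two coupled norms, an energy norm $E_\varepsilon(t)$ (not small, controlled only by $\langle T\rangle\|(a_0,u_0)\|$) and an auxiliary Strichartz-type norm $A^{q,r}_{\varepsilon,\alpha}(t)$ (made small by dispersion). The nonlinear estimates are arranged so that every bilinear term picks up at least one small factor from $A^{q,r}_{\varepsilon,\alpha}$, and the two a~priori bounds close simultaneously. Your scheme could in principle work, but you would still need a maximal-regularity estimate for the Lam\'e--Coriolis semigroup in $L^1_T\dB_{2,1}^{5/2}$ to propagate $(\bar a,\bar u)$; the paper explains that such an estimate uniform in $\Omega$ is precisely what is unavailable in the low frequencies (this is why the result is long-time, not global), and it compensates by accepting a $\langle\Omega^2\varepsilon^2 T\rangle$ loss there (Corollary~\ref{cor:low-ene}). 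You should be aware that this loss must be tracked and eventually absorbed by the smallness of $\varepsilon$ and $|\Omega|^{-1}$.
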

    \begin{rem}
    We provide some remarks on Theorem \ref{thm}.
    \begin{enumerate}
        \item 
        The relation $|\Omega| \leq 1/\varepsilon$ assumed in \eqref{cond-omega-epsilon} is a technical one due to the low-frequency estimates for the linear solutions; see  \eqref{lin-str-visc-3} for instance.
        \item 
        In contrast to the local well-posedness theory by \cites{Da-01-L,Da-14},
        it is additionally required to assume $a_0 \in \dB_{2,1}^{\frac{1}{2}}(\mathbb{R}^3)$ since we need to obtain the global a priori estimate for the medium frequency part of the linear solutions; see Lemma \ref{lemm:middle-ene}.
    \end{enumerate}
    \end{rem}
    Let us make some comments on the proof of Theorem \ref{thm}. To construct the long time solution,
    the dispersive analysis due to the rotation and acoustic wave plays a crucial role.
    In the incompressible case, the explicit formula \eqref{lin:incomp} for the linear solution yields the Strichartz type estimate \eqref{incomp:str}, where the proof is based on the stationary phase method for the dispersive evolution group $\{e^{i\Omega t D_3/|D|} \}_{t \in \mathbb{R}}$ together with $TT^*$ argument.
    However, the situation becomes more complicated in the compressible case.
    Indeed, consider the linearized system
    \begin{align}\label{eq:lin-0}
    	\begin{dcases}
    		\partial_t a + \frac{1}{\varepsilon} \div u = f, 
    		& 
    		t > 0, x \in \mathbb{R}^3,\\
    		\partial_t u 
    		-
    		\mathcal{L} u
    		+ 
    		\Omega ( e_3 \times u ) 
    		+
    		\frac{1}{\varepsilon} \nabla a 
    		=
    		g,
    		&
    		t > 0, x \in \mathbb{R}^3,\\
    		a(0,x) = a_0(x), \quad u(0,x) = u_0(x),
    		&
    		x \in \mathbb{R}^3,
    	\end{dcases}
    \end{align}
    where {$a_0=a_0(x):\mathbb{R}^3 \to \mathbb{R}$} and {$u_0=u_0(x):\mathbb{R}^3 \to \mathbb{R}^3$} are given initial data and
    {$f=f(t,x):(0,\infty)\times \mathbb{R}^3 \to \mathbb{R}$} and {$g=g(t,x):(0,\infty)\times \mathbb{R}^3 \to \mathbb{R}^3$} are given external forces.
    Then the eigen polynomial for \eqref{eq:lin-0}
    is given by
    \begin{align}\label{egn-ply}
        \begin{split}
        \lambda^4
        &
        - (4\mu + \mu') |\xi|^2 \lambda^3
        + \left\{\frac{|\xi|^2}{\varepsilon^2} + \mu (5\mu + 2\mu') |\xi|^4 + \Omega^2 \right\} \lambda^2\\
        &
        - \left\{\frac{2\mu}{\varepsilon^2}|\xi|^4 + \mu^2 |\xi|^6 + \Omega^2\mu|\xi|^2 + \Omega^2(\mu + \mu') \xi_3^2 \right\} \lambda
        + \frac{\mu^2}{\varepsilon^2}|\xi|^6 + \frac{\Omega^2}{\varepsilon^2}\xi_3^2 = 0
        \end{split}
    \end{align}
    and thus
    it is difficult to give an explicit formula for the eigen frequencies {$\lambda$}. 
    In addition, it is also difficult to derive the asymptotic expansions of the eigen frequencies as $\lvert \xi \rvert \to 0$ and $\lvert \xi \rvert \to \infty$ due to the anisotropy for $\xi$.
    Hence, we may not obtain the explicit {dispersive} relation of the linear equation \eqref{eq:lin-0}, which yields a difficulty to prove the Strichartz type estimate for the solution to \eqref{eq:lin-0} just following the argument in the incompressible case.
    To overcome this difficulty, we shall follow the idea used in the studies of the low Mach number limit \cites{Ba-Ch-Da-11,Da-02-R,Da-He-16,Fu-22} and focus on the inviscid case $\mu=\mu'=0$.
    In fact, the eigen polynomial for the inviscid case $\mu=\mu'=0$ is so simple that the eigen frequencies may be represented explicitly.
    Namely, it is possible to obtain the dispersive estimate and the Strichartz {type} estimate for the \textit{inviscid} linear solution.
    Then, together with the energy estimate, we may obtain the Stricahrtz type estimate for the viscous linear solution and shrink the suitable space-time norm of the linear solution to \eqref{eq:lin-0} regardless of the size of initial data, provided that $\Omega$ and $\varepsilon$ satisfy the condition \eqref{cond-omega-epsilon} with a (possibly large) constant $\Omega_{T}$ depending on $T$.
    See Section \ref{sec:lin-str} for the precise argument. \par
    Finally, {let us} indicate why it is difficult to obtain the global well-posedness for System \eqref{eq:NSC-2}.
    It is well-known that the maximal regularity $u \in L^1(0,T;\dot{B}_{2,1}^{\frac{5}{2}}(\mathbb{R}^3))^{3}$ of the velocity field is a key ingredient for the global well-posedness of the compressible Navier{--}Stokes system.
    However, {it is not straightforward to extend this philosophy to the case of the compressible Navier{--}Stokes{--}Coriolis system~\eqref{eq:NSC-2}, since} it {seems to be} quite difficult to obtain the maximal $L^1(0,T;\dot{B}_{2,1}^{\frac{5}{2}})$-regularity for the low frequency part of the {linear velocity $u$} to \eqref{eq:lin-0}.
    {In general,} {it is known that} there are two methods to obtain the maximal {$L^1(0,T;\dot{B}_{2,1}^{\frac{5}{2}})$-}regularity estimates {in} the low frequency part of the solution {to the linearized system} {\eqref{eq:lin-0}}.
    The first one is to use the energy estimates.
    As it has been mentioned in \cites{Da-00,Ba-Ch-Da-11},
    the estimates for $\varepsilon \langle \nabla \Delta_j a, \Delta_j u\rangle_{L^2}$ play a crucial role in the energy method.
    Here, $\langle \, \cdot \, , \, \cdot \, \rangle_{L^2}$ stands for the $L^2$-inner product and $\Delta_j$ denotes the $j$th dyadic block
    of a Littlewood-Paley decomposition.
    To compute the time derivative of $\varepsilon \langle \nabla \Delta_j a, \Delta_j u\rangle_{L^2}$, we need to apply $\Delta_j$ to the second equation of~\eqref{eq:lin-0} and {taking inner product of it with $\Delta_j a$}.
    Then, from the Coriolis force, we meet the term $\Omega\varepsilon \langle e_3 \times \Delta_j u,\Delta_j a \rangle_{L^2}$, which provides a difficulty {in estimating the solution $u$} in the low frequency part $j \ll -1$ since the Coriolis force is anisotropic and {includes} no spacial derivative.
    The second {method to obtain the maximal {$L^1(0,T;\dot{B}_{2,1}^{\frac{5}{2}})$-}regularity estimate for the low frequency part} relies on the explicit formula for the linear semigroup corresponding to~\eqref{eq:lin-0}.
    If the real parts of all eigenvalues for~\eqref{eq:lin-0} are bounded by $-c|\xi|^2$ from below for some positive constant $c$ {\textit{independent} of $\Omega$ as well as $\varepsilon$}, then we may achieve our objective.
    However, as {we have mentioned} before, the eigenvalues {of the linear operator associated with \eqref{eq:lin-0}} are so complicated that it is quite hard to understand their properties.
    {Summarizing, at this moment}, it {seems to be} difficult to obtain the maximal $L^1(0,T;\dot{B}_{2,1}^{\frac{5}{2}})$-regularity for the {linear velocity} {$u$ to \eqref{eq:lin-0}},
    and thus the present paper only focuses on the long-time existence issue. 
\subsection*{Notation}
    We state some notations which are to be used throughout this paper.
    For $\alpha \in \mathbb R$, we set $\langle \alpha \rangle:= \sqrt{1+\alpha^2}$.
    We denote by $C$ the constant, which may differ in each line.
    In particular, $C=C(a_1,...,a_n)$ means that $C$ depends only on quantities $a_1,...,a_n$.
    {{We use} the notation $X \sim Y$ as an equivalent to $C^{- 1} X \le Y \le C X$.}
    We define a commutator for two operators $A$ and $B$ as $[A,B]=AB-BA$.
    
    Next, we mention the notations for function spaces.
    Let $\mathscr{S}(\mathbb{R}^3)$ be the set of all Schwartz functions on $\mathbb{R}^3$ and
    $\mathscr{S}'(\mathbb{R}^3)$ be the set of all tempered distributions on $\mathbb{R}^3$.
    For $f \in \mathscr{S}'(\mathbb{R}^3)$, we denote by $\mathscr{F}[f]=\widehat{f}$ and $\mathscr{F}^{-1}[f]$ the Fourier transform and the inverse transform of $f$, respectively.
    We recall the definition of the Besov spaces and the basic Littlewood-Paley theory.
    Let $\phi_0 \in \mathscr{S}(\mathbb{R}^3)$ satisfy
    \begin{align}
        0 \leq \phi_0 \leq 1, \qquad 
        \supp \phi_0 \subset \{\xi \in \mathbb{R}^3 \ ;\ 2^{-1} \leq |\xi| \leq 2 \},
    \end{align}
    and 
    \begin{align}
        \sum_{j \in\mathbb{Z}} \phi_j(\xi) = 1 \qquad {\rm for\ all\ }\xi \in \mathbb{R}^3\setminus \{0 \},
    \end{align}
    where $\phi_j(\xi):=\phi(2^{-j}\xi)$.
    Let $\{\Delta_j \}_{j \in \mathbb{Z}}$ be the Littlewood-Paley projection operators defined as 
    \begin{align}
        \Delta_j f := \mathscr{F}^{-1}\phi_j\mathscr{F}f.
    \end{align}
    Then, for $1 \leq p,\sigma \leq \infty$ and $s \in \mathbb{R}$, the Besov space $\dB_{p,\sigma}^s(\mathbb{R}^3)$ is defined by
    \begin{align}
        \dB_{p,\sigma}^s(\mathbb{R}^3)
        :=
        \left\{
        f \in \mathscr{S}'(\mathbb{R}^3)/\mathscr{P}(\mathbb{R}^3)
        \ ;\ 
        \| f \|_{\dB_{p,\sigma}^s} < \infty
        \right\},
    \end{align}
    where $\mathscr{P}(\mathbb{R}^3)$ denotes the set of all polynomials on $\mathbb{R}^3$ and
    \begin{align}
        \| f \|_{\dB_{p,\sigma}^s}
        :=
        \left\| 
        \{2^{sj} \| \Delta_j f \|_{L^p} \}_{j \in \mathbb{Z}} 
        \right\|_{\ell^{\sigma}(\mathbb{Z})}.
    \end{align}
    It is well-known that if $(s,\sigma) \in (-\infty, 3 \slash p) \times [1,\infty]$ or $(s,\sigma)=(3/p,1)$, 
    then $\dB_{p,\sigma}^s(\mathbb{R}^3)$ is identified by
    \begin{align}
        \left\{
        f \in \mathscr{S}'(\mathbb{R}^3)
        \ ;\ 
        f = \sum_{j \in \mathbb{Z}} \Delta_j f
        {\rm \ in\ }
        \mathscr{S}'(\mathbb{R}^3)
        {\rm \ and\ }
        \| f \|_{\dB_{p,\sigma}^s} < \infty
        \right\}.
    \end{align}
    We define the truncated Besov semi-norms by
    \begin{align}
        \| f \|_{\dB_{p,\sigma}^s}^{h;\beta}
        :={}&
        \left\| 
        \{2^{sj} \| \Delta_j f \|_{L^p} \}_{j} 
        \right\|_{\ell^{\sigma}(\{j \in \mathbb{Z}\ ; \ \beta < 2^j\})},\\
        \| f \|_{\dB_{p,\sigma}^s}^{m;\alpha,\beta}
        :={}&
        \left\| 
        \{2^{sj} \| \Delta_j f \|_{L^p} \}_{j} 
        \right\|_{\ell^{\sigma}( \{j \in \mathbb{Z}\ ;\ \alpha < 2^j \leq \beta \} )},\\
        \| f \|_{\dB_{p,\sigma}^s}^{\ell;\alpha}
        :={}&
        \left\| 
        \{2^{sj} \| \Delta_j f \|_{L^p} \}_{j} 
        \right\|_{\ell^{\sigma}(\{j \in \mathbb{Z} \ ;\ 2^j \leq \alpha \})}
    \end{align}
    for $0 \leq \alpha < \beta \leq \infty$.
    To control functions with the space-time variable, we use the Chemin-Lerner spaces.
    For $1 \leqslant p,\sigma,r \leqslant \infty$, $s \in \mathbb{R}$ and an interval $I \subset \mathbb R$, we define
    \begin{align}
        \widetilde{L^r}(I ; \dB_{p,\sigma}^s(\mathbb{R}^3))
        :={}&
        \left\{
        F : I \to \mathscr{S}'(\mathbb R^3)\ ;\ 
        \| F \|_{\widetilde{L^r}(I ; \dB_{p,\sigma}^s)} < \infty 
        \right\},\\
        \| F \|_{\widetilde{L^r}(I ; \dB_{p,\sigma}^s)}
        :={}&
        \left\| 
        \{2^{sj} \| \Delta_j F \|_{L^r( I ; L^p)} \}_{j \in \mathbb{Z}} 
        \right\|_{\ell^{\sigma}(\mathbb{Z})}.
    \end{align}
    We also define the truncated type norm as 
    {
    \begin{align}
        \| F \|_{{L^r}(I ; \dB_{p,\sigma}^s)}^{h;\beta}
        :={}&
        \left\| \| F \|_{\dB_{p,\sigma}^s}^{h;\beta} \right\|_{L^r(I)},\\
        \| F \|_{{L^r}(I ; \dB_{p,\sigma}^s)}^{m;\alpha,\beta}
        :={}&
        \left\| \| F \|_{\dB_{p,\sigma}^s}^{m;\alpha,\beta} \right\|_{L^r(I)},\\
        \| F \|_{{L^r}(I ; \dB_{p,\sigma}^s)}^{\ell;\alpha}
        :={}&
        \left\| \| F \|_{\dB_{p,\sigma}^s}^{\ell;\alpha} \right\|_{L^r(I)}.
    \end{align}
    }
    and 
    \begin{align}
        \| F \|_{\widetilde{L^r}(I ; \dB_{p,\sigma}^s)}^{h;\beta}
        :={}&
        \left\| 
        \{2^{sj} \| \Delta_j F \|_{L^r( I ; L^p)} \}_{j} 
        \right\|_{\ell^{\sigma}(\{j \in \mathbb{Z}\ ; \ \beta < 2^j\})},\\
        \| F \|_{\widetilde{L^r}(I ; \dB_{p,\sigma}^s)}^{m;\alpha,\beta}
        :={}&
        \left\| 
        \{2^{sj} \| \Delta_j F \|_{L^r( I ; L^p)} \}_{j} 
        \right\|_{\ell^{\sigma}( \{j \in \mathbb{Z}\ ;\ \alpha < 2^j \leq \beta \} )},\\
        \| F \|_{\widetilde{L^r}(I ; \dB_{p,\sigma}^s)}^{\ell;\alpha}
        :={}&
        \left\| 
        \{2^{sj} \| \Delta_j F \|_{L^r( I ; L^p)} \}_{j} 
        \right\|_{\ell^{\sigma}(\{j \in \mathbb{Z} \ ;\ 2^j \leq \alpha \})}.
    \end{align}
\subsection*{Organization of this paper}
    This paper is organized as follows.
    We focus on the linear analysis in Sections \ref{sec:lin-ene} and \ref{sec:lin-str}.
    In Section \ref{sec:lin-ene},
    we establish several linear energy estimates.
    The dispersive estimate and the Strichartz {type} estimate for the linear solution is proved in Section \ref{sec:lin-str}.
    In Section \ref{sec:LWP}, we prove the local well-posedness for~\eqref{eq:NSC-2}.
    Then, we prove our main result in Section \ref{sec:pf}.
\section{Linear energy estimates}\label{sec:lin-ene}
    In this section, we establish several energy estimates for the linearized system
    {
                \begin{align}\label{eq:lin}
                	\begin{dcases}
                		\partial_t a + \frac{1}{\varepsilon} \div u = f, 
                		& 
                		t > 0, x \in \mathbb{R}^3,\\
                		\partial_t u 
                		-
                		\mathcal{L} u
                		+ 
                		\Omega ( e_3 \times u ) 
                		+
                		\frac{1}{\varepsilon} \nabla a 
                		=
                		g,
                		&
                		t > 0, x \in \mathbb{R}^3,\\
                		a(0,x) = a_0(x), \quad u(0,x) = u_0(x),
                		&
                		x \in \mathbb{R}^3,
                	\end{dcases}
                \end{align}
                where {$a_0=a_0(x):\mathbb{R}^3 \to \mathbb{R}$} and {$u_0=u_0(x):\mathbb{R}^3 \to \mathbb{R}^3$} are given initial data and
    {$f=f(t,x):(0,\infty)\times \mathbb{R}^3 \to \mathbb{R}$} and {$g=g(t,x):(0,\infty)\times \mathbb{R}^3 \to \mathbb{R}^3$} are given external forces.
    }
    We begin with the following {basic} energy estimate{.}
    \begin{lemm}\label{lemm:simple-ene}
        There exists a positive constant $C=C(\mu)$ such that
        for each $\Omega \in \mathbb{R}$ and $\varepsilon>0$, the solution to {\eqref{eq:lin}} satisfies
        \begin{align}
            \| \Delta_j (a,u) \|_{L^{\infty}(0,t:L^2)}^2
            +
            \| \nabla \Delta_j u \|_{L^2( 0,t ; L^2 )}^2
            \leq 
            C \left( 
            \| \Delta_j(a_0,u_0) \|_{L^2}^2
            +
            \| \Delta_j(f,g) \|_{L^1( 0,t ; L^2)}^2
           \right)
        \end{align}
        for all $j \in \mathbb R$ and $t >0$, provided that the right{-}hand side is finite.
    \end{lemm}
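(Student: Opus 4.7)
The plan is to carry out a standard block-wise $L^2$ energy estimate on the localized system, exploiting two structural cancellations that make the seemingly dangerous terms $\Omega(e_3\times u)$ and $\varepsilon^{-1}\nabla a$, $\varepsilon^{-1}\div u$ harmless. Apply $\Delta_j$ to \eqref{eq:lin} and take the $L^2$ inner product of the density equation with $\Delta_j a$ and of the velocity equation with $\Delta_j u$. Adding the two identities, one obtains
\begin{align}
\tfrac{1}{2}\tfrac{d}{dt}\bigl(\|\Delta_j a\|_{L^2}^2+\|\Delta_j u\|_{L^2}^2\bigr)
-\langle \mathcal{L}\Delta_j u,\Delta_j u\rangle_{L^2}
+\Omega\langle e_3\times\Delta_j u,\Delta_j u\rangle_{L^2}
&+\tfrac{1}{\varepsilon}\bigl(\langle\div\Delta_j u,\Delta_j a\rangle_{L^2}+\langle\nabla\Delta_j a,\Delta_j u\rangle_{L^2}\bigr)\\
&=\langle\Delta_j f,\Delta_j a\rangle_{L^2}+\langle\Delta_j g,\Delta_j u\rangle_{L^2}.
\end{align}

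The two cancellations are immediate: the Coriolis term vanishes pointwise since $(e_3\times v)\cdot v=0$, and integration by parts kills the pair of $\varepsilon^{-1}$-terms. The viscous term is coercive,
\begin{align}
-\langle \mathcal{L}\Delta_j u,\Delta_j u\rangle_{L^2}=\mu\|\nabla\Delta_j u\|_{L^2}^2+(\mu+\mu')\|\div\Delta_j u\|_{L^2}^2\geq \mu\|\nabla\Delta_j u\|_{L^2}^2,
\end{align}
so with $E_j(t):=\|\Delta_j(a,u)(t)\|_{L^2}^2+2\mu\int_0^t\|\nabla\Delta_j u(s)\|_{L^2}^2\,ds$ the energy identity yields
\begin{align}
\tfrac{d}{dt}E_j(t)\leq 2\|\Delta_j(f,g)(t)\|_{L^2}\,\|\Delta_j(a,u)(t)\|_{L^2}\leq 2\|\Delta_j(f,g)(t)\|_{L^2}\sqrt{E_j(t)}.
\end{align}

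Dividing by $2\sqrt{E_j(t)}$ (with the usual regularization $\sqrt{E_j+\delta}$ to avoid degeneracy, then letting $\delta\to 0$) and integrating in time gives
\begin{align}
\sqrt{E_j(t)}\leq \sqrt{E_j(0)}+\|\Delta_j(f,g)\|_{L^1(0,t;L^2)},
\end{align}
and squaring produces the desired bound with $C=C(\mu)$. There is no real obstacle here: the only thing one has to notice is that the potentially large parameters $\Omega$ and $1/\varepsilon$ are absent from the final inequality precisely because of the antisymmetry of the Coriolis force and the integration-by-parts cancellation between $\varepsilon^{-1}\nabla a$ and $\varepsilon^{-1}\div u$. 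This is exactly the reason the lemma is \emph{uniform} in $\Omega$ and $\varepsilon$, which will be indispensable in the later sections where the Strichartz estimates are combined with this energy bound.
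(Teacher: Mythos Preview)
Your proof is essentially the same as the paper's: localize with $\Delta_j$, take the $L^2$ inner products, and use the antisymmetry of the Coriolis term together with the integration-by-parts cancellation of the $\varepsilon^{-1}$ terms to arrive at a differential inequality for $\|\Delta_j(a,u)\|_{L^2}^2$. The only difference in the endgame is cosmetic: the paper integrates in time, applies Young's inequality, and then takes the supremum to absorb $\tfrac{1}{8}\|\Delta_j(a,u)\|_{L^\infty(0,t;L^2)}^2$, whereas you use the $\tfrac{d}{dt}\sqrt{E_j}$ trick. Both are standard and lead to the same conclusion.

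One small correction: your coercivity step
\[
\mu\|\nabla\Delta_j u\|_{L^2}^2+(\mu+\mu')\|\div\Delta_j u\|_{L^2}^2\geq \mu\|\nabla\Delta_j u\|_{L^2}^2
\]
implicitly assumes $\mu+\mu'\geq 0$. Under the normalization $2\mu+\mu'=1$ this means $\mu\leq 1$, which is not part of the hypotheses. When $\mu>1$ one has $\mu+\mu'=1-\mu<0$, and the correct lower bound uses $\|\div\Delta_j u\|_{L^2}^2\leq\|\nabla\Delta_j u\|_{L^2}^2$ to obtain
\[
\mu\|\nabla\Delta_j u\|_{L^2}^2+(1-\mu)\|\div\Delta_j u\|_{L^2}^2\geq \|\nabla\Delta_j u\|_{L^2}^2.
\]
This is why the paper writes the constant as $\underline{\mu}:=\min\{\mu,1\}$. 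It does not affect the structure of your argument, only the value of $C(\mu)$.
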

    \begin{proof}
        Applying $\Delta_j$ to {\eqref{eq:lin}}, we have
        \begin{align}\label{eq:lin-j}
            \begin{dcases}
            \partial_t\Delta_j a + \frac{1}{\varepsilon} \div \Delta_j u = \Delta_j f , \\
            \partial_t\Delta_j u - \mathcal{L} \Delta_j u + \Omega ( e_3 \times \Delta_j u ) + \frac{1}{\varepsilon} \nabla \Delta_j a = \Delta_j g.
            \end{dcases}
        \end{align}
        Taking $L^2(\mathbb R^3)$-inner product of the first equation of \eqref{eq:lin-j} with $\Delta_j a$, 
        taking $L^2(\mathbb R^3)^3$-inner product of the second equation of \eqref{eq:lin-j} with $\Delta_j u$, and 
        summing up of them, we have 
        \begin{align}\label{simple-d}
            \frac{1}{2}
            \frac{d}{dt}
            \| \Delta_j (a,u) \|_{L^2}^2
            +
            \underline{\mu}
            \| \nabla \Delta_j u \|_{L^2}^2
            \leqslant
            \| \Delta_j(f,g) \|_{L^2}
            \| \Delta_j(a,u) \|_{L^2},
        \end{align}
        where $\underline{\mu}:=\min \{\mu,1\}$.
        Let us fix $t_0>0$ and let $0<t\leqslant t_0$. 
        Integrating {\eqref{simple-d}} with respect to time interval on {$[0,t]$}, we have
        \begin{align}
            &\frac{1}{2} \| \Delta_j (a,u) (t) \|_{L^2}^2
            +
            {\underline{\mu}}
            {\| \nabla \Delta_j u \|_{L^2(0,t;L^2)}^2}\\
            &\quad\leqslant
            \frac{1}{2} \| \Delta_j (a_0,u_0) \|_{L^2}^2
            +
            {\bigg\|}
            \| \Delta_j(f,g) \|_{L^2}
            \| \Delta_j(a,u) \|_{L^2}
            {\bigg\|_{L^1(0,t)}}\\
            &\quad\leqslant
            \frac{1}{2} \| \Delta_j (a_0,u_0) \|_{L^2}^2
            +
            \| \Delta_j(f,g) \|_{L^1(0,t_0;L^2)}
            \| \Delta_j(a,u) \|_{L^{\infty}( 0,{t};L^2)}\\
            &\quad\leqslant
            \frac{1}{2} \| \Delta_j (a_0,u_0) \|_{L^2}^2
            +
            {2}
            \| \Delta_j(f,g) \|_{L^1(0,t_0;L^2)}^2
            +
            \frac{1}{8}
            \| \Delta_j(a,u) \|_{L^{\infty}( 0,{t};L^2)}^2.
        \end{align}
        Hence, taking $L^{\infty}(0,t_0)$ norm with respect to $t$, we complete the proof.
    \end{proof}
    To obtain the maximal regularity estimates of the linear solution, we {follow} the argument in \cites{Ba-Ch-Da-11,Da-01-G} {to} obtain the following lemma{.}
    \begin{lemm}\label{lemm:middle-ene}
        For {$\beta,\mu>0$}, there exists a positive constant $C=C(\beta,\mu)$ such that 
        the solution $(a,u)$ of {\eqref{eq:lin}} satisfies 
        \begin{align}
            \| (a,u) \|_{\widetilde{L^{\infty}}(0,t ; \dB_{2,\sigma}^s ) \cap \widetilde{L^1}(0,t ; \dB_{2,\sigma}^s )}^{m;\alpha,\frac{\beta}{\varepsilon}}
            \leqslant
            C
            \left( 
            \| (a_0,u_0) \|_{\dB_{2,\sigma}^s}^{m;\alpha,\frac{\beta}{\varepsilon}}
            +
            \| (f,g) \|_{{\widetilde{L^1}}( 0,t ; \dB_{2,\sigma}^s)}^{m;\alpha,\frac{\beta}{\varepsilon}}
            \right)
        \end{align}
        for all $\Omega, \alpha \in \mathbb{R}$, $\varepsilon>0$ with $|\Omega|\varepsilon \leqslant \alpha < \beta/\varepsilon$, $s \in \mathbb R$, $1 \leqslant \sigma \leqslant \infty$, and $t>0$.
    \end{lemm}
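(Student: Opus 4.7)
The plan is to follow the cross-term functional approach of Bahouri--Chemin--Danchin, adapted to accommodate the Coriolis force. First I would apply $\Delta_j$ to the linearized system \eqref{eq:lin} and perform the standard $L^2$ energy estimate: since $e_3 \times v \perp v$ and the acoustic couplings cancel upon summing, both the Coriolis and pressure contributions vanish, yielding
\[
\frac{1}{2}\frac{d}{dt}\|\Delta_j(a,u)\|_{L^2}^2 + \mu\|\nabla \Delta_j u\|_{L^2}^2 + (\mu+\mu')\|\div \Delta_j u\|_{L^2}^2 = \langle \Delta_j f, \Delta_j a\rangle_{L^2} + \langle \Delta_j g, \Delta_j u\rangle_{L^2}.
\]
This identity provides dissipation only for $u$; the missing low-order dissipation for the density $a$ is what one must recover.

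The key step is to introduce the augmented functional
\[
\mathcal{L}_j(t) := \|\Delta_j(a,u)\|_{L^2}^2 - 2\varepsilon\eta\,\langle \div \Delta_j u, \Delta_j a\rangle_{L^2}
\]
for a small parameter $\eta = \eta(\beta, \mu) > 0$ to be fixed. A direct computation of $\tfrac{d}{dt}\langle \div \Delta_j u, \Delta_j a\rangle$ from \eqref{eq:lin} produces, among other terms, the positive contribution $\tfrac{1}{\varepsilon}\|\nabla \Delta_j a\|_{L^2}^2$, which is precisely the missing dissipation. Three error terms must then be absorbed: the viscous correction $\varepsilon\eta\,\langle \nabla\div \Delta_j u, \nabla \Delta_j a\rangle$, controlled by $\eta\beta\,\|\nabla\Delta_j u\|_{L^2}\|\nabla \Delta_j a\|_{L^2}$ via Bernstein and the upper bound $2^j\varepsilon \leq \beta$; a leftover $2\eta\|\div \Delta_j u\|_{L^2}^2$, absorbed into $\mu\|\nabla\Delta_j u\|_{L^2}^2$ for $\eta$ small; and, most delicately, the Coriolis cross-contribution $\Omega\varepsilon\eta\,\langle e_3\times\Delta_j u, \nabla \Delta_j a\rangle$, bounded after Bernstein by $|\Omega|\varepsilon\,2^{-j}\,\eta\,\|\nabla\Delta_j u\|_{L^2}\|\nabla \Delta_j a\|_{L^2} \leq \eta\,\|\nabla\Delta_j u\|_{L^2}\|\nabla \Delta_j a\|_{L^2}$, where the last inequality is exactly what the hypothesis $|\Omega|\varepsilon \leq \alpha < 2^j$ delivers. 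Finally, $|\varepsilon\eta\,\langle \div \Delta_j u, \Delta_j a\rangle| \lesssim \eta\beta\,\|\Delta_j(a,u)\|_{L^2}^2$ by the same Bernstein argument, so $\mathcal{L}_j \sim \|\Delta_j(a,u)\|_{L^2}^2$ once $\eta$ is chosen small in terms of $\mu$ and $\beta$.

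Collecting these bounds and applying Young's inequality produces
\[
\frac{d}{dt}\mathcal{L}_j + c(\mu,\beta)\bigl(\|\nabla \Delta_j u\|_{L^2}^2 + \|\nabla \Delta_j a\|_{L^2}^2\bigr) \lesssim \|\Delta_j(f,g)\|_{L^2}\,\mathcal{L}_j^{1/2}.
\]
Bernstein's lower bound on the annular support of $\Delta_j$ converts the dissipation into $c'\,2^{2j}\mathcal{L}_j$, and dividing by $\mathcal{L}_j^{1/2}$ yields the Gronwall-type ODE
\[
\frac{d}{dt}\mathcal{L}_j^{1/2} + c'\,2^{2j}\,\mathcal{L}_j^{1/2} \lesssim \|\Delta_j(f,g)\|_{L^2},
\]
whose time integration gives both the $L^\infty_tL^2$ bound on $\Delta_j(a,u)$ and the dissipative $L^1$-in-time estimate, with constants depending only on $\mu$ and $\beta$. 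Multiplying by $2^{sj}$ and taking the $\ell^\sigma$-norm over the middle-frequency window $\{\alpha < 2^j \leq \beta/\varepsilon\}$ then delivers the claimed estimate. The main obstacle is precisely the Coriolis cross-term: without the hypothesis $|\Omega|\varepsilon \leq \alpha$ the prefactor $|\Omega|\varepsilon\,2^{-j}$ may exceed one, the Coriolis error fails to be absorbed into the parabolic dissipation, and the closure breaks — so this hypothesis is essential rather than cosmetic.
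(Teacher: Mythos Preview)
Your proposal is correct and follows essentially the same route as the paper: both introduce the augmented functional $\|\Delta_j(a,u)\|_{L^2}^2 + 2\delta\varepsilon\,\mathrm{Re}\langle \nabla\Delta_j a,\Delta_j u\rangle_{L^2}$ (which coincides with your $\mathcal{L}_j$ after an integration by parts), derive the extra $\|\nabla\Delta_j a\|_{L^2}^2$ dissipation from the cross-term, and absorb the Coriolis contribution using exactly the bound $|\Omega|\varepsilon \leqslant \alpha < 2^j$ before closing via a Gronwall argument. Your emphasis on why the hypothesis $|\Omega|\varepsilon \leqslant \alpha$ is indispensable matches the paper's reasoning precisely.
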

    \begin{proof}
        Applying ${\varepsilon \nabla \Delta_j}$ to the first equation of \eqref{eq:lin-j} and taking $L^2(\mathbb R^3)$-inner product of it with $\Delta_j u$, we have
        \begin{align}\label{au-j-1}
            \langle \partial_t \varepsilon \nabla  \Delta_j a , \Delta_j u \rangle_{L^2}
            -
            \| \div \Delta_j u \|_{L^2}^2
            =
            \langle \varepsilon \nabla \Delta_j f , \Delta_j u \rangle_{L^2}.
        \end{align}
        Applying $\Delta_j$ to \eqref{eq:lin-j} and
        taking $L^2(\mathbb R^3)^3$-inner product of the second equation of~\eqref{eq:lin-j} with $\varepsilon \nabla \Delta_j a$, we see that
        \begin{align}\label{au-j-2}
            \begin{split}
            &\langle \varepsilon \nabla  \Delta_j a , \partial_t \Delta_ju \rangle_{L^2}
            +
            \langle  \varepsilon \nabla  \Delta_j a ,  \Delta \Delta_ju \rangle_{L^2}\\
            &\quad
            +
            \Omega\varepsilon
            \langle \nabla \Delta_j a, e_3 \times \Delta_j u \rangle_{L^2}
            +
            \| \nabla \Delta_j a \|_{L^2}^2\\
            &\quad=
            \langle \varepsilon \nabla \Delta_j a , \Delta_j g \rangle_{L^2}
            .
            \end{split}
        \end{align}
        Gathering \eqref{au-j-1} and \eqref{au-j-2}, we obtain 
        \begin{align}\label{au-j-3}
            \begin{split}
            &\partial_t 
            {\rm Re}
            \langle  \varepsilon \nabla  \Delta_j a , \Delta_ju \rangle_{L^2}
            +
            \| \nabla \Delta_j a \|_{L^2}^2\\
            &\quad
            \leqslant{}
            C\varepsilon2^{j}
            \| \Delta_j(f,g) \|_{L^2}
            \| \Delta_j(a,u) \|_{L^2}
            +
            C2^{2j}\| \Delta_ju \|_{L^2}^2\\
            &\qquad
            +
            C\varepsilon2^{3j}\| \Delta_j a \|_{L^2}\| \Delta_j u \|_{L^2}
            +
            C|\Omega|\varepsilon2^{j}\| \Delta_j a \|_{L^2}\| \Delta_j u \|_{L^2}.
            \end{split}
        \end{align}
        Let $0< \delta < 1$ be a constant to be determined later and set 
        \begin{align}
            {
                    V_j(t) : = 
                    \left(
                    \| \Delta_j(a,u) (t)\|_{L^2}^2 
                    + 
                    2\delta 
                    {\rm Re}
                    \langle  \varepsilon \nabla  \Delta_j a(t) , \Delta_ju(t) \rangle_{L^2}
                    \right)^{\frac{1}{2}}
            }
            .
        \end{align}
        We note that 
        \begin{align}
            \left|
            V_j^2 - \| \Delta_j(a,u) \|_{L^2}^2
            \right|
            \leqslant{}
            C\delta\varepsilon2^j\| \Delta_ja \|_{L^2}\| \Delta_ju \|_{L^2}
            \leqslant
            C_*\delta\| \Delta_j(a,u) \|_{L^2}^2
        \end{align}
        for some positive constant $C_*{=C_*(\beta)}$.
        {By \eqref{simple-d} and \eqref{au-j-3},} 
        for every $j \in \mathbb{Z}$ with $|\Omega|\varepsilon \leqslant 2^j \leqslant \beta/\varepsilon$ we find that
        \begin{align}
            &
            \frac{1}{2}\frac{d}{dt}V_j^2 
            +
            \underline{\mu}
            \| \nabla \Delta_j u \|_{L^2}^2
            +
            \delta 
            \| \nabla \Delta_j a \|_{L^2}^2\\
            &\quad 
            \leqslant{}
            C
            \| \Delta_j(f,g) \|_{L^2}
            \| \Delta_j(a,u) \|_{L^2}
            +
            C\delta 
            2^{2j}\| \Delta_ju \|_{L^2}^2
            +
            C\delta2^{2j}\| \Delta_j a \|_{L^2}\| \Delta_j u \|_{L^2}\\
            &\quad 
            \leqslant{}
            C
            \| \Delta_j(f,g) \|_{L^2}
            \| \Delta_j(a,u) \|_{L^2}
            +
            C^*\delta 
            \| \nabla \Delta_ju \|_{L^2}^2
            +
            \frac{1}{2}\delta\| \nabla \Delta_j a \|_{L^2}^2
        \end{align}
        for some constant $C^*\geqslant 1$.
        Choosing $\delta$ such that $\delta \leqslant {\min\{} \underline{\mu}/(2C^*), 1/(2C_*)\}$, 
        we obtain 
        \begin{gather}
            \frac{1}{2}\frac{d}{dt}V_j^2 
            +
            {\frac{\delta}{2}} 
            \| \nabla \Delta_j (a,u) \|_{L^2}^2
            \leqslant{}
            C
            \| \Delta_j(f,g) \|_{L^2}
            \| \Delta_j(a,u) \|_{L^2},\\
            \frac{1}{2}
            \| \Delta_j(a,u) \|_{L^2}^2 
            \leqslant
            V_j^2
            \leqslant 
            \frac{3}{2}
            \| \Delta_j(a,u) \|_{L^2}^2.\label{Vj}
        \end{gather}
        Thus, we see that 
        \begin{align}
            \frac{1}{2}\frac{d}{dt}V_j^2 
            +
            c
            {2^{2j}}V_j^2
            \leqslant{}
            C
            \| \Delta_j(f,g) \|_{L^2}
            V_j,
        \end{align}
        which implies 
        \begin{align}
            \sup_{0 \leqslant \tau \leqslant t }V_j(\tau)
            +
            2^{2j}
            \int_0^t
            V_j(\tau)
            d\tau 
            \leqslant 
            CV_j(0)
            +
            C\int_0^t
            \| \Delta_j(f,g)(\tau) \|_{L^2}
            d\tau.
        \end{align}
        Using \eqref{Vj} and taking $2^{sj}$-weighted $\ell^{\sigma}$-norm with respect to $j$ with $|\Omega| \varepsilon \leqslant 2^j \leqslant \beta/\varepsilon$ yield the desired result.
    \end{proof}
    Combining Lemmas \ref{lemm:simple-ene} and \ref{lemm:middle-ene}, we obtain the maximal regularity estimate {for the low frequency part}.
    \begin{cor}\label{cor:low-ene}
        {Let} {$\beta,\mu>0$}{.} There exists a positive constant $C=C(\beta,\mu)$ such that 
        the solution $(a,u)$ of {\eqref{eq:lin}} satisfies 
        \begin{align}\label{low-max-reg}
            \| (a,u) \|_{\widetilde{L^{\infty}}(0,t ; \dB_{2,\sigma}^s ) \cap \widetilde{L^1}(0,t ; \dB_{2,\sigma}^{s+2} )}^{\ell;\frac{\beta}{\varepsilon}}
            \leqslant
            C
            \langle {\Omega^2\varepsilon^2}T\rangle
            \left( 
            \| (a_0,u_0) \|_{\dB_{2,\sigma}^s}^{\ell;\frac{\beta}{\varepsilon}}
            +
            \| (f,g) \|_{{\widetilde{L^1}}( 0,t ; \dB_{2,\sigma}^s)}^{\ell;\frac{\beta}{\varepsilon}}
            \right)
        \end{align}
        for all $\Omega\in \mathbb{R}$, $\varepsilon>0$ with {$|\Omega|\varepsilon  < \beta/\varepsilon$}, $s \in \mathbb R$, $1 \leqslant \sigma \leqslant \infty$, and $0<t\leqslant T<\infty$.
        \end{cor}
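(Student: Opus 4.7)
The plan is to split the low-frequency region $\{j \in \mathbb{Z} : 2^j \leqslant \beta/\varepsilon\}$ into two disjoint pieces and use each of the two preceding lemmas on one piece:
\begin{align}
\{j : 2^j \leqslant \beta/\varepsilon\} = \{j : 2^j \leqslant |\Omega|\varepsilon\} \ \cup\ \{j : |\Omega|\varepsilon < 2^j \leqslant \beta/\varepsilon\}.
\end{align}
On the second ("middle") piece, I would apply Lemma \ref{lemm:middle-ene} with the choice $\alpha = |\Omega|\varepsilon$. In fact, I would go back into its proof and use the differential inequality $\frac{d}{dt} V_j^2 + c\, 2^{2j} V_j^2 \lesssim \|\Delta_j(f,g)\|_{L^2} V_j$ together with Grönwall, which yields the stronger bound $\sup_\tau V_j(\tau) + 2^{2j}\int_0^t V_j(\tau)\,d\tau \leqslant C V_j(0) + C\|\Delta_j(f,g)\|_{L^1(0,t;L^2)}$. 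Absorbing the factor $2^{2j}$ into the weight and taking $2^{sj}$-weighted $\ell^\sigma$-norm over $|\Omega|\varepsilon < 2^j \leqslant \beta/\varepsilon$ produces the full maximal-regularity bound $\widetilde{L^\infty}\dot B^s_{2,\sigma} \cap \widetilde{L^1}\dot B^{s+2}_{2,\sigma}$ on the middle band, with constant $C(\beta,\mu)$ independent of $\Omega$, $\varepsilon$, and $T$.

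On the first ("very low") piece, no parabolic smoothing is available in the energy estimate of Lemma \ref{lemm:simple-ene}, which only controls $\|\Delta_j(a,u)\|_{L^\infty(0,t;L^2)}$ (plus the weaker $L^2(0,t;L^2)$ bound on $\nabla\Delta_j u$). To recover the $\widetilde{L^1}\dot B^{s+2}_{2,\sigma}$ integrability here, I would "manufacture" the extra two derivatives using the crude bound $\|\cdot\|_{L^1(0,t)} \leqslant T\|\cdot\|_{L^\infty(0,t)}$ combined with the frequency localization $2^{2j} \leqslant \Omega^2\varepsilon^2$, which is valid exactly on this range. Thus
\begin{align}
2^{(s+2)j}\|\Delta_j(a,u)\|_{L^1(0,t;L^2)}
\leqslant T\, 2^{2j}\cdot 2^{sj}\|\Delta_j(a,u)\|_{L^\infty(0,t;L^2)}
\leqslant \Omega^2\varepsilon^2 T\cdot 2^{sj}\|\Delta_j(a,u)\|_{L^\infty(0,t;L^2)}.
\end{align}
Applying Lemma \ref{lemm:simple-ene} to the right-hand side and summing in $\ell^\sigma$ over $2^j \leqslant |\Omega|\varepsilon$ produces the estimate for the very low part with a prefactor $C\,\Omega^2\varepsilon^2 T$.

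Adding the two contributions and using $1 + \Omega^2\varepsilon^2 T \leqslant \sqrt{2}\,\langle \Omega^2\varepsilon^2 T\rangle$ yields the desired bound with constant $C\langle \Omega^2\varepsilon^2 T\rangle$. The only delicate point is the very low band: there the system has no parabolic smoothing on this range (the Coriolis term and the singular pressure terms $\varepsilon^{-1}\div u$, $\varepsilon^{-1}\nabla a$ do not help), and the factor $\Omega^2\varepsilon^2 T$ arising from the crude trade of $L^1$-in-time for $L^\infty$-in-time is genuinely necessary and is what drives the $\langle \Omega^2\varepsilon^2 T\rangle$ in the final constant.
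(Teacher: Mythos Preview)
Your approach is correct and is essentially identical to the paper's proof: the same split at the threshold $2^j = |\Omega|\varepsilon$, Lemma~\ref{lemm:simple-ene} plus the crude trade $\|\cdot\|_{L^1(0,t)} \leqslant T\|\cdot\|_{L^\infty(0,t)}$ together with $2^{2j}\leqslant \Omega^2\varepsilon^2$ on the very low band, and Lemma~\ref{lemm:middle-ene} on the middle band. You also correctly observed that the stated conclusion of Lemma~\ref{lemm:middle-ene} should carry the gain $\widetilde{L^1}\dot B_{2,\sigma}^{s+2}$ rather than $\widetilde{L^1}\dot B_{2,\sigma}^{s}$, and that this follows directly from the dyadic inequality $\sup_\tau V_j + 2^{2j}\int_0^t V_j \leqslant C V_j(0) + C\|\Delta_j(f,g)\|_{L^1(0,t;L^2)}$ established in its proof; the paper uses exactly this strengthened form when invoking the lemma.
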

        \begin{rem}
            Lemma \ref{lemm:middle-ene} does not allow us to take $\alpha$ as $\alpha=0$ unless $\Omega = 0$,
            {and thus the solution $(a, u)$ in the low frequency part $|\xi| \leqslant |\Omega|\varepsilon$ may be controlled solely by Lemma \ref{lemm:simple-ene}, according to our analysis.} 
            {Notice that} Lemma \ref{lemm:simple-ene} does not provide the global{-}$L^1${-}in{-}time estimate of the linear solution, which {forces} us {to} assume $T<\infty$.
        \end{rem}
        \begin{proof}[{Proof of Corollary \ref{cor:low-ene}}]
        Let us decompose the left{-}hand side as 
        \begin{align}
            \| (a,u) \|_{\widetilde{L^{\infty}}(0,t ; \dB_{2,\sigma}^s ) \cap \widetilde{L^1}(0,t ; \dB_{2,\sigma}^{s+2} )}^{\ell;\frac{\beta}{\varepsilon}}
            \leqslant{}&
            \| (a,u) \|_{\widetilde{L^{\infty}}(0,t ; \dB_{2,\sigma}^s ) \cap \widetilde{L^1}(0,t ; \dB_{2,\sigma}^s )}^{{\ell;|\Omega|\varepsilon}}\\
            &
            +
            \| (a,u) \|_{\widetilde{L^{\infty}}(0,t ; \dB_{2,\sigma}^s ) \cap {\widetilde{L^1}(0,t;}\dB_{2,\sigma}^{s+2} )}^{{m;|\Omega|\varepsilon,\frac{\beta}{\varepsilon}}}
            {.}
        \end{align}
        It holds by $T<\infty$ and Lemma \ref{lemm:simple-ene} that
        \begin{align}
            \| (a,u) \|_{\widetilde{L^{\infty}}(0,t ; \dB_{2,\sigma}^s ) \cap \widetilde{L^1}(0,t ; \dB_{2,\sigma}^{s+2} )}^{{\ell;|\Omega|\varepsilon}}
            \leqslant{}&
            C\langle {\Omega^2\varepsilon^2}T \rangle 
            \| (a,u) \|_{\widetilde{L^{\infty}}(0,t ; \dB_{2,\sigma}^s ) }^{{\ell;|\Omega|\varepsilon}}\\
            \leqslant{}&
            C
            \langle {\Omega^2\varepsilon^2}T \rangle
            \left( 
            \| (a_0,u_0) \|_{\dB_{2,\sigma}^s}^{{\ell;|\Omega|\varepsilon}}
            +
            \| (f,g) \|_{{\widetilde{L^1}}( 0,t ; \dB_{2,\sigma}^s)}^{{\ell;|\Omega|\varepsilon}}
            \right).
        \end{align}
        It follows from Lemma \ref{lemm:middle-ene} that
        \begin{align}
            \| (a,u) \|_{\widetilde{L^{\infty}}(0,t ; \dB_{2,\sigma}^s ) \cap \widetilde{L^1}(0,t ; \dB_{2,\sigma}^s )}^{{m;|\Omega|\varepsilon,\frac{\beta}{\varepsilon}}}
            \leqslant
            C
            \left( 
            \| (a_0,u_0) \|_{\dB_{2,\sigma}^s}^{{m;|\Omega|\varepsilon,\frac{\beta}{\varepsilon}}}
            +
            \| (f,g) \|_{{\widetilde{L^1}}( 0,t ; \dB_{2,\sigma}^s)}^{{m;|\Omega|\varepsilon,\frac{\beta}{\varepsilon}}}
            \right).
        \end{align}
        Gathering the above three estimates, we complete the proof.
    \end{proof}
    We may easily obtain the high frequency estimate for the linear solution by the standard effective velocity method (see \cite{Ha-11} for the detail).
    \begin{lemm}\label{lemm:high-ene}
        There exist positive constants $\beta_0=\beta_0(\mu)$ and $C=C(\mu)$ such that
        for each $\Omega \in \mathbb{R}$ and $\varepsilon > 0$, the solution to {\eqref{eq:lin}} satisfies
        \begin{align}
            &
            \varepsilon \| a \|_{\widetilde{L^{\infty}}( 0,t ; \dB_{p,1}^{s+1} )}^{h;\frac{\beta}{\varepsilon}}
            +
            \frac{1}{\varepsilon}\| a \|_{L^1( 0,t ; \dB_{p,1}^{s+1} )}^{h;\frac{\beta}{\varepsilon}}
            +
            \| u \|_{\widetilde{L^{\infty}}( 0,t ; \dB_{p,1}^{s} ) \cap L^1( 0,t ; \dB_{p,1}^{s+2} ) }^{h;\frac{\beta}{\varepsilon}}\\
            &\quad 
            \leqslant
            C
            \| (\varepsilon a_0, u_0) \|_{\dB_{p,1}^{s+1} \times \dB_{p,1}^{s}}^{h;\frac{\beta}{\varepsilon}}
            +
            \frac{C\varepsilon}{p}
            \left\|
            \| \div u \|_{L^{\infty}}
            \| a \|_{\dB_{p,1}^{s+1}}
            \right\|_{L^1(0,t)}\\
            &\qquad 
            +
            C
            \sum_{j \in \mathbb{Z}}
            2^{(s+1)j}
            \| \Delta_j f + u\cdot \nabla \Delta_j a \|_{{L^1(0,t;L^p)}}
            +
            C
            \| (f,g) \|_{L^1( 0,t ; \dB_{p,1}^{s})}^{h;\frac{\beta}{\varepsilon}}
        \end{align}
        for all $s \in \mathbb{R}$, $\beta \geqslant \beta_0 \sqrt{\langle \Omega \varepsilon^2 \rangle / 2 }$, $1 \leqslant p < \infty$, and $t>0$, provided that the right{-}hand side is finite.
    \end{lemm}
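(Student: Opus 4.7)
The plan is to follow the \emph{effective velocity} method of Haspot \cite{Ha-11}, adapted to the rotating case. The normalization $\nu = 2\mu+\mu' = 1$ yields the cancellation $\mathcal{L}\nabla(-\Delta)^{-1} = -\nabla$, which suggests introducing the auxiliary unknown
\begin{equation}
    v := u + \varepsilon\nabla(-\Delta)^{-1}a.
\end{equation}
Two structural identities then follow from \eqref{eq:lin}. Using $\div\nabla(-\Delta)^{-1}a = -a$, the density equation becomes the damped transport equation
\begin{equation}
    \partial_t a + a + \tfrac{1}{\varepsilon}\div v = f,
\end{equation}
so $a$ inherits a genuine zeroth-order damping once the singular $(1/\varepsilon)\div u$ coupling has been hidden inside $v$. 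On the other hand, using $\mathcal{L}u = \mathcal{L}v + \varepsilon\nabla a$ and $\nabla(-\Delta)^{-1}\div u = -\mathcal{Q}u$, the momentum equation turns into the perturbed heat equation
\begin{equation}
    \partial_t v - \mathcal{L}v = \bigl(\varepsilon - \tfrac{1}{\varepsilon}\bigr)\nabla a - \Omega(e_3\times u) + \mathcal{Q}u + g + \varepsilon\nabla(-\Delta)^{-1}f.
\end{equation}

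Next I derive dyadic $L^p$ bounds. Applying $\Delta_j$ to the $v$-equation and testing against $|\Delta_j v|^{p-2}\Delta_j v$ in the spirit of \cite{Ha-11}, the $L^p$-based parabolic maximal regularity for the Lam\'e operator on a single frequency block produces
\begin{equation}
    \|\Delta_j v\|_{L^\infty_t L^p} + 2^{2j}\|\Delta_j v\|_{L^1_t L^p} \leqslant C\Bigl(\|\Delta_j v(0)\|_{L^p} + \|\Delta_j (\text{rhs})\|_{L^1_t L^p}\Bigr).
\end{equation}
For $a$, I trivially add $u\cdot\nabla a$ to both sides of the damped equation, obtaining
\begin{equation}
    \partial_t a + u\cdot\nabla a + a = (f + u\cdot\nabla a) - \tfrac{1}{\varepsilon}\div v,
\end{equation}
and apply the $L^p$ transport estimate of Danchin \cite{Da-01-G} to each $\Delta_j a$. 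This yields the characteristic $C/p$ loss from Gronwall against $\int_0^t \|\div u\|_{L^\infty}\|a\|_{\dot{B}^{s+1}_{p,1}}\,d\tau$, together with the commutator-type term $\Delta_j f + u\cdot\nabla a$ appearing on the right-hand side of the lemma.

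It then remains to assemble these dyadic bounds on the high-frequency range $2^j > \beta/\varepsilon$, weighted by $2^{sj}$ or $2^{(s+1)j}$ and summed in $\ell^1_j$. I convert $v$-bounds into $u$-bounds via $u = v - \varepsilon\nabla(-\Delta)^{-1}a$; since $|\xi|^{-1}\leqslant \varepsilon/\beta$ on the retained blocks, the correction contributes only a $\varepsilon^2/\beta$-factor times $\|a\|$, which is absorbable once $\beta$ is large. The coefficient $(\varepsilon - 1/\varepsilon)\nabla a$ on the $v$-side, after division by the parabolic gain $2^{2j}\geqslant \beta^2/\varepsilon^2$, is bounded in $L^1_t(\dot{B}^{s}_{p,1})^{h;\beta/\varepsilon}$ by $C\beta^{-2}\cdot\varepsilon^{-1}\|a\|_{L^1_t\dot{B}^{s+1}_{p,1}}^{h;\beta/\varepsilon}$ using $\varepsilon\leqslant 1$. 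The zeroth-order remainders $\mathcal{Q}u$ and $\Omega(e_3\times u)$, after the same division, become contributions of order $\varepsilon^2/\beta^2$ and $|\Omega|\varepsilon^2/\beta^2$, respectively, times the $u$-energy. Since $|\Omega|\varepsilon^2 \leqslant \langle\Omega\varepsilon^2\rangle$ and the hypothesis $\beta\geqslant \beta_0\sqrt{\langle\Omega\varepsilon^2\rangle/2}$ forces $|\Omega|\varepsilon^2/\beta^2 \leqslant 2/\beta_0^2$, these terms can be absorbed into the left-hand side by choosing $\beta_0=\beta_0(\mu)$ sufficiently large.

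The main obstacle is the calibration of the high-frequency threshold $\beta/\varepsilon$ to the Coriolis strength. Because the anisotropic term $\Omega(e_3\times u)$ is a zeroth-order perturbation admitting no natural algebraic cancellation, the only way to control it is to absorb it into the viscous dissipation at high frequencies; this forces $\beta/\varepsilon$ to scale like $\sqrt{\langle\Omega\varepsilon^2\rangle}/\varepsilon$ and explains the constraint on $\beta$. A secondary technical point is that recovering the sharp $\varepsilon/p$ prefactor on the transport commutator requires the Lagrangian $L^p$ transport estimate of Danchin rather than a generic Bernstein commutator bound.
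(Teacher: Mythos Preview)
Your overall strategy matches the paper's, but the effective velocity has the wrong $\varepsilon$--scaling and the argument does not close. The correct auxiliary unknown (used in the paper and in Haspot's low--Mach version) is
\[
    w := u + \varepsilon^{-1}\nabla(-\Delta)^{-1}a,
\]
not $v := u + \varepsilon\nabla(-\Delta)^{-1}a$. With $w$, the identity $\mathcal{L}\nabla(-\Delta)^{-1}=-\nabla$ makes the singular pressure term $\tfrac{1}{\varepsilon}\nabla a$ cancel \emph{exactly} in the $w$--equation, leaving only the zeroth--order remainders $\Omega(e_3\times u)$ and $\tfrac{1}{\varepsilon^2}\mathcal{Q}u$; and the density equation becomes $\varepsilon\partial_t a + \tfrac{1}{\varepsilon}a = \varepsilon f - \div w$, so the damping rate on $a$ is $\varepsilon^{-2}$. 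That is precisely what produces $\varepsilon\|a\|_{\widetilde{L^\infty}(\dot B^{s+1}_{p,1})} + \varepsilon^{-1}\|a\|_{L^1(\dot B^{s+1}_{p,1})}$ on the left--hand side.

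With your $v$, two things fail. First, the $a$--equation $\partial_t a + a + \tfrac{1}{\varepsilon}\div v = f$ has damping rate $1$, so the dyadic $L^p$ estimate only yields $\|\Delta_j a\|_{L^\infty_t L^p}+\|\Delta_j a\|_{L^1_t L^p}$; you never recover the factor $\varepsilon^{-1}$ in front of $\|a\|_{L^1(\dot B^{s+1}_{p,1})}$ that the lemma requires (and the statement allows any $\varepsilon>0$, so you cannot silently assume $\varepsilon\leqslant 1$ either). Second, your $v$--equation carries the residual $(\varepsilon-\tfrac{1}{\varepsilon})\nabla a$. In the maximal regularity bound this contributes $|\varepsilon-\tfrac{1}{\varepsilon}|\,\|a\|_{L^1(\dot B^{s+1}_{p,1})}$ on the right; when you try to absorb it into the $a$--damping you face the constraint $|\varepsilon^2-1|\leqslant c$ with $c$ small, which is impossible for $\varepsilon\to 0$. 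No factor of $\beta^{-2}$ appears here --- the ``parabolic gain'' $2^{2j}$ is already spent to put $v$ in $L^1(\dot B^{s+2}_{p,1})$, and $\nabla a$ sits at the matching regularity $\dot B^{s+1}_{p,1}$ with no frequency slack left. If you switch to $w=u+\varepsilon^{-1}\nabla(-\Delta)^{-1}a$, the rest of your outline (transport estimate for $a$ with the $\tfrac{1}{p}$--loss, absorption of $\Omega(e_3\times u)$ and $\varepsilon^{-2}\mathcal{Q}u$ via $\langle\Omega\varepsilon^2\rangle/(2^j\varepsilon)^2\leqslant c$ on $2^j>\beta/\varepsilon$) is correct and coincides with the paper's proof.
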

    \begin{proof}
        {Following} the argument in  \cite{Ha-11}{,}
        {we} introduce the effective velocity $w:=u+\varepsilon^{-1}\nabla(-\Delta)^{-1}a$.
        Then, there holds 
        \begin{align}\label{eq:eff-ve}
            \begin{dcases}
            \varepsilon \partial_t a + \frac{1}{\varepsilon} a 
            = 
            \varepsilon f - \div w, \\
            \partial_t w - \mathcal{L}w = \frac{1}{\varepsilon}\nabla (-\Delta)^{-1}f + g - \Omega(e_3 \times u) - \frac{1}{\varepsilon^2}\nabla\div(-\Delta)^{-1}u,\\
            a(0,x) = a_0(x), \quad w(0,x)=w_0(x),
            \end{dcases}
        \end{align}
        where $w_0:=u_0+\varepsilon^{-1}\nabla(-\Delta)^{-1}a_0$.
        Applying $\Delta_j$ to {the first equation in} \eqref{eq:eff-ve} we see that 
        \begin{align}\label{eq:eff-ve-j}
            \varepsilon \partial_t \Delta_j a + \frac{1}{\varepsilon} \Delta_j a
            +
            \varepsilon 
            u \cdot \nabla \Delta_j a
            = 
            \varepsilon (\Delta_j f + u \cdot \nabla \Delta_j a) 
            - 
            \div \Delta_j w
        \end{align}
        Taking the $L^2(\mathbb R^3)^3$-inner product the first equation of \eqref{eq:eff-ve-j} with $|\Delta_j a|^{p-2}\Delta_ja$, we obtain 
        \begin{align}
            \frac{\varepsilon}{p}
            \frac{d}{dt}
            \| \Delta_j a \|_{L^p}^p
            +
            \frac{1}{\varepsilon}\| \Delta_j a \|_{L^p}^p
            \leqslant{}&
            \left(
            \varepsilon\| \Delta_j f + u \cdot \nabla \Delta_j a \|_{L^p}
            +
            \| \div \Delta_j w \|_{L^p}
            \right)
            \| \Delta_j a \|_{L^p}^{p-1}\\
            &
            +
            \frac{1}{p}
            \int_{\mathbb R^3} | \div u | |\Delta_j a |^p dx,
        \end{align}
        where we have used 
        \begin{align}
            \int_{\mathbb R^3} ( u \cdot \nabla \Delta_j a )\cdot|\Delta_ja|^{p-2}\Delta_ja dx
            =
            -
            \frac{1}{p}
            \int_{\mathbb R^3} (\div u)
            |\Delta_ja|^p dx.
        \end{align}
        Thus, we obtain 
        \begin{align}\label{eff-a}
            \begin{split}
            &\varepsilon
            \| \Delta_j a \|_{L^{\infty}(0,t;L^p)}
            +
            \frac{1}{\varepsilon}
            \| \Delta_j a \|_{L^1(0,t;L^p)}
            \leqslant
            \varepsilon 
            \| \Delta_j a_0 \|_{L^p}
            +
            C_1^*2^j\| \Delta_j w \|_{L^1(0,t;L^p)}\\
            &\quad
            +
            \varepsilon
            \| \Delta_j f + u \cdot \nabla \Delta_j a\|_{L^1(0,t;L^p)}
            +
            \frac{\varepsilon}{p}
            \left\|
            \| \div u \|_{L^{\infty}}
            \| a \|_{\dB_{p,1}^{\frac{3}{p}}}
            \right\|_{L^1(0,t)}
            \end{split}
        \end{align}
        for some positive constant $C_1^*$.
        It follows from the maximal regularity estimates for the semigroup generated by the Lam\'e operator that
        \begin{align}\label{eff-w}
            \begin{split}
            &
            \| \Delta_j w \|_{L^{\infty}(0,t;L^p)}
            +
            2^{2j}
            \| \Delta_j w \|_{L^1(0,t;L^p)}
            \leqslant{}
            C_2^*\| \Delta_j w_0 \|_{L^p}\\
            &\quad
            +
            \frac{C_2^*}{\varepsilon}2^{-j}\| \Delta_j f \|_{L^1(0,t;L^p)}
            +
            C_2^*\| \Delta_j g \|_{L^1(0,t;L^p)}
            +
            C_2^*
            \frac{\langle \Omega \varepsilon^2 \rangle}{\varepsilon^2}
            \|\Delta_ju\|_{L^1(0,t;L^p)}
            \end{split}
        \end{align}
        for some positive constant $C_2^*=C_2^*(\mu)$.
        Let $\delta$ be a positive constant to be determined later.
        Then, combining \eqref{eff-a} and \eqref{eff-w}, we have
        \begin{align}\label{est-eff-1}
            \begin{split}
            &\delta 2^j
            \varepsilon
            \| \Delta_j a \|_{L^{\infty}(0,t;L^p)}
            +
            \frac{\delta2^j}{\varepsilon}
            \| \Delta_j a \|_{L^1(0,t;L^p)}
            +
            \| \Delta_j w \|_{L^{\infty}(0,t;L^p)}
            +
            2^{2j}
            \| \Delta_j w \|_{L^1(0,t;L^p)}\\
            &\quad
            \leqslant
            \delta 2^j
            \varepsilon 
            \| \Delta_j a_0 \|_{L^p}
            +
            C_2^*
            \| \Delta_j w_0 \|_{L^p}
            +
            \delta 
            C_1^*2^{2j}\| \Delta_j w \|_{L^1(0,t;L^p)}\\
            &\qquad
            +
            \delta 2^j 
            \varepsilon
            \| \Delta_j f + u \cdot \nabla \Delta_j a\|_{L^1(0,t;L^p)}
            +
            \frac{\delta 2^j \varepsilon}{p}
            \bigg\|
            \| \div u \|_{L^{\infty}}
            \|\Delta_j a \|_{L^p}
            \bigg\|_{L^1(0,t)}\\
            &\qquad 
            +
            \frac{C_2^*}{2^j\varepsilon}\| \Delta_j f \|_{L^1(0,t;L^p)}
            +
            C_2^*\| \Delta_j g \|_{L^1(0,t;L^p)}
            +
            C_2^*
            \frac{\langle \Omega \varepsilon^2 \rangle}{\varepsilon^2}
            \|\Delta_ju\|_{L^1(0,t;L^p)}.
            \end{split}
        \end{align}
        For the last term of the right{-}hand side of {\eqref{eff-a}}, there holds by the definition of $w$ that
        \begin{align}\label{est-eff-2}
            C_2^*
            \frac{\langle \Omega \varepsilon^2 \rangle}{\varepsilon^2}
            \|\Delta_ju\|_{L^1(0,t;L^p)}
            \leqslant
            C_3^*
            \frac{\langle \Omega \varepsilon^2 \rangle}{(2^j\varepsilon)^2}
            \left(
            2^{2j}\|\Delta_jw\|_{L^1(0,t;L^p)}
            +
            \frac{2^j}{\varepsilon}
            \|\Delta_ja\|_{L^1(0,t;L^p)}
            \right)\qquad
        \end{align}
        for some positive constant $C_3^*=C_3^*(\mu)$.
        Let us choose $\delta$ so that $1-C_1^*\delta  - {\delta} \geqslant {\delta}$ and assume that $j$ satisfies ${C_3^*}\langle \Omega \varepsilon^2 \rangle/(2^j\varepsilon)^2 \leqslant {\delta}$.
        Then, collecting \eqref{est-eff-1} and \eqref{est-eff-2}, 
        we see that 
        \begin{align}
            &
            2^j
            \varepsilon
            \| \Delta_j a \|_{L^{\infty}(0,t;L^p)}
            +
            \frac{2^j}{\varepsilon}
            \| \Delta_j a \|_{L^1(0,t;L^p)}
            +
            \| \Delta_j w \|_{L^{\infty}(0,t;L^p)}
            +
            2^{2j}
            \| \Delta_j w \|_{L^1(0,t;L^p)}\\
            &\quad
            \leqslant
            {C}
            2^j
            \varepsilon 
            \| \Delta_j a_0 \|_{L^p}
            +
            {C}
            \| \Delta_j w_0 \|_{L^p}
            +
            C\| \Delta_j (f,g) \|_{L^1(0,t;L^p)}\\
            &\qquad
            +
            C
            2^j 
            \varepsilon
            \| \Delta_j f + u \cdot \nabla \Delta_j a\|_{L^1(0,t;L^p)}
            +
            \frac{C2^j \varepsilon}{p}
            \bigg\|
            \| \div u \|_{L^{\infty}}
            \|\Delta_j a \|_{L^p}
            \bigg\|_{L^1(0,t)}.
        \end{align}
        Here, since
        there hold
        \begin{align}
            2^j\varepsilon\|\Delta_ja_0 \|_{L^p}
            +
            \| \Delta_j w_0 \|_{L^p}
            \sim 
            2^j\varepsilon\|\Delta_ja_0 \|_{L^p}
            +
            \| \Delta_j u_0 \|_{L^p}
        \end{align}
        and
        \begin{gather}
            2^j
            \varepsilon
            \| \Delta_j a \|_{L^{\infty}(0,t;L^p)}
            +
            \| \Delta_j w \|_{L^{\infty}(0,t;L^p)}
            \sim 
            2^j
            \varepsilon
            \| \Delta_j a \|_{L^{\infty}(0,t;L^p)}
            +
            \| \Delta_j u \|_{L^{\infty}(0,t;L^p)},\\
            \frac{2^j}{\varepsilon}
            \| \Delta_j a \|_{L^1(0,t;L^p)}
            +
            2^{2j}
            \| \Delta_j w \|_{L^1(0,t;L^p)}
            \sim
            \frac{2^j}{\varepsilon}
            \| \Delta_j a \|_{L^1(0,t;L^p)}
            +
            2^{2j}
            \| \Delta_j u \|_{L^1(0,t;L^p)},
        \end{gather}
        provided that $2^j\varepsilon \geqslant R$ for some sufficiently large absolute constant $R>1$, we complete the proof.
    \end{proof}

\section{Linear dispersive estimates}\label{sec:lin-str}
    In this section, we revisit the linearized equation 
    \begin{align}\label{eq:lin-re}
        \begin{dcases}
            \partial_t a + \frac{1}{\varepsilon} \div u = f, 
            & 
            t > 0, x \in \mathbb{R}^3,\\
            \partial_t u 
            -
            \mathcal{L} u
            + 
            \Omega ( e_3 \times u ) 
            +
            \frac{1}{\varepsilon} \nabla a 
            =
            g,
            &
            t > 0, x \in \mathbb{R}^3,\\
            a(0,x) = a_0(x), \quad u(0,x) = u_0(x),
            &
            x \in \mathbb{R}^3
        \end{dcases}
    \end{align}
    and consider the dispersive nature of the rotation and the acoustic waves in \eqref{eq:lin-re}.
    Our aim of this section is to prove the following proposition{.}
    \begin{prop}\label{prop:lin-str-visc}
        Let $2 \leq q,r \leq \infty$ satisfy
        \begin{align}
            \frac{1}{q} + \frac{1}{r} \leq \frac{1}{2}, \qquad
            ( q,r ) \neq ( \infty,2 ).
        \end{align}
        Then, 
        there exists a positive constant $C=C(q,r)$ such that
        for any $\varepsilon>0$ and $\Omega \in \mathbb{R} \setminus \{0\}$,
        the solution $(a,u)$ of \eqref{eq:lin-re} satisfies for any $0 < t \leqslant T < \infty$,
        \begin{align}\label{lin-str-visc-1}
            \begin{split}
            \| \Delta_j ( a,u ) \|_{L^r( 0,t ; L^q )}
            \leq{}
            &
            C
            2^{3(\frac{1}{2} - \frac{1}{q} - \frac{1}{r})j}
            |\Omega|^{\frac{2}{r}}
            \varepsilon^{\frac{3}{r}}
            \left\langle \Omega^2\varepsilon^{2}T \right\rangle\\
            &\times
            \left(
            \| \Delta_j(a_0, u_0) \|_{L^2}
            +
            \| \Delta_j(f, g) \|_{L^1( 0,t ; L^2)}
            \right)
            \end{split}
        \end{align}
        for all $j \in \mathbb{Z}$ with $2^j \leq |\Omega| \varepsilon$ and
        \begin{align}\label{lin-str-visc-2}
            \begin{split}
            \| \Delta_j ( a,u ) \|_{L^r( 0,t ; L^q )}
            \leq{}
            &
            C
            2^{3(\frac{1}{2} - \frac{1}{q})j}
            |\Omega|^{-\frac{1}{r}}\\
            &\times
            \left(
            \| \Delta_j(a_0, u_0) \|_{L^2}
            +
            \| \Delta_j(f, g) \|_{L^1( 0,t ; L^2)}
            \right)
            \end{split}
        \end{align}
        for all $j \in \mathbb{Z}$ with $|\Omega| \varepsilon < 2^j \leqslant \beta_0/\varepsilon$.
        Here, $\beta_0$ is the constant {appearing} in Lemma~\ref{lemm:high-ene}.
        In particular, it holds
        \begin{align}\label{lin-str-visc-3}
            \begin{split}
            \| (a,u) \|_{\widetilde{L^r}( 0,t ; \dB_{q,1}^{\frac{3}{q}-1+\frac{2}{r}} )}^{\ell;\alpha}
            \leqslant{}&
            C
            \langle \alpha^2T \rangle^{2} 
            \left( 
            \varepsilon^{\frac{2}{3r}} 
            + 
            |\Omega|^{-\frac{1}{r}}
            \alpha^{\frac{2}{r}} 
            \right)\\
            &
            \times
            \left(
            \| (a_0,u_0) \|_{\dB_{q,1}^{\frac{1}{2}}}^{\ell;\alpha}
            +
            \| (f,g) \|_{L^1(0,t ; \dB_{2,1}^{\frac{1}{2}})}^{\ell;\alpha}
            \right)
            \end{split}
        \end{align}
        if $|\Omega|\varepsilon \leqslant 1$ and $1 \leqslant \alpha \leqslant \beta_0/\varepsilon$.
    \end{prop}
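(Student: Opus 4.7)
The strategy, as foreshadowed in the introduction, is to capture the dispersion at the inviscid level, where the eigenfrequencies are explicit, and then transfer the resulting Strichartz estimates back to the viscous system using the energy bounds of Section~\ref{sec:lin-ene}. First, I would set $\mu=\mu'=0$ in \eqref{eq:lin-re} and observe that the characteristic polynomial \eqref{egn-ply} collapses to the biquadratic
\[
\lambda^4 + \Bigl(\tfrac{|\xi|^2}{\varepsilon^2} + \Omega^2\Bigr)\lambda^2 + \tfrac{\Omega^2 \xi_3^2}{\varepsilon^2} = 0,
\]
whose four purely imaginary roots $\pm i\lambda_\pm(\xi)$ admit an explicit form in terms of $|\xi|$ and $\xi_3$. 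The inviscid semigroup can then be written as a sum of oscillatory Fourier multipliers $e^{\pm i t \lambda_\pm(\xi)}$ weighted by projector symbols derived from the linear algebra of the system.

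Second, I would prove frequency-localised pointwise dispersive estimates for each branch by stationary phase applied to the kernels $\mathscr{F}^{-1}[e^{\pm i t \lambda_\pm(\xi)}\phi_j]$. In the low-frequency regime $2^j\leqslant|\Omega|\varepsilon$, the asymptotics give $\lambda_-(\xi)\sim\Omega\,\xi_3/|\xi|$ (the Poincar\'e branch) and $\lambda_+(\xi)\sim\Omega$, so that after a rescaling the analysis reduces to the dispersive study of $e^{\pm i\Omega t D_3/|D|}$ already carried out in \cites{Iw-Ta-13,Iw-Ta-15,Ko-Le-Ta-14-1}, producing an $L^\infty$-decay of order $(|\Omega|t)^{-1}$ that accounts for the factor $|\Omega|^{2/r}\varepsilon^{3/r}$ in \eqref{lin-str-visc-1}. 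In the intermediate regime $|\Omega|\varepsilon<2^j\leqslant\beta_0/\varepsilon$, the acoustic branch dominates with $\lambda_\pm(\xi)\sim 2^j/\varepsilon$, and a stationary-phase argument in the angular variable of $\xi$ yields a pointwise decay compatible with the factor $|\Omega|^{-1/r}$ in \eqref{lin-str-visc-2}.

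Third, these pointwise dispersive bounds are converted into Strichartz-type estimates by the Keel--Tao $TT^*$ argument applied to each dyadic block; the admissibility condition $1/q+1/r\leqslant 1/2$ with $(q,r)\ne(\infty,2)$ is precisely the Keel--Tao range corresponding to this dispersive decay, while the factor $2^{3(1/2-1/q-1/r)j}$ in \eqref{lin-str-visc-1} and $2^{3(1/2-1/q)j}$ in \eqref{lin-str-visc-2} come from counting the Bernstein losses when passing from $L^2$ to $L^q$. To reintroduce the viscosity, I would write the solution of \eqref{eq:lin-re} through Duhamel as the inviscid semigroup applied to the data plus a correction whose source involves $\mathcal{L}u$; the correction is absorbed thanks to the parabolic smoothing of $\mathcal{L}$ together with the energy estimates of Lemma~\ref{lemm:simple-ene}, Lemma~\ref{lemm:middle-ene}, and Corollary~\ref{cor:low-ene}. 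The large prefactor $\langle\Omega^2\varepsilon^2 T\rangle$ in \eqref{lin-str-visc-1} is precisely the loss generated by Corollary~\ref{cor:low-ene} in the sub-range $2^j\leqslant|\Omega|\varepsilon$ that Lemma~\ref{lemm:middle-ene} does not cover.

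Finally, the summed estimate \eqref{lin-str-visc-3} is obtained by decomposing the summation range $\{2^j\leqslant\alpha\}$ into $\{2^j\leqslant|\Omega|\varepsilon\}$ (controlled by \eqref{lin-str-visc-1}) and $\{|\Omega|\varepsilon<2^j\leqslant\alpha\}$ (controlled by \eqref{lin-str-visc-2}), summing the resulting geometric series in $j$ in the $\ell^1$-based Chemin--Lerner norm, and bookkeeping the powers of $\alpha$, $|\Omega|$ and $\varepsilon$ together with the polynomial factor in $\langle \alpha^2 T\rangle$. The hard part, I expect, will be the stationary-phase analysis for the anisotropic phases $\lambda_\pm(\xi)$: their Hessians degenerate along the rotation axis $\xi_{\mathrm{h}}=0$ for the Poincar\'e branch and along the plane $\xi_3=0$ for the acoustic branch, and handling these degeneracies uniformly in $\varepsilon$ and $\Omega$ in order to recover the sharp exponents in \eqref{lin-str-visc-1}--\eqref{lin-str-visc-2} is the technical heart of the argument.
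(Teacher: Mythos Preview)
Your strategy is the paper's: establish Strichartz estimates for the inviscid system via the explicit eigenfrequencies and stationary phase, then treat the viscous term $\mathcal{L}u$ as a Duhamel source controlled by the energy bounds of Lemmas~\ref{lemm:simple-ene}--\ref{lemm:middle-ene}, and finally sum dyadic blocks for \eqref{lin-str-visc-3}. However, your phase asymptotics are partly inverted. After the paper's rescaling $\xi\mapsto\xi/(\Omega\varepsilon)$, $t\mapsto\Omega t$, the phases become $\lambda^{\pm}(\xi)=\tfrac12\bigl(\sqrt{|\xi|^2+2\xi_3+1}\pm\sqrt{|\xi|^2-2\xi_3+1}\bigr)$; at \emph{high} rescaled frequency one has $\lambda^+\sim|\xi|$ and $\lambda^-\sim\xi_3/|\xi|$, whereas at \emph{low} rescaled frequency $\lambda^+\sim1+\tfrac12|\xi_{\rm h}|^2$ and $\lambda^-\sim\xi_3-\tfrac12\xi_3|\xi_{\rm h}|^2$. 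Thus in the intermediate regime $|\Omega|\varepsilon<2^j\leqslant\beta_0/\varepsilon$ both branches contribute and the rotational phase $\xi_3/|\xi|$ (not the acoustic one) is the bottleneck, which is what produces $|\Omega|^{-1/r}$ in \eqref{lin-str-visc-2}. In the regime $2^j\leqslant|\Omega|\varepsilon$ the leading dispersive phases, after removing linear terms, are $|\xi_{\rm h}|^2$ and $\xi_3|\xi_{\rm h}|^2$ (not $\xi_3/|\xi|$), whose rank-two Hessians yield the decay $(1+2^{3j}|t|)^{-1}$ in rescaled time; the resulting $2^{-3j/r}$ is what generates $|\Omega|^{2/r}\varepsilon^{3/r}$ in \eqref{lin-str-visc-1}. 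Two technical points you should also anticipate: the paper uses a version of the Littman lemma that is stable under small $C^{d+3}$-perturbations of the phase, so the Taylor remainders in these expansions can be absorbed uniformly; and $\lambda^{\pm}$ fail to be smooth at $\xi=(0,0,\pm1)$, which forces an additional dyadic decomposition around those points in the middle-frequency block.
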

    In order to analyze the dispersive nature of the linear equation {\eqref{eq:lin}},
    we focus on the eigen frequencies of the linear operator.
    However, the corresponding eigen polynomial is given by
    {\eqref{egn-ply}, and hence the corresponding eigen frequencies are so complicated}. Moreover, it is difficult to derive the {asymptotic} expansion of the eigen frequencies due to the anisotropy for $\xi$.
    However, the situation  becomes much simpler in the inviscid case:
    \begin{align}\label{eq:lin-inv-1}
        \begin{dcases}
            \partial_t b + \frac{1}{\varepsilon} \div v = f, 
            & 
            t \in \mathbb{R}, x \in \mathbb{R}^3,\\
            \partial_t v 
            + 
            \Omega ( e_3 \times v ) 
            +
            \frac{1}{\varepsilon} \nabla b 
            =
            g,
            &
            t \in \mathbb{R}, x \in \mathbb{R}^3,\\
            b(0,x) = b_0(x), \quad v(0,x) = v_0(x),
            &
            x \in \mathbb{R}^3.
        \end{dcases}
    \end{align}
    {As it will be mentioned below, we may obtain the explicit formulas for the eigen frequencies of \eqref{eq:lin-inv-1} and thus obtain the dispersive estimates and Strichartz estimates for the solutions to \eqref{eq:lin-inv-1}.}
    \begin{lemm}\label{lemm:lin-str-inv}
        Let $2 \leq q,r \leq \infty$ satisfy
        \begin{align}
            \frac{1}{q} + \frac{1}{r} \leq \frac{1}{2}, \qquad
            ( q,r ) \neq ( \infty,2 ).
        \end{align}
        Then, there exists a positive constant $C=C(q,r)$ such that
        for any $\varepsilon >0$ and $\Omega \in \mathbb{R} \setminus \{0\}$,
        the solution $(b,v)$ of \eqref{eq:lin-inv-1} satisfies
        \begin{align}
            \| \Delta_j ( b,v ) \|_{L^r( \mathbb{R} ; L^q )}
            \leq
            C
            2^{3(\frac{1}{2} - \frac{1}{q} - \frac{1}{r})j}
            |\Omega|^{\frac{2}{r}}
            \varepsilon^{\frac{3}{r}}
            \left(
            \| \Delta_j(b_0, v_0) \|_{L^2}
            +
            \| \Delta_j(f, g) \|_{L^1( \mathbb{R} ; L^2)}
            \right)
        \end{align}
        for all $j \in \mathbb{Z}$ with $2^j \leq |\Omega| \varepsilon$ and
        \begin{align}
            \| \Delta_j ( b,v ) \|_{L^r( \mathbb{R} ; L^q )}
            \leq
            C
            2^{3(\frac{1}{2} - \frac{1}{q})j}
            |\Omega|^{-\frac{1}{r}}
            \left(
            \| \Delta_j(b_0, v_0) \|_{L^2}
            +
            \| \Delta_j(f, g) \|_{L^1( \mathbb{R} ; L^2)}
            \right)
        \end{align}
        for all $j \in \mathbb{Z}$ with $2^j > |\Omega| \varepsilon$.
    \end{lemm}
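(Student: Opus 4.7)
The plan is to reduce via Duhamel to the homogeneous case, diagonalize the Fourier-transformed system to obtain explicit half-wave propagators, and then apply the abstract Keel--Tao $TT^*$ argument once suitable dispersive pointwise decay estimates have been established by stationary phase. By Duhamel's formula and the Minkowski inequality it suffices to prove the homogeneous version of the estimates (with $f = g \equiv 0$); the inhomogeneous terms follow from a $t$-integration.

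On the Fourier side, \eqref{eq:lin-inv-1} becomes a $4 \times 4$ linear ODE whose characteristic polynomial is the $\mu = \mu' = 0$ specialization of \eqref{egn-ply}, namely
\begin{equation}
    \lambda^4 + \Bigl(\tfrac{|\xi|^2}{\varepsilon^2} + \Omega^2\Bigr) \lambda^2 + \tfrac{\Omega^2 \xi_3^2}{\varepsilon^2} = 0.
\end{equation}
This is biquadratic in $\lambda$ and therefore factors explicitly, yielding four purely imaginary eigenvalues $\lambda = \pm i \omega_\pm(\xi)$ with
\begin{equation}
    \omega_\pm(\xi)^2 = \tfrac{1}{2} \biggl[ \tfrac{|\xi|^2}{\varepsilon^2} + \Omega^2 \pm \sqrt{\Bigl(\tfrac{|\xi|^2}{\varepsilon^2} - \Omega^2\Bigr)^2 + \tfrac{4 \Omega^2 |\xih|^2}{\varepsilon^2}}\, \biggr].
\end{equation}
Diagonalizing the $4 \times 4$ symbol then expresses the homogeneous solution as a linear combination of four half-wave operators $e^{\pm i t \omega_\pm(D)}$ applied to the initial data, composed with zero-order Fourier multipliers that are bounded on $L^p$ uniformly in $\xi$.

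With this decomposition in hand, the Keel--Tao theorem reduces each Strichartz estimate to a uniform dispersive bound of the form
\begin{equation}
    \bigl\| \mathscr{F}^{-1} [\phi_j(\xi)\, e^{\pm i t \omega_\pm(\xi)}] \bigr\|_{L^\infty_x} \leq C K_j\, |t|^{-\sigma_j},
\end{equation}
with constants $K_j$ and rates $\sigma_j$ to be matched to the claim. The two frequency ranges in the statement correspond to distinct asymptotic behaviors of $\omega_\pm$: for $2^j \leq |\Omega|\varepsilon$ (rotation-dominated) a Taylor expansion yields $\omega_+(\xi) \approx |\Omega| + |\xih|^2/(2 \varepsilon^2 |\Omega|)$ (fast Poincar\'e wave dispersing horizontally) and $\omega_-(\xi) \approx |\xi_3|/\varepsilon + O(|\xi|^2 |\xi_3|/(\varepsilon^3 \Omega^2))$ (slow wave), while for $2^j > |\Omega|\varepsilon$ (acoustic-dominated) one has $\omega_+(\xi) \approx |\xi|/\varepsilon$ (acoustic branch) and $\omega_-(\xi) \approx |\Omega||\xi_3|/|\xi|$ (incompressible inertial wave). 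In each regime I would rescale the frequency variable ($\xi = |\Omega|\varepsilon\, \eta$ and $\xi = \varepsilon^{-1} \eta$ respectively) on the dyadic shell $\supp \phi_j$ to extract the exact powers of $|\Omega|$ and $\varepsilon$ stated in the lemma, and then apply a stationary-phase bound to the rescaled oscillatory integral $\int e^{i(x \cdot \xi + t \omega_\pm(\xi))} \phi_j(\xi)\, d\xi$.

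The main technical obstacle is the stationary-phase analysis when the phases $\omega_\pm$ are anisotropic and $\nabla_\xi^2 \omega_\pm$ degenerates in some directions: for instance $\omega_-(\xi) \approx |\xi_3|/\varepsilon$ has a vanishing Hessian at leading order in the low-frequency regime, so the dispersion is carried by the subleading correction whose lower bound on $\supp \phi_j$ must be tracked carefully; and $\omega_-(\xi) \approx |\Omega| |\xi_3|/|\xi|$ in the high-frequency regime has rank only $2$ with a non-uniform lower bound depending on the angle $\xi_3/|\xi|$. I would handle these degeneracies following the scheme developed for the incompressible Coriolis system by Iwabuchi--Takada \cite{Iw-Ta-15} and Koh--Lee--Takada \cite{Ko-Le-Ta-14-1}: freeze $\xi_3$, apply a two-dimensional stationary phase in the horizontal variables $\xih$ where a full-rank Hessian is available, and then treat the remaining $\xi_3$-integral by a van der Corput--type argument. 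Assembling the resulting dispersive bounds for the four half-wave operators via the $TT^*$ machinery yields the two estimates of the lemma.
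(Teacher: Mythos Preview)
Your overall strategy---Duhamel reduction, Fourier diagonalization into four half-wave propagators, then $TT^*$---matches the paper's, but the execution differs in two respects. First, the paper applies the single rescaling $(t,x)\mapsto(t/\Omega,\,x/(\Omega\varepsilon))$ \emph{before} any spectral analysis, which sets $\Omega=\varepsilon=1$ and collapses the two frequency ranges $2^j\lessgtr|\Omega|\varepsilon$ into simply $j\lessgtr 0$; all powers of $|\Omega|$ and $\varepsilon$ then come out of one scaling identity at the end, so no case-by-case parameter tracking is needed. Second, for the dispersive bound the paper does \emph{not} slice in $\xi_3$ and run a $2$D stationary phase plus van der Corput as you propose. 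Instead it applies the Littman lemma directly to the full $3$D oscillatory integral (a rank-$2$ Hessian in $\mathbb{R}^3$ already gives $|t|^{-1}$ decay), together with a perturbation-stability version (Lemma~\ref{lemm:stab}) that lets one replace each phase $\lambda^\pm$ by its Taylor approximation on the annulus. Your slicing route can be made to work but the paper's is shorter and uniform across the branches.

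One point to watch: you assert that the spectral projections are ``zero-order Fourier multipliers bounded on $L^p$ uniformly''. The eigenvectors of the symbol are not smooth where eigenvalues collide (at $\xi=(0,0,\pm1)$ after rescaling), so $L^p$-boundedness for $p\neq 2$ is delicate on the corresponding dyadic shell. The paper avoids this entirely: since $A(\xi)$ is skew-symmetric its eigenvectors are orthonormal, hence $\sum_{\sigma_1,\sigma_2}\|\Delta_j P_{\sigma_1,\sigma_2}F\|_{L^2}^2=\|\Delta_j F\|_{L^2}^2$, and one proves the scalar Strichartz estimate for each $e^{\pm it\lambda^{\pm}(D)}$ first (via $TT^*$ from the scalar dispersive bound) and sums only at the $L^2$ level. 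You should route through $L^2$ in the same way rather than relying on $L^p$-boundedness of the projections.
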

    Before starting the proof of Lemma \ref{lemm:lin-str-inv}, we make some preparations.
    First, we introduce the following scaling transform:
    \begin{align}\label{inv-scaling}
        \begin{aligned}
        \widetilde{b}(t,x)
        & :=
        b
        \left( \frac{t}{\Omega}, \frac{x}{\Omega \varepsilon} \right), & \qquad
        \widetilde{v}(t,x)
        & :=
        v
        \left( \frac{t}{\Omega}, \frac{x}{\Omega \varepsilon} \right),\\
        \widetilde{f}(t,x)
        & :=
        \frac{1}{\Omega}
        f
        \left( \frac{t}{\Omega}, \frac{x}{\Omega \varepsilon} \right), & \qquad
        \widetilde{g}(t,x)
        & :=
        \frac{1}{\Omega}
        g
        \left( \frac{t}{\Omega}, \frac{x}{\Omega \varepsilon} \right)
        \end{aligned}
    \end{align}
    with $\tb_0(x):=\tb(0,x)$ and $\tv_0(x):=\tv(0,x)$.
    Then, we easily see that
    \begin{align}\label{eq:lin-inv-2}
            \begin{dcases}
            \partial_t \tb + \div \tv = \tf, 
            & 
            t\in \mathbb{R}, x \in \mathbb{R}^3,\\
            \partial_t \tv 
            + 
            e_3 \times \tv  
            +
            \nabla \tb 
            =
            \tg,
            &
            t \in \mathbb{R}, x \in \mathbb{R}^3,\\
            \tb(0,x) = \tb_0(x),\quad \tv(0,x) = \tv_0(x), & x \in \mathbb{R}^3.
            \end{dcases}
    \end{align}
    Applying the Fourier transform to \eqref{eq:lin-inv-2}, we have
    \begin{align}
            \partial_t 
            \begin{pmatrix}
                \widehat{\tb}(t,\xi) \\ \widehat{\tv}(t,\xi)
            \end{pmatrix}
            -
            {A} (\xi)
            \begin{pmatrix}
                \widehat{\tb}(t,\xi) \\ \widehat{\tv}(t,\xi)
            \end{pmatrix}
            =
            \begin{pmatrix}
                \widehat{\tf}(t,\xi) \\ \widehat{\tg}(t,\xi)
            \end{pmatrix},
    \end{align}
    where
    \begin{align}
        A(\xi)
            :=
            \begin{pmatrix}
                0 & -i\xi_1 & -i\xi_2 & -i\xi_3 \\
                -i\xi_1 & 0 & 1 & 0 \\
                -i\xi_2 & -1 & 0 & 0 \\
                -i\xi_3 & 0 & 0 & 0
            \end{pmatrix}.
    \end{align}
    Thus, the solution $(\tb,\tv)$ to \eqref{eq:lin-inv-2} is given by 
    \begin{align}
            \begin{pmatrix}
                \tb(t) \\ \tv(t)
            \end{pmatrix}
            =
            e^{tA(D)}
            \begin{pmatrix}
                \tb_0 \\ \tv_0
            \end{pmatrix}
            +
            \int_0^t
            e^{(t-\tau)A(D)}
            \begin{pmatrix}
                \tf(\tau) \\ \tg(\tau)
            \end{pmatrix}
            d\tau{,}
    \end{align}
    where the evolution group $\{e^{tA(D)} \}_{t \in \mathbb{R}}$ is defined by
    \begin{align}\label{A-group}
            e^{tA(D)}F
            :=
            \mathscr{F}^{-1}
            \left[
            e^{tA(\xi)}
            \widehat{F}(\xi)
            \right]
    \end{align}
    for given function $F : \mathbb{R}^3 \to \mathbb{R}^4$.
    By the direct calculation, the eigenvalues of the matrix $A(\xi)$ are given by
    {
    $\sigma_1 \lambda^{\sigma_2}(\xi)$, $\sigma_1,\sigma_2 \in \{\pm \}$,
    where $\lambda^{\pm}(\xi)$ are given by
    }
    \begin{align}
            \lambda^{\pm}(\xi)
            :={}
            \frac{1}{2}
            \left(
            \sqrt{|\xi|^2 + 2\xi_3 +1}
            \pm
            \sqrt{|\xi|^2 - 2\xi_3 +1}
            \right)
            ={}
            \frac{1}{2}
            \left(
            \eta^+(\xi) \pm \eta^{-}(\xi)
            \right)
    \end{align}
    and $\eta^{\pm}(\xi):=\sqrt{|\xi|^2 \pm 2\xi_3 +1}$.
    Let $a_{\sigma_1,\sigma_2}(\xi)$ be the eigenvector corresponding to the eigenvalue $\sigma_1 \lambda^{\sigma_2}(\xi)$ for $\sigma_1,\sigma_2 \in \{\pm \}$.
    Here, we note that the eigenvectors $\{a_{\sigma_1,\sigma_2}(\xi)\}_{\sigma_1,\sigma_2\in\{\pm\}}$ can be chosen as an orthonormal basis of $\mathbb{C}^4$ since $A(\xi)$ is skew symmetric.
    Then, we have
    \begin{align}\label{sol-form}
            e^{tA(D)}F
            &
            =
            \mathscr{F}^{-1}
            \left[
            e^{tA(\xi)}
            \widehat{F}(\xi)
            \right]\\
            & 
            =
            \sum_{\sigma_1,\sigma_2 \in \{\pm \}}
            \mathscr{F}^{-1}
            \left[
            e^{\sigma_1 i t\lambda^{\sigma_2}(\xi)}
            \langle
            a_{\sigma_1,\sigma_2}(\xi),
            \widehat{F}(\xi)
            \rangle_{\mathbb{C}^4}
            a_{\sigma_1,\sigma_2}(\xi)
            \right]\\
            &
            =
            \sum_{\sigma_1,\sigma_2 \in \{\pm \}}
            e^{\sigma_1 i t\lambda^{\sigma_2}(D)}
            P_{\sigma_1,\sigma_2}
            F,
    \end{align}
    where we have set
    \begin{align}
        e^{\sigma_1 i t\lambda^{\pm}(D)}f
        :={}&
        \mathscr{F}^{-1}
        \left[
        e^{{\sigma_1} it\lambda^{\pm}(\xi)}
        \widehat{f}(\xi)
        \right]
        \intertext{and}
        P_{\sigma_1,\sigma_2}F
        :={}&
        \mathscr{F}^{-1}
        \left[
            \langle
            a_{\sigma_1,\sigma_2}(\xi),
            \widehat{F}(\xi)
            \rangle_{\mathbb{C}^4}
            a_{\sigma_1,\sigma_2}(\xi)
        \right].\label{P}
    \end{align}
    See \cite{Ng-Sc-18} for the {detailed} calculation {for} the derivation of the solution formula for~\eqref{eq:lin-inv-2}.
    We now focus on the dispersive properties of the evolution group $\left\{e^{it \lambda^{\pm} (D) } \right\}_{t \in \mathbb{R}}${and show the following lemma}.
    \begin{lemm}\label{lemm:disp}
        There exists a positive constant $C$ such that
        \begin{align}
            \left \| \Delta_j e^{i t \lambda^{\pm}(D)} \psi \right \|_{L^{\infty}} 
            \leq{}&
            C 2^{3j} \left( 1 + \min \{2^{3j}, 1 \} | t | \right)^{-1} \| \psi \|_{L^1} 
        \end{align}
        for all $t \in \mathbb{R}$, $j \in \mathbb{Z}$, and $\psi \in L^1(\mathbb{R}^3)$.
    \end{lemm}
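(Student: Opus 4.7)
By Young's convolution inequality, writing $\Delta_j e^{it\lambda^\pm(D)}\psi = K_j^\pm(t,\cdot) * \psi$ with
\[
K_j^\pm(t,x) := (2\pi)^{-3}\int_{\mathbb R^3} e^{i(x\cdot\xi + t\lambda^\pm(\xi))}\phi_j(\xi)\,d\xi,
\]
the lemma reduces to the pointwise bound $|K_j^\pm(t,x)| \leq C\, 2^{3j}(1+\min\{2^{3j},1\}|t|)^{-1}$ uniformly in $x\in\mathbb R^3$. The trivial estimate $|K_j^\pm(t,x)| \leq (2\pi)^{-3}\|\phi_j\|_{L^1_\xi} \lesssim 2^{3j}$ already settles the short-time regime $\min\{2^{3j},1\}|t| \lesssim 1$, so only the complementary long-time regime requires work.

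For large $|t|$, the plan is to carry out a stationary-phase / van der Corput analysis on the oscillatory integral defining $K_j^\pm$, splitting into the high-frequency regime $2^j\gtrsim 1$ and the low-frequency regime $2^j\lesssim 1$. In the high-frequency regime, expanding $\eta^\pm(\xi)=\sqrt{|\xi|^2\pm 2\xi_3+1}$ in inverse powers of $|\xi|$ shows that on the support of $\phi_j$, $\lambda^+(\xi)$ differs from the acoustic phase $|\xi|$ by $O(|\xi|^{-1})$ while $\lambda^-(\xi)$ differs from the degree-zero Coriolis-type phase $\xi_3/|\xi|$ by $O(|\xi|^{-2})$; this places the analysis within reach of the stationary-phase argument for $e^{it\xi_3/|\xi|}$ (used, for instance, by Iwabuchi--Takada), which yields the required $(1+|t|)^{-1}$ decay. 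In the low-frequency regime, the Taylor expansion at $\xi=0$ gives $\lambda^+(\xi) = 1+(\xi_1^2+\xi_2^2)/2 + O(|\xi|^4)$ and $\lambda^-(\xi) = \xi_3-\xi_3(\xi_1^2+\xi_2^2)/2+O(|\xi|^4)$: the leading oscillatory part of $\lambda^+$ is a horizontal 2D Schr\"odinger phase, while that of $\lambda^-$ is a cubic form whose $3\times 3$ Hessian has non-degenerate determinant $\sim \xi_3(\xi_1^2-\xi_2^2)\sim 2^{3j}$ on the dyadic annulus $|\xi|\sim 2^j$. Integrating $\xi_3$ out first and invoking the 2D Schr\"odinger dispersive estimate gives $|K_j^+(t,x)|\lesssim 2^{3j}/|t|$, while the 3D stationary-phase formula delivers $|K_j^-(t,x)|\lesssim |t|^{-3/2}2^{-3j/2}$; both are majorized by the required $|t|^{-1}$ in the regime $|t|\geq 2^{-3j}$.

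The main technical obstacle is the anisotropy and non-homogeneity of $\lambda^\pm$: because the phase is not scale-invariant, one cannot collapse the low- and high-frequency analyses to a single canonical oscillatory integral, and the Hessian $\nabla_\xi^2\lambda^\pm$ changes rank and magnitude across these regimes, vanishing moreover on codimension-one submanifolds (e.g.\ $\xi_1 = \pm\xi_2$ at low frequency). To implement the strategy above rigorously, one introduces an angular partition of unity on the support of $\phi_j$ adapted both to the direction of $\nabla_\xi\lambda^\pm$ and to the critical manifold of the total phase $x\cdot\xi + t\lambda^\pm(\xi)$, uses repeated integration by parts on non-stationary pieces to absorb the loss near the Hessian degeneracies, and applies 1D van der Corput estimates in a coordinate transverse to the critical set. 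Combining the resulting pointwise kernel bound with the trivial short-time bound and invoking Young's inequality then completes the proof.
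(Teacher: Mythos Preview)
Your overall strategy---reduce via Young's inequality, Taylor-expand $\lambda^\pm$ in the high- and low-frequency regimes, and identify the leading phases $|\xi|$, $\xi_3/|\xi|$, $1+\tfrac12|\xi_{\rm h}|^2$, and $\xi_3-\tfrac12\xi_3|\xi_{\rm h}|^2$---matches the paper's proof. There is a minor computational slip: the Hessian determinant of $-\tfrac12\xi_3|\xi_{\rm h}|^2$ is $\xi_3(\xi_1^2+\xi_2^2)$, not $\xi_3(\xi_1^2-\xi_2^2)$, so the degeneracy set is $\{\xi_3=0\}\cup\{\xi_{\rm h}=0\}$ rather than $\{\xi_1=\pm\xi_2\}$. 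More substantively, the paper does not carry out your proposed angular decomposition and van~der~Corput analysis near the rank-deficient set; instead it invokes a perturbative form of the Littman lemma (Lemma~\ref{lemm:stab}): once the leading phase has Hessian of rank $\geq 2$ on $\supp\phi_0$, a uniform $|t|^{-1}$ bound holds for \emph{all} phases within a fixed $C^{d+3}$-neighborhood, and the remainder terms in the expansions are shown to be $o(1)$ in that norm after rescaling. This bypasses any pointwise tracking of the critical manifold.

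There is, however, a genuine gap in your outline. Your dichotomy ``$2^j\gtrsim 1$ versus $2^j\lesssim 1$'' leaves the intermediate frequencies $2^j\sim 1$ to be handled by a direct stationary-phase argument, but $\lambda^\pm(\xi)=\tfrac12(\eta^+(\xi)\pm\eta^-(\xi))$ with $\eta^\pm(\xi)=\sqrt{|\xi_{\rm h}|^2+(\xi_3\pm1)^2}$ fails to be $C^1$ at $\xi=(0,0,\pm 1)$, and these points lie in $\supp\phi_0$. Neither the Littman lemma nor van~der~Corput applies to a non-smooth phase, so the case $j=0$ requires separate work that your proposal does not supply. The paper handles this by introducing a second dyadic decomposition localizing near $(0,0,\pm1)$: one writes $\phi_0(\xi)=\sum_{k,\ell}\phi_0(\xi)\phi_k^+(\xi)\phi_\ell^-(\xi)$ with $\phi_k^\pm$ dyadic cut-offs centered at $(0,0,\mp1)$, translates so that the singularity sits at the origin, Taylor-expands $\widetilde\lambda^\pm(\xi)=\lambda^\pm(\xi_{\rm h},\xi_3+1)$ around $0$ to recover a smooth leading phase $\tfrac12|\xi|$ plus a small remainder, and then applies the perturbative Littman lemma at each scale $k$ before summing. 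Without this step (or something equivalent) the middle-frequency estimate is incomplete.
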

    \begin{rem}
        In \cite{Ng-Sc-18},
        it is proved that 
        for any {$(r, R)$ satisfying} $0<r\ll 1 \ll R<\infty$, 
        there exists a positive constant $C_{r,R}$ such that    
        \begin{align}\label{disp-known}
            \left\| \chi_{r,R}(D) e^{it\lambda^\pm(D)} \psi \right\|_{L^{\infty}}
            \leqslant
            C_{r,R}( 1 + |t| )^{-\frac{1}{2}}\| \psi \|_{L^1}
        \end{align}
        for all $\psi \in L^1(\mathbb{R}^3)$.
        Here, the symbol $\chi_{r,R}(\xi)$ of the Fourier multiplier $\chi_{r,R}(D)$ satisfies
        \begin{align}
            \chi_{r,R} \in C_c^{\infty}(\mathbb{R}^3),\qquad
            \supp \chi_{r,R} \subset \mathcal{D}_{\frac{r}{2},2R},\qquad 
            \chi_{r,R}(\xi)=1 {\rm \ on\ }\mathcal{D}_{r,R},
        \end{align}
        where $\mathcal{D}_{r,R}:=\{\xi \in \mathbb{R}^3\ ;\ |\xih|, |\xi_3|\geqslant r, |\xi|\leqslant R\}$.
        Therefore, Lemma \ref{lemm:disp} improves~\eqref{disp-known} 
        in the {viewpoint} of the decay rate and the choice of the frequency localization operator. 
    \end{rem}
    In order to prove Lemma \ref{lemm:disp}, 
    we prepare a lemma for the decay estimates of oscillatory integrals given by the stationary phase method.
    We first recall {the following result that is well-known as the Littman lemma}.
    \begin{lemm}[\cites{St-Sh-11}]\label{lemm:sta-pha}
	    Let $d\geqslant 2$ be an integer.
	    Let $\varphi \in C_c^{\infty}(\mathbb{R}^d)$ and $p \in C^{\infty}(\mathcal{D};\mathbb{R})$,
	    where $\mathcal{D}$ is a neighborhood of $\supp \varphi$.
	    Assume that $p$ satisfies
	    \begin{equation*}
		    \rank(\nabla^2p(\xi))\geqslant k,\qquad \xi\in \supp \varphi
	    \end{equation*}
	    for some $k\in \{1,...,d\}$.
	    Then, there exists a positive constant $C$ depending on $d$, $\varphi$, and $\{\nabla^{\alpha} p\}_{2\leqslant |\alpha| \leqslant d+3}$ such that
	    \begin{align}\label{osc.rate}
		    \left|
		    \int_{\mathbb{R}^d}
		    e^{ix\cdot \xi}
		    e^{i\tau p(\xi)}\varphi(\xi) d\xi
		    \right|
		    \leqslant
		    C(1+|\tau|)^{-\frac{k}{2}}
	    \end{align}
	    for all $x\in \mathbb{R}^d$ and $\tau \in \mathbb{R}$.
    \end{lemm}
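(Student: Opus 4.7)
The plan is to deduce the bound from the classical stationary/non-stationary phase method. For $|\tau|\leq 1$, the trivial estimate $|I(x,\tau)|\leq \|\varphi\|_{L^1}$ suffices, so I will assume $|\tau|\geq 1$ throughout. Setting $M:=2\sup_{\supp\varphi}|\nabla p|+1$, the analysis splits according to the size of $|x|/|\tau|$.

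In the non-stationary regime $|x|\geq M|\tau|$, the gradient of the full phase $\Phi(\xi):=x\cdot\xi+\tau p(\xi)$ satisfies $|\nabla\Phi(\xi)|\geq |x|/2$ on $\supp\varphi$. Repeated integration by parts against the transport vector field $L:=i|\nabla\Phi|^{-2}\nabla\Phi\cdot\nabla$, which satisfies $Le^{i\Phi}=e^{i\Phi}$, yields $|I(x,\tau)|\leq C_N|x|^{-N}$ for every integer $N\geq 1$, with $C_N$ controlled by derivatives of $p$ of order $\leq N+1$ and of $\varphi$ of order $\leq N$. Choosing $N=\lceil k/2\rceil$ and using $|x|\geq M|\tau|\geq M$ gives $|I|\leq C|\tau|^{-k/2}$.

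In the stationary regime $|x|\leq M|\tau|$, I will use a finite smooth partition of unity $\{\chi_\alpha\}$ subordinate to a cover of $\supp\varphi$ by small balls centered at points $\xi_\alpha$. At each $\xi_\alpha$ the rank condition, combined with an orthogonal change of coordinates, places an invertible $k\times k$ block of $\nabla^2 p$ in the upper-left position, and this block remains non-singular on a sufficiently small neighborhood of $\xi_\alpha$. Writing $\xi=(\xi',\xi'')\in\mathbb{R}^k\times\mathbb{R}^{d-k}$ and introducing the parameter $\zeta:=x/\tau$ (which ranges over the compact set $\{|\zeta|\leq M\}$), I will apply the parametric Morse splitting lemma in the $\xi'$-variable to the family of phases $p_\zeta(\xi):=p(\xi)+\zeta\cdot\xi$ around their critical points $\xi^*(\zeta,\xi'')$ in the $\xi'$-direction, found via the implicit function theorem applied to $\partial_{\xi'}p_\zeta=0$. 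This produces a smooth local diffeomorphism $\xi'=\Psi(y,\xi'';\zeta)$ depending smoothly on $\zeta$ such that
\begin{align}
p_\zeta(\Psi(y,\xi'';\zeta),\xi'')
=
\tfrac{1}{2}\sum_{j=1}^{k}\epsilon_j y_j^2+g(\xi'';\zeta),
\end{align}
with $\epsilon_j\in\{\pm 1\}$ and $g$ smooth in both arguments. Multiplying the transformed phase by $\tau$, the second $y_j$-derivative equals $\tau\epsilon_j$ exactly, so iterated one-dimensional van der Corput in the $y_j$-directions, followed by integration over the compactly supported $\xi''$-variable, yields $|I(x,\tau)|\leq C|\tau|^{-k/2}$ uniformly in $\zeta$.

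The main obstacle is ensuring that the Morse construction remains uniformly valid as $\zeta$ ranges over $\{|\zeta|\leq M\}$, since the critical point $\xi^*(\zeta,\xi'')$ may leave $\supp\chi_\alpha$ for some $\zeta$. This is resolved by refining the partition of unity so that for each $\alpha$, either a smooth family of critical points $\xi^*(\zeta,\xi'')$ exists inside $\supp\chi_\alpha$ for all admissible $\zeta$ (in which case the stationary-phase analysis above applies with uniform constants) or no critical points exist in $\supp\chi_\alpha$ (in which case $|\nabla_{\xi'}p_\zeta|$ is bounded below uniformly, and integration by parts in the $\xi'$-variable at $\tau$-scale gives $|\tau|^{-N}$ decay). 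Tracking the number of derivatives of $p$ used both in the integration by parts and in the parametric Morse splitting construction --- where $\leq d+3$ derivatives of $p$ are needed to control the $\lceil d/2\rceil+1$ derivatives of the diffeomorphism $\Psi$ and its inverse that appear in the $k$-fold van der Corput argument --- yields the stated dependence of $C$ on $\{\nabla^{\alpha}p\}_{2\leq|\alpha|\leq d+3}$.
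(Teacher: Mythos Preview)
The paper does not prove this lemma; it is cited as the classical Littman lemma from Stein--Shakarchi. However, the appendix proof of Lemma~\ref{lemm:stab} (the perturbative version) contains the method the paper has in mind, and it is quite different from yours. There, one first absorbs $x\cdot\xi$ into the phase (harmless since only $\nabla^\alpha p$ with $|\alpha|\geq 2$ enter the constants), then \emph{squares} the integral and writes $|I|^2=\int J(\tau;\eta)\,d\eta$ with $J(\tau;\eta)=\int e^{i\tau(p(\xi+\eta)-p(\xi))}\widetilde\varphi(\xi,\eta)\,d\xi$. Integration by parts against the difference phase, together with the key lower bound $|\nabla_\xi(p(\xi+\eta)-p(\xi))|\geq c|\eta|$ coming from non-degeneracy of the Hessian, gives $|J|\leq C\min(1,|\tau\eta|^{-(d+1)})$, and integrating in $\eta$ yields $|\tau|^{-d/2}$. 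The degenerate case is then obtained by freezing the $d-k$ ``bad'' variables and applying the non-degenerate estimate in the remaining $k$. This $TT^*$-type argument sidesteps both the Morse lemma and the $x$-dependent case split entirely.

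Your route via the stationary/non-stationary dichotomy and a parametric Morse reduction is a legitimate alternative, but one step is not correct as written. You claim the partition in $\xi$ can be refined so that for each $\alpha$, critical points of $p_\zeta$ in $\xi'$ either exist for \emph{all} $|\zeta|\leq M$ or for \emph{none}. This is false: for a fixed small ball in $\xi$, the $\xi'$-critical point moves with $\zeta'$ (since $\partial_{\xi'}p+\zeta'=0$) and will enter and leave $\supp\chi_\alpha$ as $\zeta'$ sweeps a set of diameter $2M$. The fix is either to run the dichotomy for each $\zeta$ separately and observe that the resulting constants are uniform on the compact parameter set, or to note that the Morse change of variables is defined on a $\zeta$-independent neighborhood (because $\partial_{\xi'}^2 p_\zeta=\partial_{\xi'}^2 p$ does not depend on $\zeta$) regardless of whether the critical point lies inside $\supp\chi_\alpha$; when it lies outside, the transformed amplitude is supported away from $y=0$ and integration by parts in $y$ already gives the decay. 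With that correction your argument goes through, though the bookkeeping is heavier than in the squaring approach.
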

    The estimate \eqref{osc.rate} is stable under small $C^{d+3}$-perturbation of the phase function $p$.
    More precisely, the following lemma holds{.}
    \begin{lemm}\label{lemm:stab}
	    Under the same assumption as in Lemma \ref{lemm:sta-pha} there exist positive constants $\epsilon=\epsilon(d,k,\varphi,\nabla^2p)\leqslant 1$ and $C=C(d,k,\varphi,\{\nabla^{\alpha} p\}_{2\leqslant |\alpha| \leqslant d+3})$
        such that
	    \begin{equation}\label{osc:est}
		    \left|
		    \int_{\mathbb{R}^d}
		    e^{ix\cdot \xi}
		    e^{i\tau q(\xi)}\varphi(\xi) d\xi
		    \right|
		    \leqslant
		    C(1+|\tau|)^{-\frac{k}{2}}
	    \end{equation}
	    for all $x\in \mathbb{R}^d$, $\tau \in \mathbb{R}$, and
	    $q\in C^{\infty}(\mathcal{D};\mathbb{R})$ satisfying
	    $\|q-p\|_{C^{d+3}(\supp \varphi)}\leqslant \epsilon$.
    \end{lemm}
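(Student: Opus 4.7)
The plan is to reduce \eqref{osc:est} to Lemma~\ref{lemm:sta-pha} applied to the perturbed phase $q$ via a finite partition of unity that quantifies the rank condition; the stability then follows from the openness of the hypotheses of Lemma~\ref{lemm:sta-pha} under $C^{d+3}$-perturbation of the phase.

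First, I would quantify the rank condition on $\supp \varphi$. For each $\xi \in \supp \varphi$, by hypothesis some $k \times k$ submatrix of $\nabla^2 p(\xi)$ has nonzero determinant. By continuity of these minors in $\xi$ and the compactness of $\supp \varphi$, I select a finite open cover $U_1,\dots,U_N$ of $\supp \varphi$, a $k\times k$ minor $M_j$ of $\nabla^2 p$ associated with each $U_j$, and a constant $\delta = \delta(\varphi,p) > 0$ such that
\begin{align}
|M_j(\nabla^2 p(\xi))| \geqslant \delta, \qquad \xi \in U_j.
\end{align}
Let $\{\chi_j\}_{j=1}^N$ be a smooth partition of unity subordinate to $\{U_j\}$ with $\sum_j \chi_j \equiv 1$ on a neighborhood of $\supp \varphi$, and decompose $\varphi = \sum_j \varphi \chi_j$.

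Next, since each $M_j$ is a polynomial in the entries of the Hessian, the bound $\|q - p\|_{C^{d+3}(\supp \varphi)} \leqslant \epsilon$ together with $\epsilon = \epsilon(d, k, \varphi, \nabla^2 p) \leqslant 1$ chosen sufficiently small guarantees
\begin{align}
|M_j(\nabla^2 q(\xi))| \geqslant \delta/2, \qquad \xi \in U_j \cap \supp \varphi,
\end{align}
so $\rank(\nabla^2 q(\xi)) \geqslant k$ on $\supp(\varphi \chi_j)$. Moreover, $\|\nabla^\alpha q\|_{L^\infty(\supp \varphi)} \leqslant \|\nabla^\alpha p\|_{L^\infty(\supp \varphi)} + 1$ for all $2 \leqslant |\alpha| \leqslant d+3$. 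Applying Lemma~\ref{lemm:sta-pha} to the pair $(q, \varphi \chi_j)$ on each piece and summing over $j$ yields \eqref{osc:est}.

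The main obstacle, and the point that makes the above argument nonvacuous, is that the constant produced by Lemma~\ref{lemm:sta-pha} must depend on the phase only through upper bounds on $\{\nabla^\alpha p\}_{2 \leqslant |\alpha| \leqslant d+3}$ and a lower bound on the nondegeneracy quantity (the minor $M_j$ here). Inspecting the standard proof of Littman's lemma --- a reduction to a quadratic normal form via a local $C^{d+3}$-diffeomorphism whose regularity is controlled by these very quantities, followed by iterated integration by parts against the remaining nondegenerate directions --- confirms the required quantitative dependence. Once this point is granted, the constants for each piece are dominated uniformly in $q$ by quantities depending only on the data appearing in the statement, which yields \eqref{osc:est} with the advertised constant.
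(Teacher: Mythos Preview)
Your outline is sound, but it takes a different route from the paper and defers exactly the step that carries all the content. You verify that $q$ satisfies the hypothesis of Lemma~\ref{lemm:sta-pha} on each piece of a partition of unity and then invoke a \emph{quantitative} version of Littman's lemma --- one whose constant depends on the phase only through upper bounds on $\{\nabla^\alpha q\}_{2\le|\alpha|\le d+3}$ and a lower bound on a fixed $k\times k$ minor. That quantitative statement is true and well known, but it is not what Lemma~\ref{lemm:sta-pha} says as written, and you do not prove it; you point to ``inspecting the standard proof.'' So your argument is really a reduction of Lemma~\ref{lemm:stab} to a strengthened form of Lemma~\ref{lemm:sta-pha}.

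The paper instead gives a self-contained proof. It first reduces to $x=0$, then treats the non-degenerate case $k=d$ directly: after localizing $\supp\varphi$ to a small ball, it squares the integral, writes it as $\int_{|\eta|\le 2\delta} J(\tau;\eta)\,d\eta$ with $J(\tau;\eta)=\int e^{i\tau(q(\xi+\eta)-q(\xi))}\widetilde\varphi(\xi,\eta)\,d\xi$, and integrates by parts $N$ times in $\xi$. The invertibility of $\nabla^2 p$, the smallness of $\epsilon$, and the smallness of the localization radius give $|\nabla_\xi(q(\xi+\eta)-q(\xi))|\gtrsim|\eta|$, hence $|J(\tau;\eta)|\lesssim |\tau\eta|^{-N}$; integrating in $\eta$ yields the $|\tau|^{-d/2}$ decay with constants depending only on the stated data. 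For $1\le k\le d-1$, the paper diagonalizes $\nabla^2 p(\xi_0)$, slices in the degenerate variables, and applies the already-proved non-degenerate case in the remaining $k_0\ge k$ directions, tracking that the resulting constant depends continuously on the slice.

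In short: your approach is cleaner if one is willing to cite a quantitative Littman lemma; the paper's approach is longer but fully self-contained and makes the dependence of $\epsilon$ and $C$ on $p$ explicit. If you want your write-up to stand on its own, you would need to actually carry out the ``inspection'' you allude to --- at which point you would essentially be reproducing the paper's $k=d$ argument.
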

    We remark that the non-degenerate case $k=d$ is mentioned in \cite{Le-Ta-17}*{Lemma 3.3}.
    Although the proof of Lemma \ref{lemm:stab} is {similar} to \cite{St-Sh-11}*{Proposition 2.5, p.329}, we shall give a sketch of its proof in Appendix \ref{ses:app} for {the} readers' convenience.

    Under these preparations, we may prove Lemma \ref{lemm:disp}.
    \begin{proof}[Proof of Lemma \ref{lemm:disp}]
        Since it holds
        \begin{align}
            \left[\Delta_j e^{i t \lambda^{\pm}(D)} \psi\right](x)
            =
            \frac{1}{(2\pi)^3}
            \left[ I_j^{\pm}(t,\cdot) * \psi \right](x),
        \end{align}
        where
        \begin{align}
            I_j^{\pm}(t,x)
            :=
            \int_{\mathbb{R}^3}
            e^{i x \cdot \xi}
            e^{it \lambda^{\pm}(\xi)}
            {\phi_j}(\xi) d\xi
            =
            2^{3j}
            \int_{\mathbb{R}^3}
            e^{i ( 2^j x ) \cdot \xi}
            e^{it \lambda^{\pm}( 2^j \xi )}
            {\phi_0}(\xi) d\xi,
        \end{align}
        we see by the Hausdorff-Young inequality that
        \begin{align}\label{pf:disp-1}
            \left\|
            \Delta_j e^{i t \lambda^{\pm}(D)} \psi
            \right\|_{L^{\infty}}
            \leq{}&
            \frac{1}{(2\pi)^3}
            \left\| I_j^{\pm}(t) \right\|_{L^{\infty}}
            \|\psi\|_{L^1}.
        \end{align}
        Thus, it suffices to focus on the estimates of the oscillatory integral $I_j^{\pm}(t,x)$.
        In this proof, we denote the smooth function $R_n(b)$ by the {$n$th} remainder term of the Maclaurin series of $\sqrt{1 + \eta}$, that is,
        \begin{align}\label{taylor}
            \sqrt{1 + \eta}
            =
            \sum_{k=0}^n
            \binom{1/2}{k}
            \eta^k
            +
            R_n(\eta),\qquad
            n=0,1,2,...,\quad \eta>-1.
        \end{align}
    
        \noindent
        {\it Step.1 High frequency analysis.}
        We first consider the estimate for $I_j^+(t)$.
        By \eqref{taylor}, it holds
        \begin{align}
            \lambda^{+}(\xi)
            &={}
            \frac{|\xi|}{2}
            \left(
            \sqrt{1 + \frac{2\xi_3 + 1}{|\xi|^2}}
            +
            \sqrt{1 + \frac{-2\xi_3 + 1}{|\xi|^2}}
            \right)\\
            &={}
            \frac{|\xi|}{2}
            \left(
            1 + R_0\left(\frac{2\xi_3 + 1}{|\xi|^2}\right)
            +
            1 + R_0\left(\frac{-2\xi_3 + 1}{|\xi|^2}\right)
            \right)\\
            &={}
            |\xi|
            +
            \frac{|\xi|}{2}
            \left(
            R_0\left(\frac{2\xi_3 + 1}{|\xi|^2}\right)
            +
            R_0\left(\frac{-2\xi_3 + 1}{|\xi|^2}\right)
            \right)\\
            &=:{}
            |\xi| + R_{\rm high}^+(\xi),
        \end{align}
        which implies 
        \begin{align}
            I_j^+(t,x)
            =
            2^{3j}
            \int_{\mathbb{R}^3}
            e^{i(2^j x) \cdot \xi}
            e^{i 2^j t \left\{|\xi| + 2^{-j}R_{\rm high}^+(2^j \xi ) \right\} }
            {\phi_0}(\xi)
            d\xi.
        \end{align}
        By the direct calculation, we see that
        \begin{align}\label{phase-1}
            \nabla^2( |\xi| )
            =
            \frac{1}{|\xi|^3}
            \begin{pmatrix}
                \xi_2^2 + \xi_3^2 & -\xi_1\xi_2 & -\xi_1\xi_3 \\
                -\xi_1\xi_2 & \xi_1^2 + \xi_3^2 & -\xi_2\xi_3 \\
                -\xi_1\xi_3 & -\xi_2\xi_3 & \xi_1^2 + \xi_3^2
            \end{pmatrix},
            \qquad
            \rank \nabla^2( |\xi| ) \geqslant 2
        \end{align}
        for $\xi \in \supp {\phi_0}$ 
        and
        \begin{align}
            \left\|
            2^{-j}R_{\rm high}^+(2^j\cdot) 
            \right\|_{C^{6}(\supp {\phi_0})}
            ={}&
            2^{-j}
            \sum_{m=0}^6
            2^{mj}
            \| (\nabla^m R^+_{\rm high})(2^{j}\cdot) \|_{L^{\infty}(\supp {\phi_0})}\\
            \leq{}&
            {C}
            2^{-j}
            \sum_{m=0}^6
            2^{mj}
            2^{-(m+2)j}
            =
            C 2^{-3j}.
        \end{align}
        Thus, it follows {from Lemma \ref{lemm:stab}} that 
        there exists a positive integer $j_{\rm high}^+$ such that
        \begin{align}\label{+:high}
            \left\| I_j^+(t) \right\|_{L^{\infty}}
            \leq
            C 2^{3j} ( 1 + 2^j |t| )^{-1}
        \end{align}
        for all $t \in \mathbb{R}$ and $j \geqslant j_{\rm high}^+$.

        Next, we consider the estimate for $I_j^-(t)$.
        We see that 
        \begin{align}
            \lambda^{-}(\xi)
            &={}
            \frac{|\xi|}{2}
            \left(
            \sqrt{1 + \frac{2\xi_3 + 1}{|\xi|^2}}
            {-}
            \sqrt{1 + \frac{-2\xi_3 + 1}{|\xi|^2}}
            \right)\\
            &={}
            \frac{|\xi|}{2}
            \left(
            1 + \frac{2\xi_3 + 1}{2|\xi|^2} + R_1\left(\frac{2\xi_3 + 1}{|\xi|^2}
            \right)
            -
            1 - \frac{-2\xi_3 + 1}{2|\xi|^2} - R_1\left(\frac{-2\xi_3 + 1}{|\xi|^2}
            \right)
            \right)\\
            &={}
            \frac{\xi_3}{|\xi|}
            +
            \frac{|\xi|}{2}
            \left(
            R_1\left(\frac{2\xi_3 + 1}{|\xi|^2}\right)
            -
            R_1\left(\frac{-2\xi_3 + 1}{|\xi|^2}\right)
            \right)\\
            &=:{}
            \frac{\xi_3}{|\xi|} + R_{\rm high}^-(\xi).
        \end{align}
        This yields 
        \begin{align}
            I_j^-(t,x)
            =
            2^{3j}
            \int_{\mathbb{R}^3}
            e^{i(2^j x) \cdot \xi}
            e^{i t \left\{\frac{\xi_3}{|\xi|} + R_{\rm high}^-(2^j \xi ) \right\} }
            {\phi_0}(\xi)
            d\xi.
        \end{align}
        In \cites{Ko-Le-Ta-14-1,Ko-Le-Ta-14-2}, it is calculated that
        \begin{align}
            &\nabla^2 \left( \frac{\xi_3}{|\xi|} \right)
            =
            \frac{1}{|\xi|^5}
            \begin{pmatrix}
                \xi_3(3\xi_1^2 - |\xi|^2) & 3\xi_1\xi_2\xi_3 & \xi_1(3\xi_3^2 - |\xi|^2) \\
                3\xi_1\xi_2\xi_3 & \xi_3(3\xi_2^2 - |\xi|^2) &\xi_2(3\xi_3^2 - |\xi|^2) \\
                \xi_1(3\xi_3^2 - |\xi|^2) & \xi_2(3\xi_3^2 - |\xi|^2) & -3\xi_3(\xi_1^2 + \xi_2^2)
            \end{pmatrix},\\
            &\rank \nabla^2 \left( \frac{\xi_3}{|\xi|} \right) \geqslant 2
        \end{align}
        for $\xi \in \supp {\phi_0}$.
        The direct computation yields
        \begin{align}
            \left\|
            R_{\rm high}^-(2^j\cdot ) 
            \right\|_{C^{6}(\supp {\phi_0})}
            ={}&
            \sum_{m=0}^6
            2^{mj}
            \| (\nabla^m R^-_{\rm high})(2^{j}\cdot) \|_{L^{\infty}(\supp {\phi_0})}\\
            \leq{}&
            C
            \sum_{m=0}^6
            2^{mj}
            2^{-(m+2)j}
            =
            C 2^{-2j}.
        \end{align}
        By Lemma \ref{lemm:stab}, there exists a positive integer $j_{\rm high}^-$ such that
        \begin{align}\label{-:high}
            {\left\| I_j^-(t) \right\|_{L^{\infty}}}
            \leq
            C 2^{3j} ( 1 + |t| )^{-1}
        \end{align}
        for all $t \in \mathbb{R}$ and $j \geqslant j_{\rm high}^-$.
        Hence, it follows {from \eqref{+:high} and \eqref{-:high}} that
        \begin{align}\label{I:high}
            {\left\|I_j^{\pm}(t)\right\|_{L^{\infty}} }
            \leq
            C2^{3j}
            (1+|t|)^{-1}
            =
            C2^{3j}\left(1 + \min\{2^{3j},1 \}|t|\right)^{-1}
        \end{align}
        for all $t \in \mathbb{R}$ and $j \geqslant j_{\rm high}:= \max\{j_{\rm high}^+,j_{\rm high}^-\}$.
    
        \noindent
        {\it Step.2 Low frequency analysis.}
        We consider the estimate for $I_j^+(t)$.
        Using \eqref{taylor}, we have
        \begin{align}
            \lambda^+(\xi)
            ={}&
            \frac{1}{2}
            \left(
            1 + \frac{1}{2}( |\xi|^2 + 2\xi_3 ) - \frac{1}{8} ( |\xi|^2 + 2\xi_3 )^2 + R_2( |\xi|^2 + 2\xi_3 )\right.\\
            &\left.
            \qquad
            +
            1 + \frac{1}{2}( |\xi|^2 - 2\xi_3 ) - \frac{1}{8} ( |\xi|^2 - 2\xi_3 )^2 + R_2( |\xi|^2 - 2\xi_3 )
            \right)\\
            ={}&
            1 + \frac{1}{2}|\xih|^2 - \frac{1}{4} |\xi|^4 + \frac{1}{2}\left(R_2( |\xi|^2 + 2\xi_3 ) + R_2( |\xi|^2 - 2\xi_3 )
            \right)\\
            ={}&
            1 + \frac{1}{2}|\xih|^2 + R_{\rm low}^+(\xi),
        \end{align}
        {
        where $\xih:=(\xi_1,\xi_2)$.
        Thus, we have
        }
        \begin{align}
            I_j^-(t,x)
            ={}&
            2^{3j}
            \int_{\mathbb{R}^3}
            e^{i(2^j x)\cdot \xi}
            e^{it\left\{1 + \frac{1}{2}(2^j|\xih|)^2 + R_{\rm low}^+(2^j\xi)) \right\}}
            {\phi_0}(\xi)
            d\xi\\
            ={}&
            2^{3j}e^{it}
            \int_{\mathbb{R}^3}
            e^{i(2^j x)\cdot \xi}
            e^{i2^{2j}t\left\{\frac{1}{2}|\xih|^2 + 2^{-2j}R_{\rm low}^+(2^j\xi) \right\}}
            {\phi_0}(\xi)
            d\xi.
        \end{align}
        It is easy to see that 
        \begin{align}
            \rank \nabla^2 \left( \frac{1}{2} |\xih|^2 \right)
            =
            \rank 
            \begin{pmatrix}
                1 & 0 & 0 \\
                0 & 1 & 0 \\
                0 & 0 & 0
            \end{pmatrix}
            =2.
        \end{align}
        By the direct computation, we have
        \begin{align}
            \left\| 2^{-2j}R_{\rm low}^+(2^j \cdot ) \right\|_{C^{6}(\supp {\phi_0})}
            ={}&
            2^{-2j}
            \sum_{m=0}^6
            2^{mj}
            \| (\nabla^m R_{\rm low}^+)(2^j\cdot) \|_{L^{\infty}(\supp {\phi_0})}\\
            \leq{}&
            C
            2^{-2j}
            \sum_{m=0}^6
            2^{mj}
            2^{(3-m)j}
            =
            C2^j.
        \end{align}
        Hence, Lemma \ref{lemm:stab} implies that there exists a negative integer $j_{\rm low}^+$ such that 
        \begin{align}\label{+:low}
            \|I_j(t)\|_{L^{\infty}}
            \leq
            C2^{3j}
            (1+2^{2j}|t|)^{-1}
        \end{align}
        for all $t \in \mathbb{R}$ and $j \leq j_{\rm low}^+$.

        We next focus on the estimate of $I_j^-(t)$.
        Since there holds
        \begin{align}
            \lambda^-(\xi)
            ={}&
            \frac{1}{2}
            \left(
            1 + \frac{1}{2}( |\xi|^2 + 2\xi_3 ) - \frac{1}{8} ( |\xi|^2 + 2\xi_3 )^2 + \frac{1}{16} ( |\xi|^2 + 2\xi_3 )^3 + R_3( |\xi|^2 + 2\xi_3 )\right.\\
            &\left.
            -
            1 - \frac{1}{2}( |\xi|^2 - 2\xi_3 ) + \frac{1}{8} ( |\xi|^2 - 2\xi_3 )^2 - \frac{1}{16} ( |\xi|^2 - 2\xi_3 )^3 - R_3( |\xi|^2 - 2\xi_3 )\right)\\
            ={}& 
            \xi_3 - \frac{1}{2}\xi_3|\xih|^2 + \frac{1}{2}
            \left(
            \frac{3}{4}\xi_3|\xi|^4
            + R_3( |\xi|^2 + 2\xi_3 ) - R_3( |\xi|^2 - 2\xi_3 )
            \right)\\
            ={}& 
            \xi_3 - \frac{1}{2}\xi_3|\xih|^2 + R_{\rm low}^-(\xi),
        \end{align}
        we see that
        \begin{align}
            I_j^-(t,x)
            ={}&
            2^{3j}
            \int_{\mathbb{R}^3}
            e^{i (2^jx)\cdot \xi}
            e^{-it\left\{2^j\xi_3 + \frac{1}{2}2^{3j}\xi_3|\xih|^2 - R_{\rm low}^-(2^j\xi) \right\}}
            {\phi_0}(\xi)d\xi\\
            ={}&
            2^{3j}
            \int_{\mathbb{R}^3}
            e^{i 2^j(x_{\rm h},x_3+t)\cdot \xi}
            e^{-i2^{3j}t\left\{\frac{1}{2}\xi_3|\xih|^2 - 2^{-3j}R_{\rm low}^-(2^j\xi) \right\}}
            {\phi_0}(\xi)d\xi.
        \end{align}
        Here, by the direct computation, we have
        \begin{align}
            \rank \nabla^2 \left( \frac{1}{2}\xi_3|\xi|^2 \right)
            =
            \rank
            \begin{pmatrix}
                \xi_3 & 0 & \xi_1 \\
                0 & \xi_3 & \xi_2 \\
                \xi_1 & \xi_2 & 3\xi_3 
            \end{pmatrix}
            \geqslant 2
        \end{align}
        for $\xi \in \supp {\phi_0}$ and
        \begin{align}
            \left\| 2^{-3j}R_{\rm low}^-(2^j \cdot ) \right\|_{C^{6}(\supp {\phi_0})}
            ={}&
            2^{-3j}
            \sum_{m=0}^6
            2^{mj}
            \| (\nabla^m R_{\rm low}^-)(2^j\cdot) \|_{L^{\infty}(\supp {\phi_0})}\\
            \leq{}&
            C
            2^{-3j}
            \sum_{m=0}^6
            2^{mj}
            2^{(4-m)j}
            =
            C2^j.
        \end{align}
        Thus, by vitue of Lemma \ref{lemm:stab}, 
        there exists a negative integer $j_{\rm low}^-$ such that
        \begin{align}\label{-:low}
            {\left\|I_j^-(t)\right\|_{L^{\infty}}}
            \leq
            C2^{3j}
            (1+2^{3j}|t|)^{-1}
        \end{align}
        for all $t \in \mathbb{R}$ and $j \leq j_{\rm low}^-$.
        Therefore, it follows {from \eqref{+:low} and \eqref{-:low}} that
        \begin{align}\label{I:low}
            {\left\|I_j^{\pm}(t)\right\|_{L^{\infty}}}
            \leq
            C2^{3j}
            (1+2^{3j}|t|)^{-1}
            =
            C2^{3j}\left(1 + \min\{2^{3j},1 \}|t|\right)^{-1}
        \end{align}
        for all $t \in \mathbb{R}$ and $j \leq j_{\rm low}:= \min\{j_{\rm low}^+,j_{\rm low}^-\}$.
    
        \noindent
        {\it Step.3 Middle frequency analysis.}
        We first consider the case of $j \neq 0$ in which $\eta^{\pm} \neq 0$ and $\lambda^{\pm}$ is smooth on $\supp \phi_j$.
        It follows from the direct computation that 
        \begin{align}
            \nabla^2 \eta^{\pm}(\xi) 
            = 
            \frac{1}{\eta^{\pm}(\xi)}
            \begin{pmatrix}
                1 & 0 & 0 \\
                0 & 1 & 0 \\
                0 & 0 & 1
            \end{pmatrix}
            - 
            \frac{1}{\eta^{\pm}(\xi)^3}
            \begin{pmatrix}
                \xi_1^2 & \xi_1 \xi_2 & \xi_1(\xi_3 \pm 1) \\
                \xi_1 \xi_2 & \xi_2^2 & \xi_2(\xi_3 \pm 1) \\
                \xi_1(\xi_3 \pm 1) & \xi_2(\xi_3 \pm 1) & (\xi_3 \pm 1)^2
            \end{pmatrix}
        \end{align}
        and
        \begin{align}
            \det \left( \nabla^2_{\xi} \lambda^+(\xi) \right)
            ={}&
            \frac{|\xih|^2 ( \eta^+(\xi) + \eta^-(\xi) )}{2\eta^+(\xi)^4\eta^-(\xi)^4},\\
            \det \left( \nabla^2_{\xi} \lambda^-(\xi) \right)
            ={}&
            -
            \frac{2|\xih|^2\xi_3}{\eta^+(\xi)^4\eta^-(\xi)^4( \eta^+(\xi) + \eta^-(\xi) )}.
        \end{align}
        This implies $\det\left( \nabla^2_{\xi} \lambda^+(\xi) \right) \neq 0$ if ${\xih} \neq 0$, and $\det\left( \nabla^2_{\xi} \lambda^-(\xi) \right) \neq 0$ if $\xih \neq 0$ and $\xi_3 \neq 0$.
        Thus, in the case of $\det\left( \nabla^2_{\xi} \lambda^{\pm}(\xi) \right) = 0$, we see that
        \begin{align}
            &
            \nabla^2 \lambda^{\pm} (0,0,\xi_3) 
            = 
            \frac{1}{2}
            \left(
            \frac{1}{|\xi_3 + 1|}
            \pm
            \frac{1}{|\xi_3 - 1|}
            \right)
            \begin{pmatrix}
                1 & 0 & 0 \\
                0 & 1 & 0 \\
                0 & 0 & 0
            \end{pmatrix},\\
            &
            \nabla^2 \lambda^- (\xi_1,\xi_2,0) 
            = 
            -\frac{1}{(|\xih|^2 + 1)^{\frac{3}{2}}}
            \begin{pmatrix}
                0 & 0 & \xi_1 \\
                0 & 0 & \xi_2 \\
                \xi_1 & \xi_2 & 0
            \end{pmatrix}.
        \end{align}
        Hence, we obtain $\rank \left( \nabla^2_{\xi} \lambda^+(\xi) \right) \geqslant 2$ for all $\xi \in \supp \phi_j$
        and Lemma \ref{lemm:sta-pha} implies that there exists a positive constant $M_j$ such that 
        \begin{align}\label{Mj}
            {\left\| I_j^{\pm}(t) \right\|_{L^{\infty}} \leqslant M_j ( 1 + | t | )^{-1}, \qquad j \neq 0}
        \end{align}
        for all $t \in \mathbb R$.
        
        Next, we consider the case of $j =0$.
        Since $\lambda^{\pm}(\xi)$ is not even $C^1$ on $\supp \phi_0$, 
        we cannot apply Lemma \ref{lemm:sta-pha} directly.
        To overcome this, we follow the argument in \cite{Le-Ta-17} and decompose $I_0^{\pm}(t,x)$ as follows:
        \begin{align}\label{I_0-1}
            \begin{split}
            I_0^{\pm}(t,x)
            &={}
            \sum_{k,\ell \in \mathbb{Z}}
            I_{k,\ell}^{\pm}(t,x)\\
            &={}
            \sum_{0 \leqslant k,\ell \leqslant 2}
            I_{k,\ell}^{\pm}(t,x)
            +
            \sum_{\substack{k \leqslant -1 \\ 0 \leqslant \ell \leqslant 2}}
            I_{k,\ell}^{\pm}(t,x)
            +
            \sum_{\substack{0 \leqslant k \leqslant 2 \\ \ell \leqslant -1}}
            I_{k,\ell}^{\pm}(t,x)\\
            &=:
            J_1^{\pm}(t,x)
            +
            J_2^{\pm}(t,x)
            +
            J_3^{\pm}(t,x),
            \end{split}
        \end{align}
        where we have put
        \begin{gather}
            I_{k,\ell}^{\pm}(t,x)
            :=
            \int_{\mathbb R^3} e^{ix \cdot \xi} e^{it\lambda^{\pm}(\xi)} \psi_{k,\ell}(\xi) d\xi,
        \end{gather}
        and
        \begin{gather}
            \psi_{k,\ell}(\xi) 
            :={}
            \phi_0(\xi)
            \phi_k^+(\xi)
            \phi_{\ell}^-(\xi),\\
            \phi_k^+(\xi)
            :={}
            \phi_0(2^{-k}\xih,2^{-k}(\xi_3 - 1)),\qquad 
            \phi_{\ell}^-(\xi)
            :={}
            \phi_0(2^{-\ell}\xih,2^{-\ell}(\xi_3 + 1)).
        \end{gather}
        Here, we have used the fact that $I_{k,\ell}(t) = 0$ if $k \geqslant 2$ or $\ell \geqslant 2$ in the decomposition~\eqref{I_0-1}.
        By the change of variables $(\xi_1,\xi_2,\xi_3) \mapsto (\xi_1,\xi_2,-\xi_3)$, we see that
        \begin{align}\label{J3}
            J_3^{\pm}(t,x_1,x_2,x_3)
            =
            J_2^{\pm}(\pm t,x_1,x_2,-x_3),
        \end{align}
        which implies that it suffices to consider the estimates of $J_1^{\pm}(t,x)$ and $J_2^{\pm}(t,x)$.
        Since $\lambda^{\pm}$ is smooth on $\supp \psi_{k,\ell}$, we see by the same procedure as above in the case $j \neq 0$ that
        \begin{align}\label{I_0-3}
            \left\| I_{k,\ell}^{\pm}(t,\cdot) \right\|_{L^{\infty}}
            \leqslant 
            M_{k,\ell}( 1 + | t | )^{-1}
        \end{align}
        for some positive constant $M_{k,\ell}$,
        which implies
        \begin{align}\label{J1}
            \left\| J_1^{\pm}(t,\cdot) \right\|_{L^{\infty}}
            \leqslant 
            C
            ( 1 + | t | )^{-1},\qquad
            C=\sum_{0\leqslant k, \ell \leqslant 2} M_{k,\ell}.
        \end{align}
        Concerning the estimates for $J_2(t,x)$, we apply the change of variables and see that
        \begin{align}\label{I_0-2}
            &
            \sum_{0 \leqslant \ell \leqslant 2}I_{k,\ell}^{{\pm}} (t,x)
            =
            e^{ix_3}
            \int_{\mathbb R^3}
            e^{ix \cdot \xi}
            e^{it \lambda^{\pm}(\xi)}
            \phi_0(\xi)
            \phi_{0}(2^{-k}\xih,2^{-k}(\xi_3-1))
            \sum_{0 \leqslant \ell \leqslant 2}
            \phi_{\ell}(\xi)
            d\xi\\
            &\quad=
            e^{ix_3}
            2^{3k}
            \int_{\mathbb R^3}
            e^{i2^kx \cdot \xi}
            e^{it \widetilde{\lambda^{\pm}}(2^k\xi)}
            \phi_{0}(2^k\xih,2^k\xi_3+1)
            \phi_0(\xi)
            \sum_{0 \leqslant \ell \leqslant 2}
            \phi_{\ell}(2^k\xih,2^k\xi_3+1)
            d\xi\\
            &\quad=
            e^{ix_3}
            2^{3k}
            \int_{\mathbb R^3}
            e^{i2^kx \cdot \xi}
            e^{it \widetilde{\lambda^{\pm}}(2^k\xi)}
            \phi_0(\xi)
            \chi(2^k\xi)
            d\xi,
        \end{align}
        where we have set
        \begin{align}
            \widetilde{\lambda^{\pm}}(\xi) & :=\lambda^{\pm}(\xi_h,\xi_3+1)=\frac{1}{2}\left( \sqrt{|\xi|^2+4\xi_3+4} \pm |\xi| \right),\\
            \chi(\xi) & :=\phi_{0}(\xih,\xi_3+1)
            \sum_{0 \leqslant \ell \leqslant 2}
            \phi_{\ell}(\xih,\xi_3+1).
        \end{align}
        Taking $L>0$ so that $\supp \chi \subset [-L,L]^3$, we expand $\chi$ as the Fourier series on $[-L,L]^3$:
        \begin{align}\label{I_0-chi-F}
            \chi(\xi)
            =
            \sum_{n \in \mathbb Z^3}
            c_n e^{i \frac{\pi}{L}n \cdot \xi}.
        \end{align}
        Note that, since $\chi$ is smooth, the Fourier coeffidient $\{c_n\}_{n \in \mathbb Z}$ {belongs to $\ell^1(\mathbb{Z}^3)$}.
        By the Taylor theorem, we see that
        \begin{align}\label{Taylor}
            \begin{split}
            \widetilde{\lambda^{\pm}}(\xi)
            &={}
            \sqrt{1 + \xi_3 + \frac{1}{4}|\xi|^2}
            \pm 
            \frac{1}{2}|\xi|\\
            &={}
            1 + \frac{1}{2}\xi_3 \pm \frac{1}{2}|\xi| + \frac{1}{8}|\xi|^2 + R_1\left( \xi_3 + \frac{1}{4}|\xi|^2 \right)\\
            &=:{}
            1 + \frac{1}{2}\xi_3 \pm \frac{1}{2}|\xi| + r(\xi).
            \end{split}
        \end{align}
        Substituting \eqref{I_0-chi-F} and \eqref{Taylor} into \eqref{I_0-2}, we have
        \begin{align}
            \sum_{0 \leqslant \ell \leqslant 2}{I_{k,\ell}^{\pm}}(t,x)
            &
            =
            2^{3k}
            e^{ix_3}
            \sum_{n \in \mathbb Z^3}
            c_n
            \int_{\mathbb R^3}
            e^{i2^k(x+\frac{\pi}{L}n)\cdot\xi} 
            e^{it \widetilde{\lambda^{\pm}}(2^k\xi)}
            \phi_0(\xi)
            d\xi\\
            &
            =
            2^{3k}
            e^{i(x_3+t)}
            \sum_{n \in \mathbb Z^3}
            c_n
            \int_{\mathbb R^3}
            e^{i2^k(x+\frac{\pi}{L}n + \frac{t}{2}e_3)\cdot\xi} 
            e^{i2^kt( \frac{1}{2}|\xi| + 2^{-k}r(2^k\xi))}
            \phi_0(\xi)
            d\xi.
        \end{align}
        {We remember \eqref{phase-1} and see that}
        \begin{align}
            \left\|
            2^{-k}r(2^k\cdot) 
            \right\|_{C^{6}(\supp {\phi_0})}
            ={}&
            2^{-k}
            \sum_{m=0}^6
            2^{mk}
            \| (\nabla^m r)(2^k\cdot) \|_{L^{\infty}(\supp {\phi_0})}\\
            \leq{}&
            C
            2^{-k}
            \sum_{m=0}^6
            2^{mk}
            2^{(2-m)k}
            =
            C 2^k.
        \end{align}
        Then, Lemma \ref{lemm:stab} implies that there exists a negative integer $k_0$ such that
        \begin{align}
            \left|
            \int_{\mathbb R^3}
            e^{i2^k(x+\frac{\pi}{L}n + \frac{t}{2}e_3)\cdot\xi} 
            e^{i2^kt( \frac{1}{2}|\xi| + 2^{-k}r(2^k))}
            \phi_0(\xi)
            d\xi
            \right|
            \leqslant
            C(1+2^k|t|)^{-1}
        \end{align}
        for all $k\leqslant k_0$,
        which yields
        \begin{align}\label{I_0-4}
            \left|
            \sum_{0 \leqslant \ell \leqslant 2}I_{k,\ell}^{\pm}(t,x)
            \right|
            \leqslant{}
            C\sum_{n \in \mathbb Z^3}|c_n|2^{3k}(1+2^k|t|)^{-1}
            \leqslant{}
            C2^{3k}(1+2^k|t|)^{-1}.
        \end{align}
        It follows {from \eqref{I_0-3} and \eqref{I_0-4}} that 
        \begin{equation}\label{J2}
            \begin{split}
                \| J_2(t,\cdot) \|_{L^{\infty}}
                \leqslant{}&
                \sum_{k \leqslant 0} 
                \left|
                \sum_{0 \leqslant \ell \leqslant 2}I_{k,\ell}^{\pm}(t,x)
                \right|\\
                \leqslant{}&
                C
                \sum_{k \leqslant 0}
                2^{3k}(1+2^k|t|)^{-1}\\
                \leqslant{}&
                C(1+|t|)^{-1}.
            \end{split}        
        \end{equation}
        Gathering {\eqref{I_0-1}, \eqref{J3}, }\eqref{J1}{,} and \eqref{J2}, we {have}
        \begin{align}\label{M0}
            {\left\|I_0^{\pm}(t)\right\|_{L^{\infty}} \leq M_0(1 + |t|)^{-1}}
        \end{align}
        {with some positive constant $M_0$, }for all $t \in \mathbb{R}$.
        Then, for $\min\{j_{\rm low}^+,j_{\rm low}^-\} < j <\max\{j_{\rm high}^+,j_{\rm high}^-\}$, we {obtain by \eqref{Mj} and \eqref{M0} that}
        \begin{align}\label{I:middle}
            \begin{split}
            {\left\|I_j^{\pm}(t)\right\|_{L^{\infty}} }
            \leq{}& M_j(1 + |t|)^{-1}\\
            \leq{}& 2^{-3j}M_j \cdot 2^{3j}\left(1 + \min\{2^{3j},1 \}|t|\right)^{-1}\\
            \leq{}& 
            C2^{3j}\left(1 + \min\{2^{3j},1 \}|t|\right)^{-1}
            \end{split}
        \end{align}
        for all $t \in \mathbb{R}$.

        From \eqref{pf:disp-1}, \eqref{I:high}, \eqref{I:low}, and \eqref{I:middle}, we complete the proof.
    \end{proof}
    By vitue of Lemma \ref{lemm:disp} {and the argument in \cite{Ke-Ta-98}}, we establish the Strichartz estimates for {$\left\{e^{it\lambda_{\pm}(D)} \right\}_{t \in \mathbb{R}}$}. 
    \begin{lemm}\label{lemm:str}
        Let $2 \leq q,r \leq \infty$ satisfy
        \begin{align}
            \frac{1}{q} + \frac{1}{r} \leq \frac{1}{2}, \qquad
            ( q,r ) \neq ( \infty,2 ).
        \end{align}
        Then, there exists a positive constant $C=C(q,r)$ such that 
        \begin{align}
            { \left\| \Delta_j e^{it\lambda^{\pm}(D)}f \right\|_{L^r(\mathbb{R};L^q)}}
            \leq
            \begin{cases}
                C 2^{3(\frac{1}{2}-\frac{1}{q}-\frac{1}{r})j}\| \Delta_j f \|_{L^2} & (j \leq 0),\\
                C 2^{3(\frac{1}{2}-\frac{1}{q})j}\| \Delta_j f \|_{L^2} & (j > 0)
            \end{cases}
        \end{align}
        for all {$j \in \mathbb{Z}$ and }$f \in \mathscr{S}'(\mathbb{R}^3)$ with $\Delta_jf \in L^2(\mathbb{R}^3)$
    \end{lemm}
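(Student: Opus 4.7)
The plan is to derive this Strichartz estimate by the $TT^{*}$ argument of Keel--Tao \cite{Ke-Ta-98}, taking Lemma \ref{lemm:disp} as the dispersive input and combining it with the unitarity of $e^{it\lambda^\pm(D)}$ on $L^{2}(\mathbb R^{3})$; the latter is immediate because $\lambda^{\pm}(\xi)$ is real-valued, so $\|\Delta_{j}e^{it\lambda^{\pm}(D)}f\|_{L^{2}}=\|\Delta_{j}f\|_{L^{2}}$ for every $t$ and $j$. Setting $T_{j}f:=\Delta_{j}e^{i\,\cdot\,\lambda^{\pm}(D)}f$, the standard duality identity gives $\|T_{j}\|_{L^{2}\to L^{r}(L^{q})}^{2}=\|T_{j}T_{j}^{*}\|_{L^{r'}(L^{q'})\to L^{r}(L^{q})}$, so it suffices to bound the $TT^{*}$ operator with the right $j$-dependent constant.

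Consider first the high-frequency regime $j>0$. Lemma \ref{lemm:disp} then reads $\|\Delta_{j}e^{it\lambda^{\pm}(D)}\psi\|_{L^{\infty}}\leqslant C\,2^{3j}(1+|t|)^{-1}\|\psi\|_{L^{1}}$, and interpolation with $L^{2}$ conservation yields $\|\Delta_{j}e^{it\lambda^{\pm}(D)}\psi\|_{L^{q}}\leqslant C\,2^{3j(1-2/q)}(1+|t|)^{-(1-2/q)}\|\psi\|_{L^{q'}}$. Under the hypothesis $1/q+1/r\leqslant 1/2$, the kernel $(1+|t|)^{-(1-2/q)}$ is an admissible convolution kernel from $L^{r'}$ into $L^{r}$, by sharp Hardy--Littlewood--Sobolev when $1/q+1/r=1/2$ and by Young's inequality when $1/q+1/r<1/2$; the exclusion $(q,r)\neq(\infty,2)$ removes the well-known Keel--Tao endpoint failure. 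This yields $\|T_{j}T_{j}^{*}\|_{L^{r'}(L^{q'})\to L^{r}(L^{q})}\leqslant C\,2^{3j(1-2/q)}$, and taking the square root gives the bound $\|T_{j}f\|_{L^{r}(L^{q})}\leqslant C\,2^{3j(1/2-1/q)}\|\Delta_{j}f\|_{L^{2}}$ stated in the lemma for $j>0$.

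For the low-frequency regime $j\leqslant 0$, Lemma \ref{lemm:disp} gives $\|\Delta_{j}e^{it\lambda^{\pm}(D)}\psi\|_{L^{\infty}}\leqslant C\,2^{3j}(1+2^{3j}|t|)^{-1}\|\psi\|_{L^{1}}$, i.e.\ the dispersive decay is slowed by a factor $2^{3j}$. The cleanest route is a time rescaling: set $V_{j}(s):=\Delta_{j}e^{i\,2^{-3j}s\,\lambda^{\pm}(D)}$, so that $\|V_{j}(s)\|_{L^{1}\to L^{\infty}}\leqslant C\,2^{3j}(1+|s|)^{-1}$, of the same form as the high-frequency case. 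Applying the previous argument to $V_{j}$ in the rescaled time $s$ yields $\|V_{j}(\cdot)f\|_{L^{r}_{s}(L^{q})}\leqslant C\,2^{3j(1/2-1/q)}\|f\|_{L^{2}}$. Reverting to the original time by the substitution $t=2^{-3j}s$ contributes the Jacobian factor $2^{-3j/r}$ in the $L^{r}_{t}$-norm, producing the claimed exponent $C\,2^{3j(1/2-1/q-1/r)}\|\Delta_{j}f\|_{L^{2}}$.

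The main issue is purely bookkeeping: correctly tracking the $j$-dependent constants through interpolation and time rescaling, and verifying Keel--Tao admissibility under our hypothesis. The only genuine endpoint $(q,r)=(\infty,2)$ is removed by assumption, and no input beyond the dispersive estimate of Lemma \ref{lemm:disp} is required, so no genuinely new difficulty arises.
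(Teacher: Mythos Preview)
Your proposal is correct and follows essentially the same $TT^{*}$ route as the paper, using Lemma \ref{lemm:disp} as the dispersive input, interpolation with $L^{2}$ conservation, and Hardy--Littlewood--Sobolev or Young's inequality for the time convolution. The only cosmetic difference is that the paper carries the factor $\min\{2^{3j},1\}$ through uniformly and computes $\bigl\|(1+\min\{2^{3j},1\}|t|)^{-(1-2/q)}\bigr\|_{L^{r/2}}$ directly, whereas you split into $j>0$ and $j\leqslant 0$ and handle the latter by time rescaling; both arrive at the same exponent.
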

    \begin{proof}
        The strategy of the proof is based on the standard $TT^*$ argument (see \cites{Ke-Ta-98}).
        For the readers' convenience, we shall give the proof. 
        It follows from the Plancherel theorem that
        \begin{align}
            \left \| \Delta_j e^{i t \lambda^{\pm}(D)} f \right \|_{L^2}
            =
            C\left \|  e^{i t \lambda^{\pm}(\xi)} {\phi_j}(\xi) \widehat{f}(\xi) \right \|_{L^2_{\xi}}
            =
            \| \Delta_j f \|_{L^2},
        \end{align}
        which implies the case $(q,r)=(2,\infty)$.
        Thus, it suffices to consider the other cases.
        Let $\psi \in \mathscr{S}(\mathbb{R}\times\mathbb{R}^3)$.
        By the Parseval formula for the space variables, it holds that
        \begin{align}\label{pf:str-1}
            \begin{split}
            &\left| \int_{\mathbb{R} \times \mathbb{R}^3} \Delta_j e^{it\lambda^{\pm}(D)}f(x) \overline{\psi(t,x)} dtdx \right|\\
            &\quad={}
            \left| \int_{\mathbb{R}^3} \Delta_j f(x) \overline{\int_{\mathbb{R}}\widetilde{\Delta_j}e^{-it\lambda^{\pm}(D)}\psi(t,x)dt} dx \right|\\
            &\quad\leq{}
            \| \Delta_j f \|_{L^2}
            \left\| \int_{\mathbb{R}} \widetilde{\Delta_j}e^{-it\lambda^{\pm}(D)}\psi(t) dt \right\|_{L^2},
            \end{split}
        \end{align}
        where we have set $\widetilde{\Delta_j}:=\Delta_{j-1} + \Delta_j + \Delta_{j+1}$.
        For the estimate of the second term of the right{-}hand side, we observe by the Parseval formula and the H\"older inequality that
        \begin{align}\label{pf:str-2}
            \begin{split}
            &\left\| \int_{\mathbb{R}} \widetilde{\Delta_j}e^{-it\lambda^{\pm}(D)}\psi(t) dt \right\|_{L^2}^2\\
            &\quad =
            \int_{\mathbb{R}}
            \int_{\mathbb{R}^3}
            \widetilde{\Delta_j}\psi(t,x)
            \overline{\int_{\mathbb{R}}\widetilde{\Delta_j}e^{i(t-s)\lambda^{\pm}(D)}\psi(s,x)ds}
            dxdt\\
            &\quad \leq
            C
            \| \psi \|_{L^{r'}(\mathbb{R};L^{q'})}
            \left\| \int_{\mathbb{R}}\widetilde{\Delta_j}e^{i(t-s)\lambda^{\pm}(D)}\psi(s)ds \right\|_{L^{r}(\mathbb{R};L^{q})}.
            \end{split}
        \end{align}
        Here, by Lemma \ref{lemm:disp}, we have
        \begin{align}\label{pf:str--2}
           \begin{split}
            &
            \left \| \widetilde{\Delta_j} e^{i (t-s) \lambda^{\pm}(D)} \psi(s) \right \|_{L^{\infty}} \\
            &\quad
            \leq{}
            C \sum_{-1\leq m \leq 1} 2^{3(j+m)} \left( 1 + \min \{2^{3(j+m)}, 1 \} | t-s | \right)^{-1} \| \psi(s) \|_{L^1}\\
            &\quad
            \leq{}
            C 2^{3j} \left( 1 + \min \{2^{3j}, 1 \} | t-s | \right)^{-1} \| \psi(s) \|_{L^1}.
           \end{split}
        \end{align}
        By the Plancherel theorem, we see that
        \begin{align}\label{pf:str--1}
            \begin{split}
            \left \| \widetilde{\Delta_j} e^{i t \lambda^{\pm}(D)} \psi(s) \right \|_{L^2}
            ={}&
            C
            \left \| \sum_{-1 \leq m \leq 1} \phi_{j+m}(\xi) e^{i (t-s) \lambda^{\pm}(\xi)} \widehat{\psi}(s,\xi) \right \|_{L^2_{\xi}}\\
            \leq{}&
            C\| \psi(s) \|_{L^2}.
            \end{split}
        \end{align}
        By using the complex interpolation between \eqref{pf:str--2} and \eqref{pf:str--1}, there holds
        \begin{align}\label{pf:str-0}
            &\left \| \widetilde{\Delta_j} e^{i (t-s) \lambda^{\pm}(D)} \psi(s) \right \|_{L^q} \\
            &\quad
            \leq{}
            C 2^{3(1-\frac{2}{q})j} \left( 1 + \min \{2^{3j}, 1 \} | t-s | \right)^{-(1-\frac{2}{q})} \| \psi(s) \|_{L^{q'}}.
        \end{align}
        Thus, we have
        \begin{align}\label{pf:str-3}
            \begin{split}
            &\left\| \int_{\mathbb{R}}\widetilde{\Delta_j}e^{i(t-s)\lambda^{\pm}(D)}\psi(s)ds \right\|_{L^{r}(\mathbb{R};L^{q})}\\
            &\quad\leq{}
            C2^{3(1-\frac{2}{q})j}
            \left\| \int_{\mathbb{R}}
            \left( 1 + \min \{2^{3j},1\}|t-s| \right)^{-(1-\frac{2}{q})}
            \| \psi(s) \|_{L^{q'}}ds \right\|_{L^{r}(\mathbb{R})}.
            \end{split}
        \end{align}
        In the case of $1/q +1/r < 1/2$, we see by the Hausdorff-Young inequality that
        \begin{align}\label{pf:str-4}
            \begin{split}
            &\left\| \int_{\mathbb{R}}\widetilde{\Delta_j}e^{i(t-s)\lambda^{\pm}(D)}\psi(s)ds \right\|_{L^{r}(\mathbb{R};L^{q})}\\
            &\quad \leq
            C2^{3(1-\frac{2}{q})j}\left\| \left( 1 + \min \{2^{3j},1\}|t| \right)^{-(1-\frac{2}{q})} \right\|_{L^{\frac{r}{2}}(\mathbb{R}_t)}
            \| \psi \|_{L^{r'}(\mathbb{R};L^{q'})}\\
            &\quad \leq
            C2^{3(1-\frac{2}{q})j}\left( \min \{2^{3j},1\} \right)^{-\frac{2}{r}}
            \| \psi \|_{L^{r'}(\mathbb{R};L^{q'})}.
            \end{split}
        \end{align}
        In the case of $1/q +1/r = 1/2$ with $(q,r) {\notin \{} (2,\infty),(\infty,2) {\}}$, it follows from the Hardy--Littlewood--Sobolev inequality that
        \begin{align}\label{pf:str-5}
            \begin{split}
            &\left\| \int_{\mathbb{R}}\widetilde{\Delta_j}e^{i(t-s)\lambda^{\pm}(D)}\psi(s)ds \right\|_{L^{r}(\mathbb{R};L^{q})}\\
            &\quad\leq{}
            C2^{3(1-\frac{2}{q})j}\left( \min \{2^{3j},1\} \right)^{-\frac{2}{r}}
            \left\| \int_{\mathbb{R}}
            |t-s|^{-\frac{2}{r}}
            \| \psi(s) \|_{L^{q'}}ds \right\|_{L^{r}(\mathbb{R})}\\
            &\quad \leq
            C
            2^{3(1-\frac{2}{q})j}
            \left( \min \{2^{3j},1\} \right)^{-\frac{2}{r}}
            \| \psi \|_{L^{r'}(\mathbb{R};L^{q'})}.
            \end{split}
        \end{align}
        Gathering  \eqref{pf:str-1}, \eqref{pf:str-2}, \eqref{pf:str-3}, \eqref{pf:str-4}, and \eqref{pf:str-5}, we obtain
        \begin{align}
            { \left\| \Delta_j e^{it\lambda^{\pm}(D)}f \right\|_{L^r(\mathbb{R};L^q)} }
            \leq
            C
            2^{3(\frac{1}{2}-\frac{1}{q})j}
            \left( \min \{2^{3j},1\} \right)^{-\frac{1}{r}}
            \| \Delta_j f \|_{L^2},
        \end{align}
        which completes the proof.
    \end{proof}
    Now, we are ready to prove Lemma \ref{lemm:lin-str-inv}.
    \begin{proof}[Proof of Lemma \ref{lemm:lin-str-inv}]    
        {Let $F:=(\widetilde{b}_0,\widetilde{v}_0)$ and $G:=(\widetilde{f},\widetilde{g})$.}
        It follows from Lemma \ref{lemm:str} that
        \begin{align}
            {\left\| \Delta_je^{itA(D)}F \right\|_{L^r(\mathbb{R};L^q)}}
            &
            \quad
            \leq{}
            \sum_{\sigma_1,\sigma_2 \in \{\pm\}}
            {\left\| \Delta_je^{\sigma_1it \lambda^{\sigma_2}(D)} P_{\sigma_1,\sigma_2}F \right\|_{L^r(\mathbb{R};L^q)}}\\
            &
            \quad
            \leq{}
            C
            2^{3(\frac{1}{2}-\frac{1}{q})j}
            \left( \min \{2^{3j},1\} \right)^{-\frac{1}{r}}
            \sum_{\sigma_1,\sigma_2 \in \{\pm\}}
            \|  \Delta_j P_{\sigma_1,\sigma_2} F \|_{L^2},
        \end{align}
        where $P_{\sigma_1,\sigma_2}$ denote the projection operators defined in \eqref{P}.
        Since
        the eigenvectors $\{a_{\sigma_1,\sigma_2}(\xi)\}_{\sigma_1,\sigma_2 \in \{\pm\}}$
        are the orthonormal basis of $\mathbb{C}^4$, it holds
        \begin{align}
            \sum_{\sigma_1,\sigma_2 \in \{\pm\}}
            \|  \Delta_j P_{\sigma_1,\sigma_2} F \|_{L^2}
            & \leq{}
            2
            \left(
            \sum_{\sigma_1,\sigma_2 \in \{\pm\}}
            \|  \Delta_j P_{\sigma_1,\sigma_2} F \|_{L^2}^2
            \right)^{\frac{1}{2}}\\
            & ={}
            C
            \left(
            \int_{\mathbb{R}^3}
            \sum_{\sigma_1,\sigma_2 \in \{\pm\}}
            \left|
            \left\langle
            a_{\sigma_1,\sigma_2}(\xi),
            {\phi_j}(\xi)\widehat{F}(\xi)
            \right\rangle_{\mathbb{C}^4}
            \right|^2
            d\xi
            \right)^{\frac{1}{2}}\\
            & ={}
            C
            \left(
            \int_{\mathbb{R}^3}
            \left|{\phi_j}(\xi)\widehat{F}(\xi)\right|^2
            d\xi
            \right)^{\frac{1}{2}} \\
            & =
            C\| \Delta_j F \|_{L^2}.
        \end{align}
        Thus, we have
        \begin{align}\label{hom}
            \left\| \Delta_je^{itA(D)}F \right\|_{L^r(\mathbb{R};L^q)}
            \leq{}
            C
            2^{3(\frac{1}{2}-\frac{1}{q})j}
            \left( \min \{2^{3j},1\} \right)^{-\frac{1}{r}}
            \|  \Delta_j F \|_{L^2}.
        \end{align}
        For the inhomogeneous estimate, since we have
        \begin{align}
            \left\| \Delta_j \int_0^t e^{i(t-\tau)A(D)}G(\tau)d\tau \right\|_{L^q}
            \leqslant{}&
            \int_{\min\{0,t\}}^{\max\{0,t\}} \left\| \Delta_je^{i(t-\tau)A(D)}G(\tau) \right\|_{L^q} d\tau\\
            \leqslant{}&
            \int_{\mathbb{R}} \left\| \Delta_je^{i(t-\tau)A(D)}G(\tau) \right\|_{L^q} d\tau,
        \end{align}
        it follows from \eqref{hom} with $F$ replaced by $G(\tau)$ that
        \begin{align}
            \left\| \Delta_j \int_0^t e^{i(t-\tau)A(D)}G(\tau)d\tau \right\|_{L^r(\mathbb{R};L^q)}
            \leqslant{}&
            \int_{\mathbb{R}} \left\| \Delta_je^{i(t-\tau)A(D)}G(\tau) \right\|_{L^r(\mathbb{R};L^q)} d\tau\\
            ={}&
            \int_{\mathbb{R}} \left\| \Delta_je^{itA(D)}G(\tau) \right\|_{L^r(\mathbb{R};L^q)} d\tau\\
            \leqslant{}&
            C2^{3(\frac{1}{2}-\frac{1}{q})j}
            \left( \min \{2^{3j},1\} \right)^{-\frac{1}{r}}
            \int_{\mathbb{R}} \left\| G(\tau) \right\|_{L^2} d\tau\\
            ={}&
            C2^{3(\frac{1}{2}-\frac{1}{q})j}
            \left( \min \{2^{3j},1\} \right)^{-\frac{1}{r}}
            \| G \|_{L^1(\mathbb{R};L^2)}.
        \end{align}
        Thus, the solution $(\tb,\tv)$ to \eqref{eq:lin-inv-2} satisifes
        \begin{align}
            {\left\| \Delta_j\left(\tb,\tv\right) \right\|_{L^r(\mathbb{R};L^q)}}
            \leq{}&
            C
            2^{3(\frac{1}{2}-\frac{1}{q})j}
            \left( \min \{2^{3j},1\} \right)^{-\frac{1}{r}}
            \|  \Delta_j(\tb_0,\tv_0) \|_{L^2}\\
            &
            +
            C
            2^{3(\frac{1}{2}-\frac{1}{q})j}
            \left( \min \{2^{3j},1\} \right)^{-\frac{1}{r}}
            {
            \|  \Delta_j(\tf,\tg) \|_{L^1(\mathbb{R};L^2)}.}
        \end{align}
        Using the scaling relation \eqref{inv-scaling}, we complete the proof.
    \end{proof}
    Finally, we prove Proposition \ref{prop:lin-str-visc}.
    \begin{proof}[Proof of Proposition \ref{prop:lin-str-visc}]
        Since $(a,u)$ {satisfies} \eqref{eq:lin-inv-1} with $g$ replaced by $g+\mathcal{L}u$, we see that
        \begin{align}\label{pf:prop-str-1}
            \begin{split}
            \| \Delta_j ( a,u ) \|_{L^r( 0,t ; L^q )}
            \leq{}&
            C
            2^{3(\frac{1}{2} - \frac{1}{q} - \frac{1}{r})j}
            |\Omega|^{\frac{2}{r}}
            \varepsilon^{\frac{3}{r}}\\
            &
            \times
            \left(
            \| \Delta_j(a_0, u_0) \|_{L^2}
            +
            \| \Delta_j(f, g + \mathcal{L}u) \|_{L^1( 0,t ; L^2)}
            \right)
            \end{split}
        \end{align}
        for all $j \in \mathbb{Z}$ with $2^j \leq |\Omega| \varepsilon$ and
        \begin{align}\label{pf:prop-str-2}
            \begin{split}
            \| \Delta_j ( a,u ) \|_{L^r( 0,t ; L^q )}
            \leq{}&
            C
            2^{3(\frac{1}{2} - \frac{1}{q})j}
            |\Omega|^{-\frac{1}{r}}\\
            &
            \times
            \left(
            \| \Delta_j(a_0, u_0) \|_{L^2}
            +
            \| \Delta_j(f, g + \mathcal{L}u) \|_{L^1( 0,t ; L^2)}
            \right)
            \end{split}
        \end{align}
        for all $j \in \mathbb{Z}$ with $2^j > |\Omega| \varepsilon$.
        It follows from Lemma \ref{lemm:simple-ene} that 
        \begin{align}\label{pf:prop-str-3}
            \begin{split}
            \| \Delta_j\mathcal{L}u \|_{L^1( 0,t ; L^2)}
            \leqslant{}&
            C2^{2j}T\| u \|_{L^{\infty}(0,t;L^2)}\\
            \leqslant{}&
            C2^{2j}T
            \left(
            \| \Delta_j(a_0, u_0) \|_{L^2}
            +
            \| \Delta_j(f, g ) \|_{L^1( 0,t ; L^2)}
            \right)\\
            \leqslant{}&
            C|\Omega|^2\varepsilon^2T
            \left(
            \| \Delta_j(a_0, u_0) \|_{L^2}
            +
            \| \Delta_j(f, g ) \|_{L^1( 0,t ; L^2)}
            \right)
            \end{split}
        \end{align}
        {for all $j \in \mathbb{Z}$ with $2^j \leqslant |\Omega|\varepsilon$.}
        Lemma \ref{lemm:middle-ene} yields 
        \begin{align}\label{pf:prop-str-3-1}
            \| \Delta_j\mathcal{L}u \|_{L^1( 0,t ; L^2)}
            \leqslant{}&
            C
            \left(
            \| \Delta_j(a_0, u_0) \|_{L^2}
            +
            \| \Delta_j(f, g ) \|_{L^1( 0,t ; L^2)}
            \right)
        \end{align}
        for all $j$ with $2^j \geqslant |\Omega|\varepsilon$.
        By \eqref{pf:prop-str-1}, \eqref{pf:prop-str-2}, \eqref{pf:prop-str-3}, and \eqref{pf:prop-str-3-1}, 
        we obtain \eqref{lin-str-visc-1} and~\eqref{lin-str-visc-2}.

        Next, we prove \eqref{lin-str-visc-3}.
        Let $\widetilde{r} : = (3/2) r$.
        Then, we see by the H\"older inequality for the time integral and \eqref{lin-str-visc-1} that
        \begin{align}\label{pf:prop-str-4}
            \begin{split}
            &\| (a,u) \|_{\widetilde{L^r}( 0,t ; \dB_{q,1}^{\frac{3}{q}-1+\frac{2}{r}} )}^{\ell;|\Omega|\varepsilon}
            ={}
            \| (a,u) \|_{\widetilde{L^{\frac{2}{3}\widetilde{r}}}( 0,t ; \dB_{q,1}^{\frac{3}{q}-1+\frac{3}{\widetilde{r}}} )}^{\ell;|\Omega|\varepsilon}\\
            &\quad 
            \leqslant{}
            T^{\frac{1}{2\widetilde{r}}}
            \| (a,u) \|_{\widetilde{L^{\widetilde{r}}}( 0,t ; \dB_{q,1}^{\frac{3}{q}-1+\frac{3}{\widetilde{r}}} )}^{\ell;|\Omega|\varepsilon}\\
            &\quad 
            \leqslant{}
            CT^{\frac{1}{2\widetilde{r}}}
            |\Omega|^{\frac{2}{\widetilde{r}}}
            \varepsilon^{\frac{3}{\widetilde{r}}}
            \langle  {\Omega^2\varepsilon^2}T \rangle 
            \left(
            \| (a_0,u_0) \|_{\dB_{q,1}^{\frac{1}{2}}}^{\ell;|\Omega|\varepsilon}
            +
            \| (f,g) \|_{L^1(0,t ; \dB_{2,1}^{\frac{1}{2}})}^{\ell;|\Omega|\varepsilon}
            \right)\\
            &\quad 
            \leqslant{}
            C\varepsilon^{\frac{2}{3r}}
            \langle \alpha^2 T \rangle^{2} 
            \left(
            \| (a_0,u_0) \|_{\dB_{q,1}^{\frac{1}{2}}}^{\ell;|\Omega|\varepsilon}
            +
            \| (f,g) \|_{L^1(0,t ; \dB_{2,1}^{\frac{1}{2}})}^{\ell;|\Omega|\varepsilon}
            \right).
            \end{split}
        \end{align}
        It follows from \eqref{lin-str-visc-2} that
        \begin{align}\label{pf:prop-str-5}
            \begin{split}
            &\| (a,u) \|_{\widetilde{L^r}( 0,t ; \dB_{q,1}^{\frac{3}{q}-1+\frac{2}{r}} )}^{m;|\Omega|\varepsilon{,}\alpha}\\
            &\quad
            \leqslant
            C|\Omega|^{-\frac{1}{r}}
            \left(
            \| (a_0,u_0) \|_{\dB_{q,1}^{\frac{1}{2}+\frac{2}{r}}}^{m;|\Omega|\varepsilon,\alpha}
            +
            \| (f,g) \|_{L^1(0,t ; \dB_{2,1}^{\frac{1}{2}+\frac{2}{r}})}^{m;|\Omega|\varepsilon,\alpha}
            \right)\\
            &\quad 
            \leqslant
            C|\Omega|^{-\frac{1}{r}}\alpha^{\frac{2}{r}}
            \left(
            \| (a_0,u_0) \|_{\dB_{q,1}^{\frac{1}{2}}}^{m;|\Omega|\varepsilon,\alpha}
            +
            \| (f,g) \|_{L^1(0,t ; \dB_{2,1}^{\frac{1}{2}})}^{m;|\Omega|\varepsilon,\alpha}
            \right).
            \end{split}
        \end{align}
        Hence, gathering \eqref{pf:prop-str-4} and \eqref{pf:prop-str-5}, we obtain \eqref{lin-str-visc-3} and complete the proof.
    \end{proof}

\section{Local well-posedness on the short time interval}\label{sec:LWP}
    In this section, we prove the local well-posedness of \eqref{eq:NSC-2} on some short time interval.
    \begin{prop}\label{prop:LWP}
        Let $\Omega \in \mathbb{R}$, $\varepsilon>0$, $1 \leq p <6$,
        and
        $(a_0,u_0) \in \dB_{p,1}^{\frac{3}{p}}(\mathbb{R}^3) \times \dB_{p,1}^{\frac{3}{p}-1}(\mathbb{R}^3)^3$ with $\rho_0=1+\varepsilon a_0 >0$.
        Then, 
        there exists a positive time $T_{\Omega,\varepsilon}=T(\mu,\Omega,\varepsilon,a_0,u_0,p)$ such that
        system \eqref{eq:NSC-2} possesses a unique solution $(a,u)$ in the class
        \begin{equation}
        \label{local-class}
            \begin{split}
                &
                a \in {C} ( [0,T_{\Omega,\varepsilon}] ; \dB_{p,1}^{\frac{3}{p}}(\mathbb{R}^3) ),\\
                &
                u \in {C} ( [0,T_{\Omega,\varepsilon}] ; \dB_{p,1}^{\frac{3}{p}-1}(\mathbb{R}^3) )^3 \cap L^1 (0,T_{\Omega,\varepsilon} ; \dB_{p,1}^{\frac{3}{p}+1}(\mathbb{R}^3))^3
            \end{split}
        \end{equation}
        with $\rho = 1 + \varepsilon a >0$ on $[0,T_{\Omega,\varepsilon}] \times \mathbb{R}^3$.
        Moreover, if {$a_0$ satisfies additionally} $a_0 \in \dB_{p,1}^{\frac{3}{p}-1}(\mathbb{R}^3)$, then it holds $ a \in {C}( [0,T_{\Omega,\varepsilon}] ; \dB_{p,1}^{\frac{3}{p}-1}(\mathbb{R}^3) )$.
    \end{prop}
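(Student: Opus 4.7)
The plan is to construct the solution via a Picard iteration based on the linearized system \eqref{eq:lin}, in the spirit of Danchin's local theory for the compressible Navier--Stokes system. I would set $(a^0, u^0)$ to be the solution of \eqref{eq:lin} with $(f,g)=(0,0)$ and data $(a_0,u_0)$, and recursively define $(a^{n+1}, u^{n+1})$ as the solution of \eqref{eq:lin} with forcings
\begin{align}
f^n := -\div(a^n u^n), \qquad g^n := -N_\varepsilon[a^n, u^n],
\end{align}
keeping the Coriolis term $\Omega(e_3 \times u)$ inside the linear operator. Applying Corollary \ref{cor:low-ene} for the low-frequency regime $2^j \leq \beta/\varepsilon$ and Lemma \ref{lemm:high-ene} for the high-frequency regime (with threshold $\beta$ of order $\sqrt{\langle \Omega \varepsilon^2 \rangle}$, which is admissible since $\Omega$ and $\varepsilon$ are fixed here) yields, for any $T>0$, a linear bound of the form
\begin{align}
\|(a^{n+1}, u^{n+1})\|_{E_T} \leq C_{\Omega,\varepsilon,\mu,T}\bigl(\|(a_0,u_0)\|_{X_0} + \|f^n\|_{\widetilde L^1_T \dB_{p,1}^{\frac{3}{p}}} + \|g^n\|_{\widetilde L^1_T \dB_{p,1}^{\frac{3}{p}-1}}\bigr),
\end{align}
where $E_T := \widetilde L^\infty(0,T;\dB_{p,1}^{\frac{3}{p}}) \times \bigl(\widetilde L^\infty(0,T;\dB_{p,1}^{\frac{3}{p}-1}) \cap L^1(0,T;\dB_{p,1}^{\frac{3}{p}+1})\bigr)^3$ and $X_0 := \dB_{p,1}^{\frac{3}{p}} \times \dB_{p,1}^{\frac{3}{p}-1}$.

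For the nonlinear estimates I would use standard paraproduct calculus in homogeneous Besov spaces. The product law $\|a^n u^n\|_{\dB_{p,1}^{\frac{3}{p}}} \lesssim \|a^n\|_{\dB_{p,1}^{\frac{3}{p}}} \|u^n\|_{\dB_{p,1}^{\frac{3}{p}}}$ (valid in the regime $1\leq p<6$ assumed here) controls $f^n$, at the cost of a factor $T$ when integrating in time. The main subtlety in $N_\varepsilon[a^n, u^n]$ is the apparent $\varepsilon^{-1}$ singularity in the pressure nonlinearity $\varepsilon^{-1} K(\varepsilon a)\nabla a$: since $J(0) = 0$ and $K(0) = P'(1) - 1 = 0$, one may write $J(\varepsilon a) = \varepsilon a\, \widetilde J(\varepsilon a)$ and $K(\varepsilon a) = \varepsilon a\, \widetilde K(\varepsilon a)$ with $\widetilde J, \widetilde K$ smooth, so the standard composition estimate in $\dB_{p,1}^{\frac{3}{p}}$ (which requires $\|\varepsilon a\|_{L^\infty}<1$, guaranteed by the embedding $\dB_{p,1}^{\frac{3}{p}} \hookrightarrow L^\infty$ and shrinking $T$) removes the singularity. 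Combined with the product law, interpolation in time, and the convection estimate for $(u\cdot\nabla)u$, one obtains
\begin{align}
\|f^n\|_{\widetilde L^1_T \dB_{p,1}^{\frac{3}{p}}} + \|g^n\|_{\widetilde L^1_T \dB_{p,1}^{\frac{3}{p}-1}} \leq C\, \Phi_T\bigl(\|(a^n, u^n)\|_{E_T}\bigr),
\end{align}
with $\Phi_T$ a continuous superlinear function vanishing at the origin and $\Phi_T \to 0$ as $T \to 0$.

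Thus for $T_{\Omega,\varepsilon}$ small enough, depending on $(\Omega,\varepsilon,\mu,P,a_0,u_0,p)$, the map $(a^n,u^n) \mapsto (a^{n+1},u^{n+1})$ stabilizes a ball in $E_{T_{\Omega,\varepsilon}}$. Following Danchin's standard trick, contraction is established in the weaker space $C([0,T]; \dB_{p,1}^{\frac{3}{p}-1}) \times C([0,T];\dB_{p,1}^{\frac{3}{p}-2})$ with one fewer derivative; the limit then inherits the higher-regularity bound by lower semicontinuity. Positivity $\rho = 1+\varepsilon a > 0$ on the resulting interval follows from the continuity of $a$ into $L^\infty$ and possibly shrinking $T_{\Omega,\varepsilon}$ further. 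The additional regularity $a\in C([0,T_{\Omega,\varepsilon}];\dB_{p,1}^{\frac{3}{p}-1})$ when $a_0\in\dB_{p,1}^{\frac{3}{p}-1}$ is recovered by viewing the density equation as the transport equation $\partial_t a + u\cdot\nabla a = -(a+1/\varepsilon)\div u$ and applying the standard $\dB_{p,1}^{\frac{3}{p}-1}$ transport estimate. The main technical point is precisely the one flagged in the introduction: because Lemma \ref{lemm:middle-ene} is invalid at $\alpha = 0$, the very-low-frequency regime $|\xi| \leq |\Omega|\varepsilon$ must be controlled solely via the basic $L^2$-energy Lemma \ref{lemm:simple-ene}, producing the $\langle \Omega^2\varepsilon^2 T\rangle$ factor of Corollary \ref{cor:low-ene}. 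This factor is harmless for local existence but explains why $T_{\Omega,\varepsilon}$ cannot be chosen independently of $\Omega$ and $\varepsilon$ in this proposition.
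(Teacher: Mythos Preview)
Your overall strategy is a standard Eulerian Picard iteration on the coupled linearized system, which is genuinely different from what the paper does: the paper follows Danchin's Lagrangian approach \cite{Da-14}, transforming to flow coordinates so that the density becomes $\overline\rho = J_u^{-1}\rho_0$ and the velocity satisfies a Lam\'e system with nonconstant coefficients, for which $L^p$-based maximal regularity is available; the Coriolis term is placed on the right-hand side and contributes a harmless $|\Omega|\,T\,\|v\|_{L^\infty_T}$ factor.

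There is, however, a real gap in your proposal. The low-frequency estimate you invoke, Corollary \ref{cor:low-ene}, is proved only in $\dot B_{2,\sigma}^s$; its proof (via Lemma \ref{lemm:middle-ene}) relies on the $L^2$ inner product $\langle \varepsilon\nabla\Delta_j a,\Delta_j u\rangle_{L^2}$ to generate the damping for the density, and this cancellation has no $L^p$ analogue. Consequently your displayed linear bound with $\dot B_{p,1}$ norms on both sides is not justified by the cited lemmas when $p\neq 2$. For $p>2$ one cannot recover the missing low-frequency $\dot B_{p,1}$ control by embedding from $\dot B_{2,1}$, since Bernstein goes the wrong way on the data side. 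This is precisely the obstruction that forces the paper (and \cites{Ch-Mi-Zh-10,Da-14}) to abandon the coupled Eulerian linearization for local theory in general $L^p$ and to use Lagrangian coordinates instead, where the density--velocity coupling is eliminated and only a parabolic Lam\'e estimate is needed.

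A secondary point: in your nonlinear step, the term $J(\varepsilon a^n)\mathcal L u^n$ produces $\|a^n\|_{L^\infty_T\dot B_{p,1}^{3/p}}\|u^n\|_{L^1_T\dot B_{p,1}^{3/p+1}}$ with no explicit factor of $T$, so $\Phi_T\to 0$ as $T\to 0$ is not automatic; one must center the iteration at the free solution $u_L$ and exploit $\|u_L\|_{L^1_T\dot B_{p,1}^{3/p+1}}\to 0$, as the paper does explicitly. Your write-up glosses over this mechanism.
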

    \begin{proof}
    The proof is based on the Lagrangian coordinate approach \cite{Da-14}.
    Since the proof is standard, we only give the outline of the proof briefly. \par
    Let $X = X (t, y)$ be the flow associated with the velocity field $u$ that is
    the solution to
    \begin{align}
    	X (t, y) = y + \int_0^t u (\tau, X (\tau, y)) \,\mathrm{d} \tau.
    \end{align}
    Let $\overline \rho (t, y) = \rho (t, X (t, y))$ and $\overline u (t, y) = u (t, X (t, y))$. Then $\overline \rho = J^{- 1} \rho_0$ with
    $J = \det D X$ and $\overline u$ solves
    \begin{align}
    	L_{\rho_0} (\overline u) = \rho_0^{- 1} \div (I_1 (\overline u, \overline u) + I_{2,\varepsilon} (\overline u))
    	- \Omega (e_3 \times \overline u)
    \end{align}
    with $L_{\rho_0} (u) = \partial_t u - \rho_0^{- 1} \mathcal L u $ and $I_1 (v, w)$ and $I_{2,\varepsilon} (v)$ are some functions satisfying the estimates:
    \begin{align}
    	\lVert I_1 (v, w) \rVert_{L^1 (0, T; \dot B^\frac3p_{p, 1})}
    	& \leqslant C \left(1 + \lVert a_0 \rVert_{\dot B^\frac3p_{p, 1}}\right)
    	\lVert v \rVert_{L^1 (0, T; \dot B^{\frac3p + 1}_{p, 1})} \lVert w \rVert_{L^1 (0, T; \dot B^{\frac3p + 1}_{p, 1})}, \\
    	\lVert I_{2,\varepsilon} (v) \rVert_{L^1 (0, T; \dot B^\frac3p_{p, 1})}
    	& \leqslant C_\varepsilon T \left(1 + \lVert a_0 \rVert_{\dot B^\frac3p_{p, 1}} \right)
    	\left( 1 + \lVert v \rVert_{L^1 (0, T; \dot B^{\frac3p + 1}_{p, 1})} \right), \\
    	\lVert I_1 (v^2, v^2) - I_1 (v^1, v^1) \rVert_{L^1 (0, T; \dot B^\frac3p_{p, 1})}
    	& \leqslant C_{\rho_0} \lVert (v^1, v^2) \rVert_{L^1 (0, T; \dot B^{\frac3p + 1}_{p, 1})}
    	\lVert \delta v \rVert_{L^1 (0, T; \dot B^{\frac3p + 1}_{p, 1})}, \\
    	\lVert I_{2,\varepsilon} (v^2) - I_{2,\varepsilon} (v^1) \rVert_{L^1 (0, T; \dot B^\frac3p_{p, 1})}
    	& \leqslant C_\varepsilon T \left(1 + \lVert a_0 \rVert_{\dot B^\frac3p_{p, 1}} \right)
    	\lVert \delta v \rVert_{L^1 (0, T; \dot B^{\frac3p + 1}_{p, 1})},
    \end{align}
    where we have set $\delta v = v^2 - v^1$, see \cite{Da-14} for the details. \par
    Set
    \begin{align}
    	E_p (T) = \left\{u \in C ([0, T]; \dot B^{\frac3p - 1}_{p, 1}{(\mathbb{R}^3)})\ ; \
    	\partial_t u, \nabla^2 u \in L^1 (0, T; \dot B^{\frac3p - 1}_{p, 1}{(\mathbb{R}^3)}) \right\}
    \end{align}
    and
    \begin{equation}
        \lVert u \rVert_{E_p (T)} := \lVert u \rVert_{L^\infty (0,T; \dot B^{\frac3p - 1}_{p,1} )} 
        + \lVert (\partial_t u, \nabla^2 u) \rVert_{L^1 (0,T; \dot B^{\frac3p - 1}_{p,1} )}.
    \end{equation}
    We first find a fixed point in $E_p (T)$ for the function $\Phi \colon v \mapsto u$ with $u$ the solution to
    \begin{align}
    	\left\{\begin{aligned}
    		L_{\rho_0} (u) & = \rho_0^{- 1} \div (I_1 (v, v) + I_{2,\varepsilon} (v)) - \Omega (e_3 \times v) , & \quad & t > 0, \, x \in \mathbb R^3, \\
    		u (0, x) & = u_0 (x), & \quad & x \in \mathbb R^3,
    	\end{aligned}\right.
    \end{align}
    provided that $T$ is small enough. Introduce $u_L \in E_p (T)$ that is the solution to
    \begin{align}
    	L_1 u_L = 0, \qquad u (0, x) = u_0 (x).
    \end{align}
    In the following, we show that $\Phi$ has a fixed point in some suitable closed ball $\overline B_{E_p (T)} (u_L, R)$.
    Let $v \in \overline B_{E_p (T)} (u_L, R)$ and define $u = \Phi (v)$. Then, $\widetilde u = u - u_L$ solves
    \begin{align}
    	\left\{\begin{aligned}
    		L_{\rho_0} (\widetilde u) & = \rho_0^{- 1} \div I_\varepsilon (v) + (L_1 - L_{\rho_0}) u_L {- \Omega (e_3 \times v)},
    		& \quad & t > 0, \, x \in \mathbb R^3, \\
    		\widetilde u (0, x) & = 0, & \quad & x \in \mathbb R^3
    	\end{aligned}\right.
    \end{align}
    with $I_\varepsilon (v) := I_1 (v, v) + I_{2,\varepsilon} (v)$. By the maximal $L^1$-regularity result for the Lam\'e system with nonconstant coefficients
    (cf. \cite{Da-14}*{Proposition~3.4}), we see that
    \begin{align}
    	\lVert \widetilde u \rVert_{E_p (T)}
    	& \leqslant C e^{C_{\rho_0} T}  \left( \lVert (L_1 - L_{\rho_0}) u_L \rVert_{L^1 (0, T; \dot B^{\frac3p - 1}_{p, 1})}
    	{+ \lvert\Omega\rvert T \lVert v \rVert_{L^\infty (0, T; \dot B^{\frac3p - 1}_{p, 1})} } \right. \\
    	& \quad \left. + \lVert \rho_0^{- 1} \rVert_{\mathcal M (\dot B^{\frac3p - 1}_{p, 1})} \lVert (I_1 (v, v), I_{2,\varepsilon} (v)) \rVert_{L^1 (0, T; \dot B^\frac3p_{p, 1})}
    	\right),
    \end{align}
    where the multiplier norm $\mathcal M (\dot B^s_{p,1})$ for $\dot B^s_{p,1} (\mathbb R^3)$ is defined by
    \begin{equation}
        \lVert f \rVert_{\mathcal M (\dot B^s_{p, 1})} := \sup_{\lVert \psi \rVert_{\dot B^s_{p,1}}\leqslant 1} \lVert \psi f \rVert_{\dot B^s_{p, 1}}.
    \end{equation}
    Notice that we may confirm that $\rho_0^{- 1}$ belongs to $\mathcal M (\dot B^{\frac3p - 1}_{p,1})$ by
    \begin{align}
	\lVert \psi \rho_0^{- 1} \rVert_{\dot B^{\frac3p - 1}_{p, 1}}
	= \left\lVert \psi \bigg(\frac{\varepsilon a_0}{1 + \varepsilon a_0} - 1 \bigg) \right\rVert_{\dot    B^{\frac3p - 1}_{p, 1}}
	\leqslant C_{a_0, \varepsilon} \lVert \psi \rVert_{\dot B^{\frac3p - 1}_{p, 1}} \left(1 + \lVert a_0 \rVert_{\dot B^\frac3p_{p, 1}} \right)
    \end{align}
    as follows from the product and composition estimates. Namely, we observe that
    \begin{equation}
        \lVert \rho_0^{- 1} \rVert_{\mathcal M (\dot B^{\frac3p - 1}_{p, 1})} \leqslant C_{a_0, \varepsilon} \left(1 + \lVert a_0 \rVert_{\dot B^\frac3p_{p, 1}} \right).
    \end{equation}
    We now assume that $T$ and $R$ are chosen such that for a small enough constant $c$ it holds
    \begin{align}
    	\int_0^T \lVert {\nabla} v(t) \rVert_{\dot B^\frac3p_{p, 1}} \,d t \leqslant c.
    \end{align}
    Decompose $v$ into $\widetilde v + u_L$. Since we have
    \begin{align}
    	\lVert (L_1 - L_{\rho_0}) u_L \rVert_{L^1 (0, T; \dot B^{\frac3p - 1}_{p, 1})}
    	\leqslant C \lVert a_0 \rVert_{\dot B^\frac3p_{p, 1}} \left(1 + \lVert a_0 \rVert_{\dot B^\frac3p_{p, 1}}\right)
    	\lVert u_L \rVert_{L^1 (0, T; \dot B^{\frac3p + 1}_{p, 1})},
    \end{align}
    see \cite{Da-14}*{(3.25)}, we arrive at
    \begin{multline}
            \lVert \widetilde u \rVert_{E_p (T)} 
    	\leqslant C_0 e^{C_{\rho_0} T} \left(1 + \lVert a_0 \rVert_{\dot B^\frac3p_{p, 1}}\right)^2
    	\left\{
            \left( T + \lVert a_0 \rVert_{\dot B^\frac3p_{p, 1}} \lVert u_L \rVert_{L^1 (0, T; \dot B^{\frac3p + 1}_{p, 1})} \right) \right. \\
    	\qquad + \left(R + \lVert u_L \rVert_{L^1 (0, T; \dot B^{\frac3p + 1}_{p, 1})} \right)  
            {\lVert u_L \rVert_{L^1 (0, T; \dot B^{\frac3p + 1}_{p, 1})} }
    	+ R^2 \\
    	\qquad \left. + \lvert\Omega\rvert T 
    	{\left(R + \lVert u_L \rVert_{L^\infty (0, T; \dot B^{\frac3p - 1}_{p, 1})} \right)} \right\}
    \end{multline}
    with some positive constant $C_0$ depending on $\lVert a_0 \rVert_{L^\infty}$ and $\varepsilon$ but independent of $T$ as well as $\Omega$.
    Here, we have used the fact that $v \in \overline B_{E_p (T)} (u_L, R)$ implies $\widetilde v \in \overline B_{E_p (T)} (0, R)$.
    We first choose $R$ so that for a small enough constant $\eta \in (0, {1 \slash 7})$,
    \begin{align}
    	{2 C_0} \left(1 + \lVert a_0 \rVert_{\dot B^\frac3p_{p, 1}}\right)^2 R \leqslant \eta,
    \end{align}
    and take $T$ so that
    \begin{gather}
    		C_{\rho_0} T \leqslant \log 2, \quad T \leqslant {R^2}, \quad
    		\lVert a_0 \rVert_{\dot B^\frac3p_{p, 1}} \lVert u_L \rVert_{L^1 (0, T; \dot B^{\frac3p + 1}_{p, 1})} \leqslant R^2, \\
    		\lVert u_L \rVert_{L^\infty (0, T; \dot B^{\frac3p - 1}_{p, 1})\cap L^1 (0, T; \dot B^{\frac3p + 1}_{p, 1})} \leqslant R, \quad
    		{2} \lvert\Omega\rvert T \leqslant {R}.
    \end{gather}
    We then see that $\Phi$ is a self-map on the closed ball $\overline B_{E_p (T)} (u_L, R)$. \par
    To verify that the map $\Phi$ is contractive, we consider two velocity fields $v^1, v^2 \in \overline B_{E_p (T)} (u_L, R)$
    and set $u^\ell = \Phi (v^\ell)$, $\ell = 1, 2$. Let $\delta u = u^2 - u^1$. Then, we have
    \begin{align}
    	L_{\rho_0} \delta u = \rho_0^{- 1} \div ((I_1 (v^2, v^2) - I_1 (v^1, v^1)) + (I_2 (v^2) - I_2 (v^1))) - \Omega e_3 \times ({\delta v}).
    \end{align}
    From the estimates for $I_1$ and $I_2$, we end up with
    \begin{align}
    	\lVert \delta u \rVert_{E_p (T)}
    	& \leqslant {2 C_0} \left\{\left(1 + \lVert a_0 \rVert_{\dot B^\frac3p_{p, 1}}\right)^2 
            \left(T + \lVert (v^1, v^2) \rVert_{L^1 (0, T; \dot B^\frac3p_{p, 1})} \right)
    	\lVert \delta v \rVert_{L^1 (0, T; \dot B^\frac3p_{p, 1} ) } \right. \\
    	& \quad \left. + \lvert\Omega\rvert T \lVert {\delta v} \rVert_{L^\infty (0, T; \dot B^{\frac3p - 1}_{p, 1})} \right\}
    \end{align}
    as follows from $C_{\rho_0} T \leqslant \log 2$. As $v^1, v^2 \in \overline B_{E_p (T)} (u_L, R)$, if we additionally suppose that {$R$} satisfies
    {$R \leqslant \min {\{}1, (14 C_0)^{- 1}{\}}$}
    we observe
    \begin{align}
    	\lVert \delta u \rVert_{E_p (T)} \leqslant \frac12 \lVert \delta v \rVert_{E_p (T)}.
    \end{align}
    This shows that $\Phi$ is contraction mapping, and hence it admits a unique fixed point in $\overline B_{E_p (T)} (u_L, R)$. \par
    The regularity of $a$ follows from
    \begin{align}
    	a = (J_u^{- 1} - 1) a_0 + a_0,
    \end{align}
    where $J_u = \det (D X_u)$. It is known that $J_u^{- 1} - 1$ belongs to $C ([0, T]; \dot B^\frac3p_{p, 1}(\mathbb R^3))$ so that $a$ admits the same regularity as well. In particular, the density is bounded away from 0 on $[0, T]$ due to $\dot B^\frac3p_{p, 1} (\mathbb R^3)\hookrightarrow L^\infty(\mathbb R^3)$ by taking $T$ smaller if necessary. 
    {Furthermore, if we additionally suppose that $a_0 \in \dot B^{\frac3p - 1}_{p,1} (\mathbb R^3)$, it follows that
    \begin{align}
        \lVert a (t) \rVert_{\dot B^{\frac3p - 1}_{p, 1}} & \leqslant
        \lVert (J^{- 1}_u - 1) a_0 \rVert_{\dot B^{\frac3p - 1}_{p, 1}}
        + \lVert a_0 \rVert_{\dot B^{\frac3p - 1}_{p, 1}} \\
        & \leqslant C \left(\lVert J^{- 1}_u - 1 \rVert_{\dot B^\frac3p_{p, 1}} + 1 \right) \lVert a_0 \rVert_{\dot B^{\frac3p - 1}_{p, 1}},
    \end{align}
    since the product maps {from} $\dot B^{\frac3p - 1}_{p,1} (\mathbb R^3) \times \dot B^\frac3p_{p, 1} (\mathbb R^3)$ {to} $\dot B^{\frac3p - 1}_{p,1} (\mathbb R^3)$ whenever $1 \leqslant p < 6$.
    Thus{,} we find that $a$ enjoys $a \in C ([0, T] ; \dot B^{\frac3p - 1}_{p, 1} (\mathbb R^3))$, where the continuity with respect to $t$ follows from the estimate \cite{Da-14}*{(A.20)}.}
    \par
    It remains to verify the uniqueness {of} the solution to \eqref{eq:NSC-2} and continuity of the flow map.
    We now consider two couples $(\rho_0^1, {u^1_0})$ and $(\rho_0^2, {u^2_0})$ of data fulfilling the assumptions of Theorem~\ref{prop:LWP}.
    We denote by $(\rho^1, u^1)$ and $(\rho^2, u^2)$ two solutions in~{\eqref{local-class}} corresponding to the data $(\rho^1_0, u^1_0)$ and $(\rho^2_0, u^2_0)$, respectively.
    Since it holds
    \begin{align}
    	L_{\rho_0^2} u^2 - L_{\rho_0^1} u^1 = L_{\rho^1_0} (\delta u) + (L_{\rho_0^2} - L_{\rho_0^1}) u^2,
    \end{align}
    we see that
    \begin{equation}
    	\label{eq:Lagrange-continuity}
    	\begin{split}
    		L_{\rho_0^1} (\delta u) & = (L_{\rho_0^1} - L_{\rho_0^2}) u^2 + \frac1\varepsilon e_3 \times (\delta u) + (\rho_0^1)^{- 1} \div (I^2 (u^2) - I^2 (u^1)) \\
    		& \quad + ((\rho^2_0)^{- 1} - (\rho_0^1)^{- 1}) \div I^2 (u^2) + (\rho_0^1)^{- 1} \div (I^2 (u^2) - I^1 (u^1)).
    	\end{split}
    \end{equation}
    Here, $I^1$ and $I^2$ correspond to the quantities defined by replacing $\rho_0$ by $\rho_0^1$ and $\rho^2_0$, respectively, in the definition of $I$.
    Then, employing the argument given in \cite{Da-14}*{pp.779{--}780}, we see that
    \begin{align}
    	\lVert \delta u \rVert_{E_p (t)} \leqslant C \left( \lVert \delta u_0 \rVert_{\dot B^{\frac3p - 1}_{p, 1}} + \lVert \delta \rho_0 \rVert_{\dot B^\frac3p_{p, 1}} \right)
    \end{align}
    provided that $\delta \rho_0 := \rho_0^2 - \rho^1_0$, $\delta u_0 :=  u_0^2 - u^1_0$, and $t$ are small enough.
    Concerning the density, we use $\delta a = J_{u^1}^{- 1} \delta a_0 + (J_{u^2}^{- 1} - J^{- 1}_{u^1}) a^2_0$, which yields
    \begin{align}
    	\lVert \delta a (t) \rVert_{\dot B^\frac3p_{p, 1}} \leqslant C \left(1 + \lVert u^1 \rVert_{L^1 (0, t; \dot B^{\frac3p + 1}_{p, 1})}\right)
    	\lVert \delta a_0 \rVert_{\dot B^\frac3p_{p, 1}} \lVert \delta u \rVert_{L^1 (0, t; \dot B^{\frac3p + 1}_{p, 1})}
    \end{align}
    for all $t \in [0, T]$. 
    Hence, the proof is complete.
    \end{proof}
\section{Proof of Theorem \ref{thm}}\label{sec:pf}
   {In this section, we }prove Theorem \ref{thm}.
    We begin with the definition of key norms in our calculus.
    \begin{df}
        Let $\alpha>0$, $0< \varepsilon < \beta_0/\alpha$, and $1 \leq q,r \leq \infty$.
        We first define the norm of initial data by 
        \begin{align}
            \| (a_0,u_0) \|_{D_{\varepsilon}}
            :=
            \| (a_0,u_0) \|_{\dB_{2,1}^{\frac{1}{2}}}^{\ell;\frac{\beta_0}{\varepsilon}}
            +
            \| (\varepsilon a_0,u_0) \|_{\dB_{2,1}^{\frac{3}{2}} \times \dB_{2,1}^{\frac{1}{2}} }^{h;\frac{\beta_0}{\varepsilon}}.
        \end{align}
        Next, we define the energy norm by
        \begin{align}
        \| (a,u) \|_{E_{\varepsilon}(t)}
        :={}&
        \| (a,u) \|_{\widetilde{L^{\infty}}(0,t;\dB_{2,1}^{\frac{1}{2}}) \cap L^1(0,t;\dB_{2,1}^{\frac{5}{2}})}^{\ell;\frac{\beta_0}{\varepsilon}}\\
        &
        +
        \varepsilon
        \| a \|_{{L^{\infty}}(0,t;\dB_{2,1}^{\frac{3}{2}})}^{h;\frac{\beta_0}{\varepsilon}}
        +
        \frac{1}{\varepsilon}
        \| a \|_{L^1(0,t;\dB_{2,1}^{\frac{3}{2}})}^{h;\frac{\beta_0}{\varepsilon}}
        +
        \| u \|_{{L^{\infty}}( 0,t ;\dB_{2,1}^{\frac{1}{2}}) \cap L^1( 0,t ;\dB_{2,1}^{\frac{5}{2}})}^{h;\frac{\beta_0}{\varepsilon}}.
        \end{align}
        We also define the auxiliary norm by
        \begin{align}
        \| (a,u) \|_{A^{q,r}_{\varepsilon,{\alpha}}(t)}
        :={}&
        \| (a,u) \|_{{L^r}(0,t;\dB_{q,1}^{\frac{3}{q}-1+\frac{2}{r}})}^{\ell;{\alpha}}
        +
        \| (a,u) \|_{{L^{\infty}}(0,t;\dB_{q,1}^{\frac{3}{q}-1}) \cap L^1(0,t;\dB_{q,1}^{\frac{3}{q}+1})}^{m;{\alpha}{,}\frac{\beta_0}{\varepsilon}}\\
        &
        +
        \varepsilon
        \| a \|_{{L^{\infty}}(0,t;\dB_{q,1}^{\frac{3}{q}})}^{h;\frac{\beta_0}{\varepsilon}}
        +
        \frac{1}{\varepsilon}
        \| a \|_{L^1(0,t;\dB_{q,1}^{\frac{3}{q}})}^{h;\frac{\beta_0}{\varepsilon}}
        +
        \| u \|_{{L^{\infty}}( 0,t ;\dB_{q,1}^{\frac{3}{q}-1}) \cap L^1( 0,t ;\dB_{q,1}^{\frac{3}{q}+1})}^{h;\frac{\beta_0}{\varepsilon}}
        \end{align}
        {
        and
        \begin{align}
        \mathcal{A}^{q,r}_{\varepsilon,{\alpha}}[a,u](t)
        :={}&
        \alpha \varepsilon \| (a,u) \|_{E_{\varepsilon}(t)} 
        + 
        \| (a,u) \|_{A^{q,r}_{\varepsilon,{\alpha}}(t)} 
        +
        \| (a,u) \|_{E_{\varepsilon}(t)}^{\frac{r-2}{r-1}}
        \| (a,u) \|_{A^{q,r}_{\varepsilon,{\alpha}}(t)}^{\frac{1}{r-1}}.
        \end{align}
        }
    \end{df}
    \begin{rem}
    Here, the roles of $E_{\varepsilon}(t)$-norm, $A_{\varepsilon,{\alpha}}^{q
    ,r}(t)$-norm{, and $\mathcal{A}^{q,r}_{\varepsilon,{\alpha}}[a,u](t)$} are the following:
    The $A_{\varepsilon,{\alpha}}^{q,r}(t)$-norm {and $\mathcal{A}^{q,r}_{\varepsilon,{\alpha}}[a,u](t)$} may be squeezed provided that $\varepsilon$ and $\delta$ are small and $\Omega$ is large {due to the Strichartz estimates established in the previous section}.
    Although a priori estimate for the energy $E_{\varepsilon}(t)$-norm is not small {for large initial data}, this a priori estimate may be closed by using the a priori estimate for {$\mathcal{A}^{q,r}_{\varepsilon,{\alpha}}[a,u](t)$}.
    {This idea is inspired by the first author's previous work \cite{Fu-22}.}
    \end{rem}

    In the next lemma, we prepare the nonlinear estimates in terms of the quantities defined above.
    \begin{lemm}\label{lemm:EA}
        {Let $\varepsilon,{\alpha},\mu >0$ and let $q$, $r$ satisfy 
        \begin{align}
            2 < q < 4,\qquad
            2 < r < \infty,\qquad
            \frac{2}{r} \leqslant \frac{3}{q} - \frac{1}{2}.
        \end{align}}
        Then, there exists a positive constant $C=C({\mu,}q,r)$ such that
        \begin{align}
            &
            \begin{aligned}\label{nonlin-1}
            &
            \| ( \div(au),N_{\varepsilon}[a,u] ) \|_{L^1( 0,t ; \dB_{2,1}^{\frac{1}{2}} )}^{\ell;\frac{\beta_0}{\varepsilon}}\\
            &\quad\leq{}
            {C 
            \mathcal{A}^{q,r}_{\varepsilon,{\alpha}}[a,u](t)
            \| (a,u) \|_{A^{q,r}_{\varepsilon,{\alpha}}(t)}},
            \end{aligned}\\
            &
            \begin{aligned}\label{nonlin-2}
            &
            \varepsilon \sum_{j \in \mathbb Z} 
            2^{\frac{3}{2}j}
            \| [u \cdot \nabla, \Delta_j]a \|_{L^1( 0,t ; L^2)}
            +
            \varepsilon
            \| (\div u)a \|_{L^1( 0,t ; \dB_{2,1}^{\frac{1}{2}} )}\\
            &\quad 
            +
            \varepsilon
            \left\| \| \div u \|_{L^{\infty}}\| a \|_{\dB_{2,1}^{\frac{3}{2}}} \right\|_{L^1(0,t)}
            +
            \| ( \div(au),N_{\varepsilon}[a,u] ) \|_{L^1(0,t;\dB_{2,1}^{\frac{1}{2}})}\\
            &\qquad 
            \leqslant
            {C
            \mathcal{A}^{q,r}_{\varepsilon,{\alpha}}[a,u](t)
            \| (a,u) \|_{E_{\varepsilon}(t)}},
            \end{aligned}\\
            &
            \begin{aligned}\label{nonlin-3}
            &
            \varepsilon \sum_{j \in \mathbb Z} 
            2^{\frac{3}{q}j}
            \| [u \cdot \nabla, \Delta_j]a \|_{L^1( 0,t ; L^q)}
            +
            \varepsilon
            \| (\div u)a \|_{L^1( 0,t ; \dB_{q,1}^{\frac{3}{q}-1} )}\\
            &\quad 
            +
            \varepsilon
            \left\| \| \div u \|_{L^{\infty}}\| a \|_{\dB_{q,1}^{\frac{3}{q}}} \right\|_{L^1(0,t)}
            +
            \| ( \div(au),N_{\varepsilon}[a,u] ) \|_{L^1(0,t;\dB_{q,1}^{\frac{3}{q}-1})}\\
            &\qquad 
            \leqslant
            {C
            \mathcal{A}^{q,r}_{\varepsilon,{\alpha}}[a,u](t)
            \| (a,u) \|_{A^{q,r}_{\varepsilon,{\alpha}}(t)}}
            \end{aligned}
        \end{align}
        for all {$t>0$ and} $(a,u) \in E_{\varepsilon}(t)$ {satisfying 
        \begin{align}
            \varepsilon \| a \|_{L^{\infty}(0,t;L^{\infty}\cap \dB_{q,1}^{\frac{3}{q}})}
            \leqslant 
            \frac{1}{2}.
        \end{align}}
    \end{lemm}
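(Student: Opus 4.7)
The plan is to prove all three estimates by Bony's paraproduct decomposition together with the Danchin commutator estimate and the composition estimate $\|F(\varepsilon a)\|_{\dB_{p,1}^{s}} \leqslant C_F \varepsilon \|a\|_{\dB_{p,1}^{s}}$ for smooth $F$ with $F(0)=0$, which is available here because $J,K$ are smooth with $J(0)=K(0)=0$ and the smallness hypothesis $\varepsilon\|a\|_{L^\infty}\leqslant 1/2$ holds. Each factor $\varepsilon^{-1}$ appearing in $N_{\varepsilon}[a,u]=(u\cdot\nabla)u+J(\varepsilon a)\mathcal{L}u+\varepsilon^{-1}K(\varepsilon a)\nabla a$ and on the left-hand side of \eqref{nonlin-2}--\eqref{nonlin-3} will be absorbed either by the $\varepsilon$ gained from the composition estimate or, on the high-frequency block $2^j>\beta_0/\varepsilon$, by the Bernstein-type gain $1\leqslant \varepsilon 2^j/\beta_0$.

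The heart of the matter is the time integration. For every bilinear piece we use H\"older $1=\frac{1}{r}+\frac{1}{r'}$ with $r'=r/(r-1)$, putting one factor into the Strichartz space $L^r(0,t;\dB_{q,1}^{3/q-1+2/r})$ that is controlled by $\|(a,u)\|_{A^{q,r}_{\varepsilon,\alpha}(t)}$. Since $r>2$, the exponent $r'$ lies in $(1,2)$ and
\begin{align}
\frac{1}{r'}=\frac{r-2}{r-1}\cdot\frac{1}{1}+\frac{1}{r-1}\cdot\frac{1}{r},
\end{align}
so H\"older interpolation in time gives
\begin{align}
\|f\|_{L^{r'}(0,t;X)} \leqslant \|f\|_{L^1(0,t;X)}^{(r-2)/(r-1)} \|f\|_{L^r(0,t;X)}^{1/(r-1)}.
\end{align}
Because $E_\varepsilon(t)$ carries the $L^1_t$ pieces of $(a,u)$ (the maximal-regularity norm $L^1\dB_{2,1}^{5/2}$ for $u$ at low frequency, the high-frequency $\varepsilon^{-1}L^1\dB_{2,1}^{3/2}$ bound for $a$, and their analogues), while $A^{q,r}_{\varepsilon,\alpha}(t)$ carries the $L^r_t$ Strichartz piece, the resulting $L^{r'}$-in-time factor is bounded by $\|(a,u)\|_{E_\varepsilon(t)}^{(r-2)/(r-1)}\|(a,u)\|_{A^{q,r}_{\varepsilon,\alpha}(t)}^{1/(r-1)}$, which is precisely the mixed term in $\mathcal{A}^{q,r}_{\varepsilon,\alpha}[a,u](t)$. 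The index hypotheses $2<q<4$ and $2/r\leqslant 3/q-1/2$ are exactly what is needed to make the paraproduct bookkeeping close, in particular to get the embedding $\dB_{q,1}^{3/q-1+2/r}\hookrightarrow L^{p}$ at the exponent $p$ conjugate to the $\dB_{2,1}^{1/2}$- or $\dB_{q,1}^{3/q-1}$-output in the product estimate $\|fg\|_{\dB^{\cdot}}\lesssim \|f\|_{\dB^{s_1}_{q,1}}\|g\|_{\dB^{s_2}_{p,1}}$ with $s_1+s_2$ equal to the target regularity plus the appropriate dimensional correction.

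The three inequalities then differ only in the choice of output block. For \eqref{nonlin-1}, the output is the low-frequency $L^1(0,t;\dB_{2,1}^{1/2})^{\ell;\beta_0/\varepsilon}$, both bilinear factors are placed against $\|(a,u)\|_{A^{q,r}_{\varepsilon,\alpha}}$, and the mixed term is used only for the $L^{r'}_t$-factor. For \eqref{nonlin-2}, the output contains the full $E_\varepsilon$-type structure (commutator, $(\div u)a$, and $N_{\varepsilon}[a,u]$ combined), one factor from the H\"older split is estimated in $\|(a,u)\|_{E_\varepsilon}$ and the other in $\mathcal{A}^{q,r}_{\varepsilon,\alpha}$; the commutator piece is controlled by the Danchin estimate $\sum_j 2^{(3/2)j}\|[u\cdot\nabla,\Delta_j]a\|_{L^1L^2}\lesssim \|\nabla u\|_{L^1\dB_{2,1}^{3/2}}\|a\|_{L^\infty\dB_{2,1}^{3/2}}$ (plus its analogue split across the $\beta_0/\varepsilon$-cutoff), and the summand $\alpha\varepsilon\|(a,u)\|_{E_\varepsilon}$ inside $\mathcal{A}^{q,r}_{\varepsilon,\alpha}$ serves to upgrade medium-frequency $E_\varepsilon$-estimates at $\alpha\leqslant 2^j\leqslant \beta_0/\varepsilon$ to $A^{q,r}_{\varepsilon,\alpha}$-compatible ones via Bernstein. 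Finally \eqref{nonlin-3} is the $L^q$-analogue of \eqref{nonlin-2}, proved identically with $\dB_{2,1}^{1/2}$ replaced by $\dB_{q,1}^{3/q-1}$.

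The main obstacle will be the frequency accounting: each paraproduct/remainder piece must be routed into the correct output block at the threshold $\beta_0/\varepsilon$, while keeping track of the various $\varepsilon$- and $\varepsilon^{-1}$-weights encoded in $E_\varepsilon$ and $A^{q,r}_{\varepsilon,\alpha}$. The $\alpha\varepsilon\|(a,u)\|_{E_\varepsilon}$ summand in $\mathcal{A}^{q,r}_{\varepsilon,\alpha}$ is what makes this matching consistent at the cutoff $2^j\sim\alpha$ separating the two Strichartz regimes of $A^{q,r}_{\varepsilon,\alpha}$, and so the careful choice of this combined norm (together with the smallness of $\varepsilon\|a\|_{L^\infty}$ ensuring composition estimates are linear in $a$) is what ultimately closes the three bounds.
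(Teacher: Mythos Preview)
Your proposal is correct and follows essentially the same route as the paper. The paper's proof outsources the paraproduct/commutator/composition bookkeeping to \cite{Fu-22}*{Section~4}, obtaining intermediate bounds of the shape
\[
\varepsilon\|a\|_{L^\infty_t\dB_{q,1}^{3/q}}\|u\|_{L^1_t\dB_{q,1}^{3/q+1}}^{h}
+\|(a,u)\|_{L^2_t\dB_{q,1}^{3/q}}^{2}
+\|(a,u)\|_{L^r_t\dB_{q,1}^{3/q-1+2/r}}\bigl(\|a\|^{\ell}_{L^{r'}_t\dB_{q,1}^{3/q-1+2/r'}}+\|u\|_{L^{r'}_t\dB_{q,1}^{3/q-1+2/r'}}\bigr),
\]
and then closes with exactly the interpolation you describe: the $L^{r'}_t$ (and $L^2_t$) factors are interpolated between $L^1_t$ (controlled by $E_\varepsilon$) and $L^r_t$ (controlled by $A^{q,r}_{\varepsilon,\alpha}$), producing the mixed power $E_\varepsilon^{(r-2)/(r-1)}A^{1/(r-1)}$ that sits inside $\mathcal{A}^{q,r}_{\varepsilon,\alpha}$. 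The only cosmetic difference is that the paper also uses the $L^2_t\times L^2_t$ split (each $L^2_t$ factor being interpolated as $A^{r/(2(r-1))}E^{(r-2)/(2(r-1))}$ and then AM--GM'ed into $A+A^{1/(r-1)}E^{(r-2)/(r-1)}$), whereas you phrase everything through $L^r_t\times L^{r'}_t$; the two are equivalent at this level of detail.
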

    \begin{proof}
        We first prove \eqref{nonlin-1}.
        It follows from \cite{Fu-22}*{Section 4} that 
        \begin{align}\label{pf:nonlin-1-1}
            \begin{split}
            &
            \| ( \div(au),N_{\varepsilon}[a,u] ) \|_{L^1( 0,t ; \dB_{2,1}^{\frac{1}{2}} )}^{\ell;\frac{\beta_0}{\varepsilon}}\\
            &\quad 
            \leqslant
            C 
            \varepsilon
            \| a \|_{L^{\infty}( 0,t ; \dB_{q,1}^{\frac{3}{q}})}
            \| u \|_{L^1( 0,t ; \dB_{q,1}^{\frac{3}{q}+1})}^{h;\frac{\beta_0}{\varepsilon}}
            +
            C\| (a,u) \|_{L^2(0,t ; \dB_{q,1}^\frac{3}{q})}^2\\
            &
            \qquad
            +
            C
            \| (a,u) \|_{L^r(0,t;\dB_{q,1}^{\frac{3}{q}-1+\frac{2}{r}})}
            \left(
            \| a \|_{L^{r'}(0,t;\dB_{q,1}^{\frac{3}{q}-1+\frac{2}{r'}})}^{\ell;\frac{4\beta_0}{\varepsilon}}
            +
            \| u \|_{L^{r'}(0,t;\dB_{q,1}^{\frac{3}{q}-1+\frac{2}{r'}})}
            \right).
            \end{split}
        \end{align}
        It follows from the Bernstein inequality and the interpolation inequality that 
        \begin{align}
            &
            \begin{aligned}\label{pf:nonlin-1-2}
            \varepsilon
            \| a \|_{L^{\infty}( 0,t ; \dB_{q,1}^{\frac{3}{q}})}
            \leqslant{}&
            C
            \alpha 
            \varepsilon
            \| a \|_{L^{\infty}( 0,t ; \dB_{2,1}^{\frac{1}{2}})}^{\ell;\alpha}
            +
            \beta_0
            \| a \|_{L^{\infty}( 0,t ; \dB_{q,1}^{\frac{3}{q}-1})}^{m;\alpha,\frac{\beta_0}{\varepsilon}}
            +
            \varepsilon
            \| a \|_{L^{\infty}( 0,t ; \dB_{q,1}^{\frac{3}{q}})}^{h;\frac{\beta_0}{\varepsilon}}\\
            \leqslant{}&
            C
            \left( \alpha \varepsilon \| (a,u) \|_{E_{\varepsilon}(t)} + \| (a,u) \|_{A^{q,r}_{\varepsilon,{\alpha}}(t)} \right),
            \end{aligned}\\
            &
            \| u \|_{L^1( 0,t ; \dB_{q,1}^{\frac{3}{q}+1})}^{h;\frac{\beta_0}{\varepsilon}}
            \leqslant
            \| (a,u) \|_{A^{q,r}_{\varepsilon,{\alpha}}(t)}, \label{pf:nonlin-1-3}
        \end{align}
        and
        \begin{align}\label{pf:nonlin-1-4}
            \begin{split}
            \| (a,u) \|_{L^r( 0,t ; \dB_{q,1}^{\frac{3}{q}-1+\frac{2}{r}} ) }
            \leqslant{}&
            \| (a,u) \|_{L^r( 0,t ; \dB_{q,1}^{\frac{3}{q}-1+\frac{2}{r}} ) }^{\ell;\alpha}
            +    
            \| (a,u) \|_{L^r( 0,t ; \dB_{q,1}^{\frac{3}{q}-1+\frac{2}{r}} ) }^{m;\alpha;\frac{\beta_0}{\varepsilon}}
            \\
            &
            +
            C
            \left( \frac{\beta_0}{\varepsilon} \right)^{-1+\frac{2}{r}}
            \| a \|_{L^r( 0,t ; \dB_{q,1}^{\frac{3}{q}})}^{h;\frac{\beta_0}{\varepsilon}}\\
            &
            {
            +
        \| u \|_{{L^{\infty}}( 0,t ;\dB_{q,1}^{\frac{3}{q}-1}) \cap L^1( 0,t ;\dB_{q,1}^{\frac{3}{q}+1})}^{h;\frac{\beta_0}{\varepsilon}}}\\
            \leqslant{}&
            \| (a,u) \|_{{L^r}( 0,t ; \dB_{q,1}^{\frac{3}{q}-1+\frac{2}{r}} ) }^{\ell;\alpha}
            +    
            \| (a,u) \|_{{L^{\infty}}( 0,t ; \dB_{q,1}^{\frac{3}{q}-1} ) \cap L^1( 0,t ; \dB_{q,1}^{\frac{3}{q}+1} ) }^{m;\alpha;\frac{\beta_0}{\varepsilon}}
            \\
            &
            +
            C
            \varepsilon
            \| a \|_{{L^{\infty}}( 0,t ; \dB_{q,1}^{\frac{3}{q}})}^{h;\frac{\beta_0}{\varepsilon}}
            +
            \frac{C}{\varepsilon}
            \| a \|_{L^1( 0,t ; \dB_{q,1}^{\frac{3}{q}})}^{h;\frac{\beta_0}{\varepsilon}}\\
            &
            {
            +
        \| u \|_{{L^{\infty}}( 0,t ;\dB_{q,1}^{\frac{3}{q}-1}) \cap L^1( 0,t ;\dB_{q,1}^{\frac{3}{q}+1})}^{h;\frac{\beta_0}{\varepsilon}}}\\
            \leqslant{}&
            C
            \| (a,u) \|_{A^{q,r}_{\varepsilon,{\alpha}}(t)}.
            \end{split}
        \end{align}
        Using \eqref{pf:nonlin-1-4}, we have
        \begin{align}\label{pf:nonlin-1-5}
            \begin{split}
            \| (a,u) \|_{L^2( 0,t ; \dB_{2,1}^{\frac{3}{q}} )}
            \leqslant{}&
            \| (a,u) \|_{L^r( 0,t ; \dB_{q,1}^{\frac{3}{q}-1+\frac{2}{r}} ) }^{\frac{r}{2(r-1)}}
            \| (a,u) \|_{L^1( 0,t ; \dB_{q,1}^{\frac{3}{q}+1} ) }^{\frac{r-2}{2(r-1)}}\\
            \leqslant{}&
            C
            \| (a,u) \|_{L^r( 0,t ; \dB_{q,1}^{\frac{3}{q}-1+\frac{2}{r}} ) }^{\frac{r}{2(r-1)}}
            \| (a,u) \|_{L^1( 0,t ; \dB_{2,1}^{\frac{5}{2}} ) }^{\frac{r-2}{2(r-1)}}\\
            \leqslant{}&
            C
            \| (a,u) \|_{A^{q,r}_{\varepsilon,{\alpha}}(t)}^{\frac{r}{2(r-1)}}
            \| (a,u) \|_{E_{\varepsilon}(t)}^{\frac{r-2}{2(r-1)}}
            \\
            \leqslant{}&
            {C
            \| (a,u) \|_{A^{q,r}_{\varepsilon,{\alpha}}(t)}
            +
            C
            \| (a,u) \|_{A^{q,r}_{\varepsilon,{\alpha}}(t)}^{\frac{1}{r-1}}
            \| (a,u) \|_{E_{\varepsilon}(t)}^{\frac{r-2}{r-1}}}
            \end{split}
        \end{align}
        and
        \begin{align}\label{pf:nonlin-1-6}
            \begin{split}
            &
            \| a \|_{L^{r'}(0,t;\dB_{q,1}^{\frac{3}{q}-1+\frac{2}{r'}})}^{\ell;\frac{4\beta_0}{\varepsilon}}
            +
            \| u \|_{L^{r'}(0,t;\dB_{q,1}^{\frac{3}{q}-1+\frac{2}{r'}})}\\
            &\quad 
            \leqslant{}
            \| a \|_{{L^{r'}}(0,t;\dB_{q,1}^{\frac{3}{q}-1+\frac{2}{r'}})}^{\ell;\frac{\beta_0}{\varepsilon}}
            +
            \| u \|_{{L^{r'}}(0,t;\dB_{q,1}^{\frac{3}{q}-1+\frac{2}{r'}})}
            +
            \| a \|_{{L^{r'}}(0,t;\dB_{q,1}^{\frac{3}{q}-1+\frac{2}{r'}})}^{m;\frac{\beta_0}{\varepsilon},\frac{4\beta_0}{\varepsilon}}\\
            &\quad 
            \leqslant{}
            \| a \|_{L^r( 0,t ; \dB_{q,1}^{\frac{3}{q}-1+\frac{2}{r}} ) }^{\frac{1}{r-1}}
            \left(
            \| a \|_{L^1( 0,t ; \dB_{q,1}^{\frac{3}{q}+1} ) }^{\ell;\frac{\beta_0}{\varepsilon}}
            \right)^{\frac{r-2}{r-1}}
            +
            \| u \|_{L^r( 0,t ; \dB_{q,1}^{\frac{3}{q}-1+\frac{2}{r}} ) }^{\frac{1}{r-1}}
            \| u \|_{L^1( 0,t ; \dB_{q,1}^{\frac{3}{q}+1} ) }^{\frac{r-2}{r-1}}\\
            &
            \qquad
            +
            C
            \left( \frac{\beta_0}{\varepsilon} \right)^{-1+\frac{2}{r'}}
            \| a \|_{{L^{r'}}(0,t;\dB_{q,1}^{\frac{3}{q}})}^{h;\frac{\beta_0}{\varepsilon}}\\
            &\quad 
            \leqslant{}
            \| (a,u) \|_{L^r( 0,t ; \dB_{q,1}^{\frac{3}{q}-1+\frac{2}{r}} ) }^{\frac{1}{r-1}}
            \left(
            \| a \|_{L^1( 0,t ; \dB_{2,1}^{\frac{5}{2}} ) }^{\ell;\frac{\beta_0}{\varepsilon}}
            +
            \| u \|_{L^1( 0,t ; \dB_{2,1}^{\frac{5}{2}} ) }
            \right)^{\frac{r-2}{r-1}}\\
            &
            \qquad
            +
            C
            \varepsilon
            \| a \|_{{L^{\infty}}( 0,t ; \dB_{q,1}^{\frac{3}{q}})}^{h;\frac{\beta_0}{\varepsilon}}
            +
            \frac{C}{\varepsilon}
            \| a \|_{L^1( 0,t ; \dB_{q,1}^{\frac{3}{q}})}^{h;\frac{\beta_0}{\varepsilon}}\\
            &\quad 
            \leqslant
            C
            \| (a,u) \|_{A^{q,r}_{\varepsilon,{\alpha}}(t)}^{\frac{1}{r-1}}
            \| (a,u) \|_{E_{\varepsilon}(t)}^{\frac{r-2}{r-1}}
            +
            C
            \| (a,u) \|_{A^{q,r}_{\varepsilon,{\alpha}}(t)}.
            \end{split}
        \end{align}
        Combining 
        \eqref{pf:nonlin-1-1}, 
        \eqref{pf:nonlin-1-2}, 
        \eqref{pf:nonlin-1-3}, 
        \eqref{pf:nonlin-1-4}, 
        \eqref{pf:nonlin-1-5}, and 
        \eqref{pf:nonlin-1-6},
        we obtain \eqref{nonlin-1}.
        
        Next, we show \eqref{nonlin-2}.
        It follows from \cite{Fu-22}*{Section 4} that
        \begin{align}
            &
            \varepsilon \sum_{j \in \mathbb Z} 
            2^{\frac{3}{2}j}
            \| [u \cdot \nabla, \Delta_j]a \|_{L^1( 0,t ; L^2)}
            +
            \varepsilon
            \| (\div u)a \|_{L^1( 0,t ; \dB_{2,1}^{\frac{1}{2}} )}\\
            &+
            \varepsilon
            \left\| \| \div u \|_{L^{\infty}}\| a \|_{\dB_{2,1}^{\frac{3}{2}}} \right\|_{L^1(0,t)}
            +
            \| ( \div(au),N_{\varepsilon}[a,u] ) \|_{L^1(0,t;\dB_{2,1}^{\frac{1}{2}})}\\
            &\quad 
            \leqslant
            C\varepsilon \| a \|_{L^{\infty}( 0,t ; \dB_{2,1}^{\frac{3}{2}})}\| u \|_{L^1( 0,t ; \dB_{q,1}^{\frac{3}{q}+1} )}^{h;\frac{\beta_0}{\varepsilon}}
            +
            C\varepsilon \| a \|_{L^{\infty}( 0,t ; \dB_{q,1}^{\frac{3}{q}})}\| u \|_{L^1( 0,t ; \dB_{2,1}^{\frac{5}{2}} )}^{h;\frac{\beta_0}{\varepsilon}}\\
            &\qquad
            +
            C \| (a,u) \|_{L^2(0,t; \dB_{2,1}^{\frac{3}{2}})} \| (a,u) \|_{L^2(0,t; \dB_{q,1}^{\frac{3}{q}})}.
        \end{align}
        Using 
        \eqref{pf:nonlin-1-2}, 
        \eqref{pf:nonlin-1-3}, 
        \eqref{pf:nonlin-1-5}, 
        and 
        \begin{align}
            &
            \| (a,u) \|_{L^2(0,t; \dB_{2,1}^{\frac{3}{2}})}
            +
            \varepsilon \| a \|_{L^{\infty}( 0,t ; \dB_{2,1}^{\frac{3}{2}})}
            +
            \| u \|_{L^1( 0,t ; \dB_{2,1}^{\frac{5}{2}} )}^{h;\frac{\beta_0}{\varepsilon}}\\
            &\quad 
            \leqslant
            \| (a,u) \|_{L^2(0,t; \dB_{2,1}^{\frac{3}{2}})}^{\ell;\frac{\beta_0}{\varepsilon}}
            +
            \| (a,u) \|_{L^2(0,t; \dB_{2,1}^{\frac{3}{2}})}^{h;\frac{\beta_0}{\varepsilon}}\\
            &\qquad
            +
            \varepsilon \| a \|_{L^{\infty}( 0,t ; \dB_{2,1}^{\frac{3}{2}})}
            +
            \| u \|_{L^1( 0,t ; \dB_{2,1}^{\frac{5}{2}} )}^{h;\frac{\beta_0}{\varepsilon}}\\
            &\quad 
            \leqslant
            C
            \| (a,u) \|_{{L^{\infty}}(0,t; \dB_{2,1}^{\frac{1}{2}}) \cap L^1(0,t; \dB_{2,1}^{\frac{5}{2}})}^{\ell;\frac{\beta_0}{\varepsilon}}\\
            &\qquad
            +
            C\varepsilon
            \| a \|_{{L^{\infty}}(0,t; \dB_{2,1}^{\frac{3}{2}})}^{h;\frac{\beta_0}{\varepsilon}}
            +
            \frac{C}{\varepsilon}
            \| a \|_{L^1(0,t; \dB_{2,1}^{\frac{3}{2}})}^{h;\frac{\beta_0}{\varepsilon}}
            +
            C
            \| u \|_{{L^{\infty}}(0,t; \dB_{2,1}^{\frac{1}{2}}) \cap L^1(0,t; \dB_{2,1}^{\frac{5}{2}})}^{h;\frac{\beta_0}{\varepsilon}}\\
            &\quad 
            =
            C
            \| (a,u) \|_{E_{\varepsilon}(t)},
        \end{align}
        we obtain \eqref{nonlin-2}.
        
        Finally, we prove \eqref{nonlin-3}.
        It follows from 
        \cite{Fu-22}*{Section 4},
        \eqref{pf:nonlin-1-2}, 
        \eqref{pf:nonlin-1-3}, 
        and
        \eqref{pf:nonlin-1-5}
        that
        \begin{align}
            &
            \varepsilon \sum_{j \in \mathbb Z} 
            2^{\frac{3}{q}j}
            \| [u \cdot \nabla, \Delta_j]a \|_{L^1( 0,t ; L^q)}
            +
            \varepsilon
            \| (\div u)a \|_{L^1( 0,t ; \dB_{q,1}^{\frac{3}{q}-1} )}\\
            &
            +
            \varepsilon
            \left\| \| \div u \|_{L^{\infty}}\| a \|_{\dB_{q,1}^{\frac{3}{q}}} \right\|_{L^1(0,t)}
            +
            \| ( \div(au),N_{\varepsilon}[a,u] ) \|_{L^1(0,t;\dB_{2,1}^{\frac{3}{q}-1})}\\
            &\quad
            \leqslant
            C 
            \varepsilon
            \| a \|_{L^{\infty}( 0,t ; \dB_{q,1}^{\frac{3}{q}})}
            \| u \|_{L^1( 0,t ; \dB_{q,1}^{\frac{3}{q}+1})}^{h;\frac{\beta_0}{\varepsilon}}
            +
            C\| (a,u) \|_{L^2(0,t ; \dB_{q,1}^\frac{3}{q})}^2\\
            &\quad 
            \leqslant
            {
            C
            \mathcal{A}^{q,r}_{\varepsilon,{\alpha}}[a,u](t)
            \| (a,u) \|_{A^{q,r}_{\varepsilon,{\alpha}}(t)}},
        \end{align}
        which completes the proof.
    \end{proof}
    
    Finally, we are now in a position to present the proof of Theorem \ref{thm}.
    \begin{proof}
        Let $\delta$ be a positive constant {to be} determined later.
        Since $(a_0,u_0) \in ( \dB_{2,1}^{\frac{1}{2}}(\mathbb R^3) \cap \dB_{2,1}^{\frac{3}{2}}(\mathbb R^3) ) \times \dB_{2,1}^{\frac{1}{2}}(\mathbb R^3)^3$, there exists a constant $\alpha_{\delta}=\alpha_{\delta}(a_0,u_0) \geqslant 1$ such that
        \begin{align}
            \| (a_0,u_0) \|_{\dB_{2,1}^{\frac{1}{2}}}^{h;\alpha_{\delta}}
            +
            \| a_0 \|_{\dB_{2,1}^{\frac{3}{2}}}^{h;\alpha_{\delta}}
            \leqslant
            \delta.
        \end{align}
        Let $\Omega \in \mathbb{R}$ and $0<\varepsilon<1$ satisfy 
        \begin{gather}
            |\Omega|\varepsilon \leqslant 1,\qquad
            0 < \varepsilon < \beta_0/\alpha_{\delta},
            \qquad
            \langle T \rangle\varepsilon \| (a_0,u_0) \|_{D_{\varepsilon}} \leqslant \delta,\\
            \langle \alpha_{\delta}^2T \rangle^{2} 
            \left( \varepsilon^{\frac{2}{3r}} + |\Omega|^{-\frac{1}{r}}\alpha^{\frac{2}{r}}_{\delta} \right)\| (a_0,u_0) \|_{\dB_{2,1}^{\frac{1}{2}}} \leqslant \delta,
            \quad
            \langle \alpha_{\delta}^2T \rangle^{2} 
            \left( \varepsilon^{\frac{2}{3r}} + |\Omega|^{-\frac{1}{r}}\alpha^{\frac{2}{r}}_{\delta} \right) \leqslant 1.
        \end{gather}
        Let $q$ and $r$ satisfy    
        \begin{align}
            2 < q < {4}, \qquad
            2 < r < \infty, \qquad
            \frac{2}{r} \leqslant \frac{3}{q} - \frac{1}{2},\qquad
            \frac{1}{q} + \frac{1}{r} \leqslant \frac{1}{2}.
        \end{align}
        {
        In Step.1 and Step.2 below, We fix $0 < t < T_{\Omega,\varepsilon}^{\rm max}$ and assume that the local solution $(a,u)$ satisfies
        \begin{align}\label{a-small}
            \varepsilon \| a \|_{L^{\infty}(0,t;L^{\infty}\cap \dB_{q,1}^{\frac{3}{q}})}
            \leqslant 
            \frac{1}{2}.
        \end{align}}
        
        \noindent 
        {\it Step.1 A priori estimates for the energy norm.}
        For the low frequency part, it follows from Corollary \ref{cor:low-ene} that
        \begin{align}\label{low-E-1}
            \begin{split}
            &
            \| (a,u) \|_{{L^{\infty}}(0,t;\dB_{2,1}^{\frac{1}{2}}) \cap L^1(0,t;\dB_{2,1}^{\frac{5}{2}})}^{\ell;\frac{\beta_0}{\varepsilon}}\\
            &\quad
            \leqslant{}
            C\langle T \rangle 
            \| (a_0,u_0) \|_{\dB_{2,1}^{\frac{1}{2}}}^{\ell;\frac{\beta_0}{\varepsilon}}
            +
            C\langle T \rangle 
            \|( \div(au),N_{\varepsilon}[a,u] )  \|_{L^1( 0,t ; \dB_{2,1}^{\frac{1}{2}} )}^{\ell;\frac{\beta_0}{\varepsilon}}\\
            &\quad 
            \leqslant{}
            C\langle T \rangle 
            \| (a_0,u_0) \|_{\dB_{2,1}^{\frac{1}{2}}}^{\ell;\frac{\beta_0}{\varepsilon}}
            +
            {
            C\langle T \rangle 
            \mathcal{A}^{q,r}_{\varepsilon,{\alpha_{\delta}}}[a,u](t)
            \| (a,u) \|_{E_{\varepsilon}(t)}}.
            \end{split}
        \end{align}
        For the high frequency analysis, it follows from Lemma \ref{lemm:high-ene} with $f=-\div(au)$ and $g=-N_{\varepsilon}[a,u]$ that
        \begin{align}\label{high-E-1}
            \begin{split}
            &
            \varepsilon \| a \|_{{L^{\infty}}( 0,t ; \dB_{2,1}^{\frac{3}{2}} )}^{h;\frac{\beta_0}{\varepsilon}}
            +
            \frac{1}{\varepsilon}\| a \|_{L^1( 0,t ; \dB_{2,1}^{\frac{3}{2}} )}^{h;\frac{\beta_0}{\varepsilon}}
            +
            \| u \|_{{L^{\infty}}( 0,t ; \dB_{2,1}^{\frac{1}{2}} ) \cap L^1( 0,t ; \dB_{2,1}^{\frac{5}{2}} )}^{h;\frac{\beta_0}{\varepsilon}}\\
            &
            \quad 
            \leqslant
            C
            \| (\varepsilon a_0,u_0) \|_{\dB_{2,1}^{\frac{3}{2}}}^{h;\frac{\beta_0}{\varepsilon}}\\
            &
            \qquad
            +
            C\varepsilon \sum_{j \in \mathbb Z} 
            {2^{\frac{3}{2}j}}
            \| [u \cdot \nabla, \Delta_j]a \|_{L^1( 0,t ; L^2)}
            +
            C\varepsilon
            \| (\div u)a \|_{L^1(0,t ; \dB_{2,1}^{\frac{3}{2}} )}
            \\
            &\qquad
            +
            C\varepsilon
            \left\| \| \div u \|_{L^{\infty}}\| a \|_{\dB_{2,1}^{\frac{3}{2}}} \right\|_{L^1(0,t)}
            +
            C\|( \div(au),N_{\varepsilon}[a,u] ) \|_{L^1(0,t;\dB_{2,1}^{\frac{1}{2}})}^{h;\frac{\beta_0}{\varepsilon}}\\
            &
            \quad 
            \leqslant
            C\| (\varepsilon a_0,u_0) \|_{\dB_{2,1}^{\frac{3}{2}} \times \dB_{2,1}^{\frac{1}{2}}}^{h;\frac{\beta_0}{\varepsilon}}
            +
            {
            C
            \mathcal{A}^{q,r}_{\varepsilon,{\alpha_{\delta}}}[a,u](t)
            \| (a,u) \|_{E_{\varepsilon}(t)}}.
            \end{split}
        \end{align}
        Hence, gathering \eqref{low-E-1} and \eqref{high-E-1}, we have
        \begin{align}\label{a-priori-E}
            \begin{split}
            \| (a,u) \|_{E_{\varepsilon}(t)}
            \leqslant{}&
            C_1\langle T \rangle
            \| (a_0,u_0) \|_{D_{\varepsilon}}
            +
            C_1\langle T \rangle
            {
            \mathcal{A}^{q,r}_{\varepsilon,{\alpha_{\delta}}}[a,u](t)
            \| (a,u) \|_{E_{\varepsilon}(t)}}
            \end{split}
        \end{align}
        for some positive constant $C_1=C_1(\mu,P,q,r)$.
        
        \noindent 
        {\it Step.2 A priori estimates for the auxiliary norm.}
        It follows from Proposition \ref{prop:lin-str-visc} that
        \begin{align}\label{low-A-1}
            \begin{split}
            \| (a,u) \|_{{L^r}( 0,t ; \dB_{q,1}^{\frac{3}{q}-1+\frac{2}{r}} )}^{\ell;\alpha_{\delta}}
            \leqslant{}&
            C
            \langle \alpha_{\delta}^2T \rangle^{2} 
            \left( \varepsilon^{\frac{2}{3r}} + |\Omega|^{-\frac{1}{r}} \right)
            \| (a_0,u_0) \|_{\dB_{q,1}^{\frac{1}{2}}}^{\ell;\alpha_{\delta}}\\
            &+
            C
            \langle \alpha_{\delta}^2T \rangle^{2} 
            \left( \varepsilon^{\frac{2}{3r}} + |\Omega|^{-\frac{1}{r}} \right)
            \|( \div(au),N_{\varepsilon}[a,u] ) \|_{L^1(0,t ; \dB_{2,1}^{\frac{1}{2}})}^{\ell;\alpha_{\delta}}\\
            \leqslant{}&
            C
            \delta+
            C
            \|( \div(au),N_{\varepsilon}[a,u] ) \|_{L^1(0,t ; \dB_{2,1}^{\frac{1}{2}})}^{\ell;\alpha_{\delta}}\\
            \leqslant{}&
            C\delta+
            {
            C
            \mathcal{A}^{q,r}_{\varepsilon,{\alpha_{\delta}}}[a,u](t)
            \| (a,u) \|_{A^{q,r}_{\varepsilon,{\alpha_{\delta}}}(t)}}.
            \end{split}
        \end{align}
        By the Bernstein inequality and Lemma \ref{lemm:middle-ene}, it holds 
        \begin{align}\label{low-A-2}
            \begin{split}
            \| (a,u) \|_{{L^{\infty}}(0,t;\dB_{q,1}^{\frac{3}{q}-1}) \cap L^1(0,t;\dB_{q,1}^{\frac{3}{q}+1})}^{m;\alpha_{\delta},\frac{\beta_0}{\varepsilon}}
            &
            \leqslant{}
            C
            \| (a,u) \|_{{L^{\infty}}(0,t;\dB_{2,1}^{\frac{1}{2}}) \cap L^1(0,t;\dB_{2,1}^{\frac{5}{2}})}^{m;\alpha_{\delta},\frac{\beta_0}{\varepsilon}}\\
            &
            {
            \leqslant{}
            C\| (a_0,u_0) \|_{\dB_{2,1}^{\frac{1}{2}}}^{m;\alpha_{\delta},\frac{\beta_0}{\varepsilon}}}\\
            &
            {
            \qquad
            +
            C
            \| (\div(au), N_{\varepsilon}[a,u] ) \|_{L^1(0,t;\dB_{2,1}^{\frac{1}{2}})}^{m;\alpha_{\delta},\frac{\beta_0}{\varepsilon}}}\\
            & 
            \leqslant{}
            C
            \delta
            +
            C
            \| ( \div(au),N_{\varepsilon}[a,u] ) \|_{L^1(0,t ; \dB_{2,1}^{\frac{1}{2}})}^{m;\alpha_{\delta},\frac{\beta_0}{\varepsilon}}\\
            &\leqslant{}
            C\delta+
            {
            C
            \mathcal{A}^{q,r}_{\varepsilon,{\alpha_{\delta}}}[a,u](t)
            \| (a,u) \|_{A^{q,r}_{\varepsilon,{\alpha_{\delta}}}(t)}}.
            \end{split}
        \end{align}
        From Lemma \ref{lemm:high-ene},
        there holds
        \begin{align}\label{high-A-1}
            \begin{split}
            &
            \varepsilon \| a \|_{{L^{\infty}}( 0,t ; \dB_{q,1}^{\frac{3}{q}} )}^{h;\frac{\beta_0}{\varepsilon}}
            +
            \frac{1}{\varepsilon}\| a \|_{L^1( 0,t ; \dB_{q,1}^{\frac{3}{q}} )}^{h;\frac{\beta_0}{\varepsilon}}
            +
            \| u \|_{{L^{\infty}}( 0,t ; \dB_{q,1}^{\frac{3}{q}-1} ) \cap L^1( 0,t ; \dB_{q,1}^{\frac{3}{q}+1} )}^{h;\frac{\beta_0}{\varepsilon}}\\
            &
            \quad 
            \leqslant
            C
            \| (\varepsilon a_0,u_0) \|_{\dB_{q,1}^{\frac{3}{q}} \times \dB_{q,1}^{\frac{3}{q}-1}}^{h;\frac{\beta_0}{\varepsilon}}
            \\
            &\qquad
            +
            C\varepsilon \sum_{j \in \mathbb Z} 
            {2^{\frac{3}{q}j}}
            \| [u \cdot \nabla, \Delta_j]a \|_{L^1( 0,t ; L^q)}
            +
            C\varepsilon
            \| (\div u)a \|_{L^1( 0,t ; \dB_{q,1}^{\frac{3}{q}} )}\\
            &\qquad
            +
            C\varepsilon
            \left\| \| \div u \|_{L^{\infty}}\| a \|_{\dB_{q,1}^{\frac{3}{q}}} \right\|_{L^1(0,t)}
            +
            C\| ( \div(au),N_{\varepsilon}[a,u] ) \|_{L^1(0,t;\dB_{q,1}^{\frac{3}{q}-1})}^{h;\frac{\beta_0}{\varepsilon}}\\
            &
            \quad 
            \leqslant
            C\delta 
            + 
            {
            C
            \mathcal{A}^{q,r}_{\varepsilon,{\alpha_{\delta}}}[a,u](t)
            \| (a,u) \|_{A^{q,r}_{\varepsilon,{\alpha_{\delta}}}(t)}}.
            \end{split}
        \end{align}
        Hence, combining {\eqref{low-A-1}, \eqref{low-A-2}, and} \eqref{high-A-1}, we obtain 
        \begin{align}\label{a-priori-A}
            \begin{split}
            \| (a,u) \|_{A_{\varepsilon,\alpha_{\delta}}^{q,r}(t)}
            \leqslant{}&
            C_2\delta 
            +
            C_2
            {
            \mathcal{A}^{q,r}_{\varepsilon,{\alpha_{\delta}}}[a,u](t)
            \| (a,u) \|_{A^{q,r}_{\varepsilon,{\alpha_{\delta}}}(t)}}
            \end{split}
        \end{align}
        for some positive constant $C_2=C_2(\mu,P,q,r)$.
        
        \noindent 
        {\it Step.3 The continuation argument.}
        Let $C_0:=\max\{C_1,C_2\}$. 
        Let $T_{\Omega,\varepsilon}^{\rm max}$ be the maximal existence time and let 
        \begin{align}
            T_{\Omega,\varepsilon}^*
            :=
            \sup
            \left\{
            t \in (0,T_{\Omega,\varepsilon}^{\rm max})
            \ ; \ 
            \begin{aligned}
                &
                \| (a,u) \|_{E_{\varepsilon}(t)}
                \leqslant
                3C_0
                \langle T \rangle
                \| (a_0,u_0) \|_{D_{\varepsilon}},\\
                &
                \| (a,u) \|_{A^{q,r}_{\varepsilon,\alpha_{\delta}}(t)}
                \leqslant
                3C_0
                \delta 
            \end{aligned}
            \right\}.
        \end{align}
        Note that Proposition \ref{prop:LWP} implies that $T_{\Omega,\varepsilon}^*>0$.
        Then, it suffices to show $T_{\Omega,\varepsilon}^* \geqslant T$.
        Supoose by contradiction that $T_{\Omega,\varepsilon}^*  < T$.
        {
        It follows from the embedding $\dB_{q,1}^{\frac{3}{q}}(\mathbb{R}^3) \hookrightarrow L^{\infty}(\mathbb{R}^3)$ and the Bernstein inequality that
        \begin{align}\label{a-small-2}
            \varepsilon
            \| a \|_{L^{\infty}(0,t; L^{\infty} \cap \dB_{q,1}^{\frac{3}{q}})}
            \leqslant
            C_4
            \mathcal{A}_{\varepsilon,\alpha_{\delta}}^{q,r}[a,u](t)
        \end{align}
        for all $0 < t < T_{\Omega,\varepsilon}^*$ with some positive constant $C_4=C_4(q,r)$.
        We now choose $\delta$ so small that 
        \begin{gather}
            \delta
            \leqslant
            \min \left\{
            \frac{1}{36C_0^3},\frac{1}{24C_0^2C_4}\right\},\label{small-1}\\
            \left( 
            \langle T \rangle
            \| (a_0,u_0) \|_{(\dB_{2,1}^{\frac{3}{2}} \cap \dB_{2,1}^{\frac{1}{2}}) \times \dB_{2,1}^{\frac{1}{2}}}
            \right)^{\frac{r-2}{r-1}}
            \delta^{\frac{1}{r-1}}
            \leqslant
            \min\left\{\frac{1}{18C_0^3},
            \frac{1}{12C_0^2C_4}
            \right\}.\label{small-2}
        \end{gather}
        Then, we see that for any $0<t<T_{\Omega,\varepsilon}^*$,
        \begin{align}\label{cal_A}
            \mathcal{A}_{\varepsilon,\alpha_{\delta}}^{q,r}[a,u](t)
            \leqslant{}&
            3C_0
            \left\{
            \varepsilon
            \alpha_{\delta}
            \langle T \rangle
            \| (a_0,u_0) \|_{D_{\varepsilon}}
            +\delta
            +
            (\langle T \rangle
            \| (a_0,u_0) \|_{D_{\varepsilon}})^{\frac{r-2}{r-1}}
            \delta^{\frac{1}{r-1}}
            \right\}\\
            \leqslant{}&
            6C_0^2
            \delta
            +
            3C_0^2
            \left( 
            \langle T \rangle
            \| (a_0,u_0) \|_{(\dB_{2,1}^{\frac{3}{2}} \cap \dB_{2,1}^{\frac{1}{2}}) \times \dB_{2,1}^{\frac{1}{2}}}
            \right)^{\frac{r-2}{r-1}}
            \delta^{\frac{1}{r-1}}\\
            \leqslant{}&
            \min\left\{
            \frac{1}{3C_0},
            \frac{1}{2C_4}
            \right\}.
        \end{align}
        Thus, we see by \eqref{a-priori-E}, \eqref{a-priori-A}, \eqref{cal_A}, and \eqref{a-small-2} that $(a,u)$  satisfies \eqref{a-small} and
        \begin{align}
            &
            \begin{aligned}
            \| (a,u) \|_{E_{\varepsilon}(t)}
            \leqslant{}&
            C_0
            \langle T \rangle
            \| (a_0,u_0) \|_{D_{\varepsilon}}
            + 
            C_0\langle T \rangle
            \mathcal{A}_{\varepsilon,\alpha_{\delta}}^{q,r}[a,u](t)
            \| (a,u) \|_{E_{\varepsilon}(t)}\\
            \leqslant{}&
            2C_0\langle T \rangle\| (a_0,u_0) \|_{D_{\varepsilon}},    
            \end{aligned}\\
            &
            \begin{aligned}
            \| (a,u) \|_{A_{\varepsilon,\alpha_{\delta}}^{q,r}(t)}
            \leqslant{}&
            C_0\delta 
            +
            C_0\mathcal{A}_{\varepsilon,\alpha_{\delta}}^{q,r}[a,u](t)\| (a,u) \|_{A_{\varepsilon,\alpha_{\delta}}^{q,r}(t)}\\
            \leqslant{}&
            2C_0\delta
            \end{aligned}
        \end{align}
        for all $0<t<T_{\Omega,\varepsilon}^*$,}
        which leads a contradiction to the definition of $T_{\Omega,\varepsilon}^*$.
        Hence, we obtain $T_{\Omega,\varepsilon}^* \geqslant T$ and complete the proof.
    \end{proof}

\noindent
{\bf Data availability}
Data sharing not applicable to this article as no datasets were generated or analysed during the current study.

\noindent
{\bf Conflict of interest statement.}\\
The author declares no conflicts of interest.

\noindent
{\bf Acknowledgements.} \\
The first author was partly supported by Grant-in-Aid for JSPS Research Fellow, Grant Number JP20J20941.
The second author was partly supported by JSPS KAKENHI, Grant Number {JP}21K13826.

\appendix
\def\thesection{\Alph{section}}
\section{Proof of Lemma \ref{lemm:stab}}\label{ses:app}
    \begin{proof}[Proof of Lemma \ref{lemm:stab}]
        Without loss of generality, we may assume that $x=0$.
        Indeed, once we obtain the estimate \eqref{osc:est} with $x=0$, we immediately obtain \eqref{osc:est} for all $x \in \mathbb{R}^d$ by {replacing} $p(\xi)$ {by} $p(\xi)+(x\cdot \xi)/\tau$.
        Here, we note that the constants $\varepsilon$ and $C$ in Lemma \ref{lemm:stab} are invariant under this replacement as it holds $\nabla^{\alpha}_{\xi} p(\xi)= \nabla^{\alpha}_{\xi}\{p(\xi)+(x\cdot \xi)/\tau\}$ for all $x$, $\xi$, $\tau$, and multi-index $\alpha$ with $|\alpha| \geqslant 2$.
        
        We first consider the non-degenerate case $k=d$.
        Using the partition of unity on the compact set $\supp \varphi$, we may assume that $\supp \varphi$ is a subset of some open ball with the radius $\delta$, which is a positive constant to be determined later.
        Then, we see that
        \begin{align}\label{J-0}
            \left|
            \int_{\mathbb{R}^d}
        		e^{i\tau q(\xi)}\varphi(\xi) d\xi
            \right|^2
            =
            \int_{|\eta|\leqslant 2\delta}
            J(\tau;\eta) d\eta
        \end{align}   
        {with}
        \begin{align}
            J(\tau;\eta)
            :=
            \int_{\mathbb{R}^d}
            e^{i\tau \{q(\xi+\eta)-q(\xi)\}}\widetilde{\varphi}(\xi,\eta)d\xi,
        \end{align}
        where we have set $\widetilde{\varphi}(\xi,\eta):=\varphi(\xi+\eta)\overline{\varphi(\xi)}$.
        Let $N \in \{0,1,2,...,d,d+1\}$.
        Then, from the $N$ times of the integration by parts
        and the direct computation via the chain rule, it follows that
        \begin{equation}\label{J-1}
        \begin{split}
            |J(\tau;\eta)|
            \leqslant{}
            &
            C_{N,d}|\tau|^{-N}\sum_{|\beta| \leqslant N}
            \int_{\mathbb{R}^d}
            |\nabla_{\xi}\{q(\xi+\eta)-q(\xi)\}|^{|\beta|-2N}\\
            &
            \times
            \sum_{1\leqslant|\gamma|\leqslant N+1}
            |\nabla_{\xi}^{\gamma}\{q(\xi+\eta)-q(\xi)\}|^{N-|\beta|}
            |\nabla_{\xi}\widetilde{\varphi}(\xi,\eta)|d\xi,
        \end{split}
        \end{equation}
        where the positive constant $C_{N,d}$ depends only on $N$ and $d$.
        By the mean value {theorem} and $\| q-p \|_{W^{d+3,\infty}(\supp\varphi)} \leqslant 1${, we have}
        \begin{align}\label{J-2}
        \begin{split}
            |\nabla_{\xi}^{\gamma}\{q(\xi+\eta)-q(\xi)\}|^{N-|\beta|}
            &\leqslant
            \| \nabla_{\xi}^{|\gamma|+1}q\|_{L^{\infty}(\supp\varphi)}|\eta|\\
            &\leqslant
            (1+\| \nabla_{\xi}^{|\gamma|+1}p\|_{L^{\infty}(\supp\varphi)})
            |\eta|.
        \end{split}
        \end{align}
        Since $k=d$, we see that $\det(\nabla^2p(\xi))\neq 0${, and hence}
        $\nabla^2 p(\xi)$ is invertible.
        Thus, we see that
        \begin{align}
            |\nabla^2 p(\xi)\eta|
            \geqslant
            |(\nabla^2 p(\xi))^{-1}|^{-1}|\eta|
            \geqslant
            \frac{\displaystyle \inf_{\xi \in \supp\varphi}|\det \nabla^2 p(\xi)|}{d^2(d-1)!\| \nabla^2 p\|_{L^{\infty}(\supp\varphi)}^{d-1}}|\eta|
            =:c|\eta|.
        \end{align}
        Then, it follows from the Taylor {theorem} that
        \begin{align*}
            &
            |\nabla_{\xi}\{q(\xi+\eta)-q(\xi)\}|\\
            &\quad=
            \left|
            \nabla_{\xi}^2q(\xi)\eta
            +
            \sum_{l,m=1}^d 
            \int_0^1(1-\theta)(\nabla\partial_{\xi_l}\partial_{\xi_m}q)(\xi+\theta\eta)\eta_l \eta_m d\theta
            \right|\\
            &\quad\geqslant
            |\nabla_{\xi}^2q(\xi)\eta|
            -\frac{d^2}{2}\| \nabla^3q\|_{L^{\infty}(\supp \varphi)}|\eta|^2\\
            &\quad\geqslant
            |\nabla_{\xi}^2p(\xi)\eta|
            -\|\nabla^2(q-p)\|_{L^{\infty}(\supp \varphi)}|\eta|
            -\frac{d^2}{2}(1+\| \nabla^3p\|_{L^{\infty}(\supp \varphi)})|\eta|^2\\
            &\quad\geqslant
            \left\{
            c
            -\varepsilon
            -d^2(1+\| \nabla^3p\|_{L^{\infty}(\supp \varphi)})\delta
            \right\}|\eta|.
        \end{align*}
        Therefore, taking $\varepsilon$ and $\delta$ so small that $\varepsilon \leqslant c/4$ and $d^2(1+\| \nabla^3p\|_{L^{\infty}(\supp \varphi)})\delta \leqslant c/4$,
        we have
        \begin{align}\label{J-3}
            |\nabla_{\xi}\{q(\xi+\eta)-q(\xi)\}|
            \geqslant \frac{c}{2}|\eta|.
        \end{align}
        Combining \eqref{J-0}, \eqref{J-1}, \eqref{J-2}{,} and \eqref{J-3},
        we obtain $|J(\tau,\eta)|\leqslant C_N|\tau \eta|^{-N}$.
        Choosing $N=0$ and $N=d+1$, we have
        \begin{align}
            \left|
            \int_{\mathbb{R}^d}
        		e^{i\tau q(\xi)}\varphi(\xi) d\xi
            \right|^2
            \leqslant
            C
            \int_{\mathbb{R}^d}
            \frac{1}{(1+|\tau \eta|)^{d+1}}d\eta
            \leqslant
            C|\tau|^{-d},
        \end{align}
        which completes {the proof for} the case $k=d$.
        
        Next, we consider the {degenarate} case $1\leqslant k\leqslant {d-1}$.
        By virture of the partition of unity on $\supp \varphi$, we may assume that $\supp \varphi$ is included in some open ball centered at $\xi_0 \in \supp \varphi$ {with} the radius $r$. Here, $r$ is a positive constant to be determined later.
        Let $k_0:=\rank \nabla^2p(\xi_0) \geqslant k$.
        It is easy to see that there {exist} $\lambda_1, ..., \lambda_{k_0} \in\mathbb{R} \setminus \{0\}$ and {an} orthogonal matrix $P$ such that $P^{\top}\nabla^2p(\xi_0)P=\operatorname{diag}\{\lambda_1,...,\lambda_{k_0},0,...,0\}$.
        Here, $\operatorname{diag}\{a_1,...,a_d\}$ denotes the diagonal matrix with its diagonal components $a_1,...,a_d$.
        Put $\eta_0:=P^{\top}\xi_0$ and $\widetilde{p}:=p\circ P$.
        Then, since
        $\nabla^2\widetilde{p}(\eta_0)=P^{\top}\nabla^2p(\xi_0)P$ and
        \begin{align}
            \det\{\partial_{\eta_m}\partial_{\eta_l}\widetilde{p}(\eta)\}_{1\leqslant m,l\leqslant k_0}|_{\eta=\eta_0}
            =
            \lambda_1\cdots\lambda_{k_0}
            \neq 0,
        \end{align}
        we may choose $r$ so that
        \begin{align}
            |\det\{\partial_{\eta_m}\partial_{\eta_l}\widetilde{p}(\eta)\}_{1\leqslant m,l\leqslant k_0}| \geqslant |\lambda_1\cdots\lambda_{k_0}|/2>0
        \end{align}
        for all $\eta$ with $|\eta-\eta_0|\leqslant r$ .
        Let $\varepsilon$ satisfy
        \begin{align}
            \varepsilon
            \leqslant
            \frac{|\lambda_1\cdots\lambda_{k_0}|}
            {8k_0^2(k_0-1)!\| \{\partial_{\eta_m}\partial_{\eta_l}\widetilde{p}(\eta)\}_{1\leqslant m,l\leqslant k_0}\|_{L^{\infty}(|\eta-\eta_0|\leqslant r)}^{k_0-1}}
        \end{align}
        and let $q$ satisfy $\| q-p\|_{W^{d+3,\infty}(\supp \varphi)}
            \leqslant C_*^{-1}\varepsilon$ where $C_*$ is a positive constant satisfying
        \begin{align}
            \sup_{|\eta''-\eta_0''|\leqslant r}
            \|\widetilde{q}(\eta',\eta'')-\widetilde{p}(\eta',\eta'')\|_{W^{k_0+3,\infty}_{\eta'}(|\eta'-\eta_0'|\leqslant r)}
            \leqslant C_*\| q-p\|_{W^{d+3,\infty}(\supp \varphi)}
        \end{align}
        {with $\widetilde{q}:=q \circ P$}.
        Here, we note that the constant $C_*$ is independent of $q$.
        Then, it follows from the result of {the non-degenerate case} {$(k = d)$} that
        \begin{align}
            \left|
            \int_{\mathbb{R}^d}
        	e^{i\tau q(\xi)}\varphi(\xi) d\xi
        	\right|
        	&=
        	\left|
        	\int_{\mathbb{R}^d}e^{i\tau \widetilde{q}(\eta)}\varphi(P\eta)d\eta
        	\right|\\
        	&\leqslant
        	\int_{\substack{\eta''\in \mathbb{R}^{d-k_0} \\|\eta''-\eta_0''|\leqslant r}}
        	\left|
        	\int_{\eta'\in\mathbb{R}^{k_0}}
        	e^{i\tau \widetilde{q}(\eta',\eta'')}\varphi(P(\eta',\eta''))d\eta'
        	\right|
        	d\eta''\\
        	&\leqslant
        	\int_{\substack{\eta''\in \mathbb{R}^{d-k_0} \\|\eta''-\eta_0''|\leqslant r}}
        	C(\eta'')(1+|\tau|)^{-\frac{k_0}{2}}
        	d\eta''\\
        	&\leqslant
        	C(1+|\tau|)^{-\frac{k_0}{2}}
        	\leqslant
        	C(1+|\tau|)^{-\frac{k}{2}},
        \end{align}
        where $C(\eta'')=C(\eta'',d,\varphi,p)$ is a constant continuously depending on $\eta''$.
        The proof is complete.
    \end{proof}

\begin{bibdiv}
\begin{biblist}
\bib{Ba-Ch-Da-11}{book}{
	author={Bahouri, Hajer},
	author={Chemin, Jean-Yves},
	author={Danchin, Rapha\"{e}l},
	title={Fourier analysis and nonlinear partial differential equations},
	publisher={Springer, Heidelberg},
	date={2011},
}
\bib{Ba-Ma-Ni-97}{article}{
	author={Babin, A.},
	author={Mahalov, A.},
	author={Nicolaenko, B.},
	title={Regularity and integrability of $3$D Euler and Navier{--}Stokes
		equations for rotating fluids},
	journal={Asymptot. Anal.},
	volume={15},
	date={1997},
	number={2},
	pages={103-150},
}
\bib{Ba-Ma-Ni-00}{article}{
	author={Babin, A.},
	author={Mahalov, A.},
	author={Nicolaenko, B.},
	title={Global regularity of 3D rotating Navier{--}Stokes equations for
		resonant domains},
	journal={Appl. Math. Lett.},
	volume={13},
	date={2000},
	number={4},
	pages={51-57},
}
\bib{Ba-Ma-Ni-01}{article}{
	author={Babin, A.},
	author={Mahalov, A.},
	author={Nicolaenko, B.},
	title={3D Navier{--}Stokes and Euler equations with initial data
		characterized by uniformly large vorticity},
	journal={Indiana Univ. Math. J.},
	volume={50},
	date={2001},
	number={Special Issue},
	pages={1-35},
}
\bib{Bo-Fa-Pr-22}{article}{
	author={Bocchi, Edoardo},
	author={Fanelli, Francesco},
	author={Prange, Christophe},
	title={Anisotropy and stratification effects in the dynamics of fast
		rotating compressible fluids},
	journal={Ann. Inst. H. Poincar\'{e} C Anal. Non Lin\'{e}aire},
	volume={39},
	date={2022},
	number={3},
	pages={647--704},
}
\bib{Ch-04}{article}{
	author={Charve, Fr\'{e}d\'{e}ric},
	title={Global well-posedness and asymptotics for a geophysical fluid
		system},
	journal={Comm. Partial Differential Equations},
	volume={29},
	date={2004},
	number={11-12},
	pages={1919--1940},
}
\bib{Ch-05}{article}{
	author={Charve, Fr\'{e}d\'{e}ric},
	title={Convergence of weak solutions for the primitive system of the
		quasigeostrophic equations},
	journal={Asymptot. Anal.},
	volume={42},
	date={2005},
	number={3-4},
	pages={173--209},
}
\bib{Ch-06}{article}{
	author={Charve, Fr\'{e}d\'{e}ric},
	title={Asymptotics and vortex patches for the quasigeostrophic
		approximation},
	journal={J. Math. Pures Appl. (9)},
	volume={85},
	date={2006},
	number={4},
	pages={493--539},
}
\bib{Ch-08}{article}{
	author={Charve, Fr\'{e}d\'{e}ric},
	title={Global well-posedness for the primitive equations with less
		regular initial data},
	journal={Ann. Fac. Sci. Toulouse Math. (6)},
	volume={17},
	date={2008},
	number={2},
	pages={221--238},
}
\bib{Ch-Da-10}{article}{
   author={Charve, Fr\'{e}d\'{e}ric},
   author={Danchin, Rapha\"{e}l},
   title={A global existence result for the compressible Navier{--}Stokes
   equations in the critical $L^p$ framework},
   journal={Arch. Ration. Mech. Anal.},
   volume={198},
   date={2010},
   number={1},
   pages={233--271},
}
\bib{Ch-De-Ga-Gr-02}{article}{
	author={Chemin, J.-Y.},
	author={Desjardins, B.},
	author={Gallagher, I.},
	author={Grenier, E.},
	title={Anisotropy and dispersion in rotating fluids},
	book={
		series={Stud. Math. Appl.},
		volume={31},
		publisher={North-Holland, Amsterdam},
	},
	date={2002},
	pages={171-192},
}
\bib{Ch-De-Ga-Gr-06}{book}{
	author={Chemin, J.-Y.},
	author={Desjardins, B.},
	author={Gallagher, I.},
	author={Grenier, E.},
	title={Mathematical geophysics},
	series={Oxford Lecture Series in Mathematics and its Applications},
	volume={32},
	publisher={The Clarendon Press, Oxford University Press, Oxford},
	date={2006},
}
\bib{Ch-Mi-Zh-10}{article}{
  author={Chen, Qionglei},
  author={Miao, Changxing},
  author={Zhang, Zhifei},
  title={Well-posedness in critical spaces for the compressible
  Navier{--}Stokes equations with density dependent viscosities},
  journal={Rev. Mat. Iberoam.},
  volume={26},
  date={2010},
  pages={915-946},
}
\bib{Ch-Mi-Zh-15}{article}{
   author={Chen, Qionglei},
   author={Miao, Changxing},
   author={Zhang, Zhifei},
   title={On the ill-posedness of the compressible Navier{--}Stokes equations
   in the critical Besov spaces},
   journal={Rev. Mat. Iberoam.},
   volume={31},
   date={2015},
   number={4},
   pages={1375--1402},
}
\bib{Ch-Zh-19}{article}{
	author={Chen, Zhi-Min},
	author={Zhai, Xiaoping},
	title={Global large solutions and incompressible limit for the
		compressible Navier{--}Stokes equations},
	journal={J. Math. Fluid Mech.},
	volume={21},
	date={2019},
	number={2},
	pages={Paper No. 26, 23},
}
\bib{Cu-Ro-Be-11}{book}{
	author={Cushman-Roisin, Benoit},
	author={Beckers, Jean-Marie},
	title={Introduction to geophysical fluid dynamics: physical and numerical aspects},
	publisher={Academic press},
	date={2011},
}
\bib{Da-00}{article}{
	author={Danchin, Rapha\"{e}l},
	title={Global existence in critical spaces for compressible Navier{--}Stokes
		equations},
	journal={Invent. Math.},
	volume={141},
	date={2000},
	number={3},
	pages={579--614},
}
\bib{Da-01-L}{article}{
   author={Danchin, Rapha\"{e}l},
   title={Local theory in critical spaces for compressible viscous and
   heat-conductive gases},
   journal={Comm. Partial Differential Equations},
   volume={26},
   date={2001},
   pages={1183--1233},
}
\bib{Da-01-G}{article}{
   author={Danchin, Rapha\"{e}l},
   title={Global existence in critical spaces for flows of compressible
   viscous and heat-conductive gases},
   journal={Arch. Ration. Mech. Anal.},
   volume={160},
   date={2001},
   pages={1--39},
}
\bib{Da-02-T}{article}{
   author={Danchin, Rapha\"{e}l},
   title={Zero Mach number limit for compressible flows with periodic
   boundary conditions},
   journal={Amer. J. Math.},
   volume={124},
   date={2002},
   pages={1153--1219},
}
\bib{Da-02-R}{article}{
   author={Danchin, Rapha\"{e}l},
   title={Zero Mach number limit in critical spaces for compressible
   Navier{--}Stokes equations},
   journal={Ann. Sci. \'{E}cole Norm. Sup. (4)},
   volume={35},
   date={2002},
   pages={27--75},
}
\bib{Da-05}{article}{
   author={Danchin, Rapha\"{e}l},
   title={Low Mach number limit for viscous compressible flows},
   journal={M2AN Math. Model. Numer. Anal.},
   volume={39},
   date={2005},
   pages={459--475},
}
\bib{Da-14}{article}{
   author={Danchin, Rapha\"{e}l},
   title={A Lagrangian approach for the compressible Navier{--}Stokes
   equations},
   journal={Ann. Inst. Fourier (Grenoble)},
   volume={64},
   date={2014},
   pages={753--791},
}
\bib{Da-He-16}{article}{
   author={Danchin, Rapha\"{e}l},
   author={He, Lingbing},
   title={The incompressible limit in $L^p$ type critical spaces},
   journal={Math. Ann.},
   volume={366},
   date={2016},
   pages={1365--1402},
}
\bib{Da-Mu-17}{article}{
	author={Danchin, Rapha\"{e}l},
	author={Mucha, Piotr Bogus\l aw},
	title={Compressible Navier{--}Stokes system: large solutions and
		incompressible limit},
	journal={Adv. Math.},
	volume={320},
	date={2017},
	pages={904--925},
}
\bib{de-Fe-Li-17}{article}{
	author={de Almeida, Marcelo F.},
	author={Ferreira, Lucas C. F.},
	author={Lima, Lidiane S. M.},
	title={Uniform global well-posedness of the Navier{--}Stokes{--}Coriolis system
		in a new critical space},
	journal={Math. Z.},
	volume={287},
	date={2017},
	number={3-4},
	pages={735-750},
}
\bib{Du-05}{article}{
	author={Dutrifoy, Alexandre},
	title={Examples of dispersive effects in non-viscous rotating fluids},
	journal={J. Math. Pures Appl. (9)},
	volume={84},
	date={2005},
	number={3},
	pages={331--356},
}
\bib{Fa-21}{article}{
	author={Fanelli, Francesco},
	title={Incompressible and fast rotation limit for barotropic
		Navier{--}Stokes equations at large Mach numbers},
	journal={Phys. D},
	volume={428},
	date={2021},
	pages={Paper No. 133049, 20},
}
\bib{Fa-Ha-Hi-15}{article}{
	author={Fang, Daoyuan},
	author={Han, Bin},
	author={Hieber, Matthias},
	title={Local and global existence results for the Navier{--}Stokes equations
		in the rotational framework},
	journal={Commun. Pure Appl. Anal.},
	volume={14},
	date={2015},
	number={2},
	pages={609-622},
}
\bib{Fe-Ga-Ge-No-12}{article}{
	author={Feireisl, Eduard},
	author={Gallagher, Isabelle},
	author={Gerard-Varet, David},
	author={Novotn\'{y}, Anton\'{\i}n},
	title={Multi-scale analysis of compressible viscous and rotating fluids},
	journal={Comm. Math. Phys.},
	volume={314},
	date={2012},
	number={3},
	pages={641-670},
}
\bib{Fe-Ga-No-12}{article}{
	author={Feireisl, Eduard},
	author={Gallagher, Isabelle},
	author={Novotn\'{y}, Anton\'{\i}n},
	title={A singular limit for compressible rotating fluids},
	journal={SIAM J. Math. Anal.},
	volume={44},
	date={2012},
	number={1},
	pages={192-205},
}
\bib{Fe-No-14}{article}{
	author={Feireisl, Eduard},
	author={Novotn\'{y}, Anton\'{\i}n},
	title={Multiple scales and singular limits for compressible rotating
		fluids with general initial data},
	journal={Comm. Partial Differential Equations},
	volume={39},
	date={2014},
	number={6},
	pages={1104-1127},
}
\bib{Fu-22}{article}{
	author={Fujii, Mikihiro},
	title={Low Mach number limit of the global solution to the compressible
		Navier--Stokes system for large data in the critical Besov space},
	journal={Math. Ann.},
	volume={388},
	date={2024},
	number={4},
	pages={4083--4134},
}
\bib{Fu-Ka-64}{article}{
   author={Fujita, Hiroshi},
   author={Kato, Tosio},
   title={On the Navier{--}Stokes initial value problem. I},
   journal={Arch. Ration. Mech. Anal.},
   volume={16},
   date={1964},
   pages={269--315},
}
\bib{Gi-In-Ma-Sa-08}{article}{
	author={Giga, Yoshikazu},
	author={Inui, Katsuya},
	author={Mahalov, Alex},
	author={Saal, J\"{u}rgen},
	title={Uniform global solvability of the rotating Navier{--}Stokes equations
		for nondecaying initial data},
	journal={Indiana Univ. Math. J.},
	volume={57},
	date={2008},
	number={6},
	pages={2775-2791},
}
\bib{Ha-Lo-98}{article}{
	author={Hagstrom, Thomas},
	author={Lorenz, Jens},
	title={All-time existence of classical solutions for slightly
		compressible flows},
	journal={SIAM J. Math. Anal.},
	volume={29},
	date={1998},
	number={3},
	pages={652--672},
}
\bib{Ha-11}{article}{
   author={Haspot, Boris},
   title={Existence of global strong solutions in critical spaces for
   barotropic viscous fluids},
   journal={Arch. Ration. Mech. Anal.},
   volume={202},
   date={2011},
   number={2},
   pages={427--460},
}
\bib{Hi-Sh-10}{article}{
	author={Hieber, Matthias},
	author={Shibata, Yoshihiro},
	title={The Fujita-Kato approach to the Navier{--}Stokes equations in the
		rotational framework},
	journal={Math. Z.},
	volume={265},
	date={2010},
	number={2},
	pages={481-491},
}
\bib{Iw-Ma-Ta-16}{article}{
   author={Iwabuchi, Tsukasa},
   author={Mahalov, Alex},
   author={Takada, Ryo},
   title={Stability of time periodic solutions for the rotating
   Navier{--}Stokes equations},
   book={
      series={Adv. Math. Fluid Mech.},
      publisher={Birkh\"{a}user/Springer, Basel},
   },
   date={2016},
   pages={321--335},
}
\bib{Iw-Ta-12}{article}{
	author={Iwabuchi, Tsukasa},
	author={Takada, Ryo},
	title={Time periodic solutions to the Navier{--}Stokes equations in the
		rotational framework},
	journal={J. Evol. Equ.},
	volume={12},
	date={2012},
	number={4},
	pages={985-1000},
}
\bib{Iw-Ta-13}{article}{
	author={Iwabuchi, Tsukasa},
	author={Takada, Ryo},
	title={Global solutions for the Navier{--}Stokes equations in the rotational
		framework},
	journal={Math. Ann.},
	volume={357},
	date={2013},
	number={2},
	pages={727-741},
}
\bib{Iw-Ta-14}{article}{
	author={Iwabuchi, Tsukasa},
	author={Takada, Ryo},
	title={Global well-posedness and ill-posedness for the Navier{--}Stokes
		equations with the Coriolis force in function spaces of Besov type},
	journal={J. Funct. Anal.},
	volume={267},
	date={2014},
	number={5},
	pages={1321-1337},
}
\bib{Iw-Ta-15}{article}{
	author={Iwabuchi, Tsukasa},
	author={Takada, Ryo},
	title={Dispersive effect of the Coriolis force and the local
		well-posedness for the Navier{--}Stokes equations in the rotational
		framework},
	journal={Funkcial. Ekvac.},
	volume={58},
	date={2015},
	number={3},
	pages={365-385},
}
\bib{Iw-Og-22}{article}{
	author={Iwabuchi, Tsukasa},
	author={Ogawa, Takayoshi},
	title={Ill-posedness for the compressible Navier–Stokes equations 
	    under barotropic condition in limiting Besov spaces},
    journal={J. Math. Soc. Japan},
    volume={74},
    date={2022},
    number={2},
    pages={353--394}
}
\bib{Ke-Ta-98}{article}{
   author={Keel, Markus},
   author={Tao, Terence},
   title={Endpoint Strichartz estimates},
   journal={Amer. J. Math.},
   volume={120},
   date={1998},
   number={5},
   pages={955--980},
}
\bib{Ko-Le-Ta-14-1}{article}{
	author={Koh, Youngwoo},
	author={Lee, Sanghyuk},
	author={Takada, Ryo},
	title={Dispersive estimates for the Navier{--}Stokes equations in the
		rotational framework},
	journal={Adv. Differential Equations},
	volume={19},
	date={2014},
	number={9-10},
	pages={857-878},
}
\bib{Ko-Le-Ta-14-2}{article}{
   author={Koh, Youngwoo},
   author={Lee, Sanghyuk},
   author={Takada, Ryo},
   title={Strichartz estimates for the Euler equations in the rotational
   framework},
   journal={J. Differential Equations},
   volume={256},
   date={2014},
   pages={707--744},
}
\bib{Ko-Yo-11}{article}{
	author={Konieczny, Pawe\l },
	author={Yoneda, Tsuyoshi},
	title={On dispersive effect of the Coriolis force for the stationary
		Navier{--}Stokes equations},
	journal={J. Differential Equations},
	volume={250},
	date={2011},
	number={10},
	pages={3859-3873},
}
\bib{Ko-Ma-Ta-14}{article}{
	author={Kozono, Hideo},
	author={Mashiko, Yuki},
	author={Takada, Ryo},
	title={Existence of periodic solutions and their asymptotic stability to
		the Navier{--}Stokes equations with the Coriolis force},
	journal={J. Evol. Equ.},
	volume={14},
	date={2014},
	number={3},
	pages={565-601},
}
\bib{Le-Ta-17}{article}{
   author={Lee, Sanghyuk},
   author={Takada, Ryo},
   title={Dispersive estimates for the stably stratified Boussinesq
   equations},
   journal={Indiana Univ. Math. J.},
   volume={66},
   date={2017},
   number={6},
   pages={2037--2070},
}
\bib{Ng-Sc-18}{article}{
	author={Ngo, Van-Sang},
	author={Scrobogna, Stefano},
	title={Dispersive effects of weakly compressible and fast rotating
		inviscid fluids},
	journal={Discrete Contin. Dyn. Syst.},
	volume={38},
	date={2018},
	number={2},
	pages={749-789},
}
\bib{OT}{article}{
	author={Ohyama, Hiroki},
	author={Takada, Ryo},
	title={Asymptotic limit of fast rotation for the incompressible
		Navier{--}Stokes equations in a 3D layer},
	journal={J. Evol. Equ.},
	volume={21},
	date={2021},
	number={2},
	pages={2591--2629},
}
\bib{St-Sh-11}{book}{
   author={Stein, Elias M.},
   author={Shakarchi, Rami},
   title={Functional analysis},
   series={Princeton Lectures in Analysis},
   publisher={Princeton University Press, Princeton, NJ},
   date={2011},
}
\bib{Su-Ya-Cu-17}{article}{
	author={Sun, Jinyi},
	author={Yang, Minghua},
	author={Cui, Shangbin},
	title={Existence and analyticity of mild solutions for the 3D rotating
		Navier{--}Stokes equations},
	journal={Ann. Mat. Pura Appl. (4)},
	volume={196},
	date={2017},
	number={4},
	pages={1203-1229},
}
\bib{Zh-Wa-17}{article}{
	author={Zhao, Hengjun},
	author={Wang, Yinxia},
	title={A remark on the Navier{--}Stokes equations with the Coriolis force},
	journal={Math. Methods Appl. Sci.},
	volume={40},
	date={2017},
	number={18},
	pages={7323-7332},
}

\end{biblist}
\end{bibdiv}
\end{document}